\newtheorem{Thm}{Theorem}[section]
\newtheorem{Prop}[Thm]{Proposition}
\newtheorem{Cor}[Thm]{Corollary}
\newtheorem{Lem}[Thm]{Lemma}
\newtheorem*{definition}{Definition}
\newenvironment{remark}{\it Remark. \rm}
\newcommand{\Q}[0]{\mathbb{Q}}
\newcommand{\F}[0]{\mathbb{F}}
\newcommand{\Z}[0]{\mathbb{Z}}
\newcommand{\N}[0]{\mathbb{N}}
\renewcommand{\O}[0]{\mathcal{O}}
\newcommand{\p}[0]{\mathfrak{p}}
\newcommand{\m}[0]{\mathrm{m}}
\newcommand{\Tr}{\mathrm{Tr}}
\newcommand{\Hom}[0]{\mathrm{Hom}}
\newcommand{\Gal}[0]{\mathrm{Gal}}
\newcommand{\Res}[0]{\mathrm{Res}}
\newcommand{\id}{\mathrm{id}}
\newcommand{\To}[0]{\longrightarrow}
\newcommand{\cl}{\mathrm{cl}}
\newcommand{\Li}{\mathrm{Li}}
\newcommand{\wt}[0]{\widetilde}
\newcommand{\wb}[0]{\overline}
\renewcommand{\c}[0]{\mathcal}
\renewcommand{\d}[0]{\text{\tiny{$\bullet$}}}
\begin{document}

\title[Field-of-Norms Functor and Hilbert Symbol]
{The Field-of-Norms Functor and the Hilbert Symbol for Higher Local Fields}
\author{Victor Abrashkin}
\address{Department of Mathematical Sciences, Durham University, Science Laboratories, 
South Rd, Durham DH1 3LE, United Kingdom}
\email{victor.abrashkin@durham.ac.uk}
\author{Ruth Jenni}
\email{r.c.jenni@durham.ac.uk}
\date{}
\keywords{higher local fields, field-of-norms, Hilbert Symbol, Vostokov's pairing}
\subjclass[2010]{11S20, 11S31, 11S70}

\begin{abstract} The field-of-norms functor is applied 
to deduce explicit reciprocity formulae for the Hilbert symbol   
in the mixed characteristic case from the explicit formula 
for the Witt symbol 
in characteristic $p>2$ in the context of higher local fields. 
Is is shown that a ``very special case'' of this 
construction gives Vostokov's explicit formula. 
\end{abstract}
\maketitle


\section*{Introduction}

Throughout this paper, $M$ and $N$ are fixed natural numbers, 
$p$ is an odd prime number, 
$W(k)$ is the ring of Witt 
vectors with coefficients in a finite field $k$ of characteristic $p$, 
$W(k)_{\Q_p}=W(k)\otimes_{\Z_p}\Q_p$, and $\sigma $ is the 
Frobenius automorphism of 
$W(k)$ induced by the $p$-th power map on $k$. 
In the main body of the paper we shall also use  
other notation from this Introduction without special reference.

Suppose $F$ is an $N$-dimensional local field of characteristic 0 
with the (first) residue field $F^{(1)}$  
(which is an $(N-1)$-dimensional local field) of characteristic $p$, 
$\bar F$ is a fixed algebraic closure of $F$ and 
$\Gamma _F=\Gal (\bar F/F)$. Note that, by definition, the last 
residue field $F^{(N)}$ is a finite field of 
characteristic $p$ which we shall denote by $k$. 
Fix a system of local parameters $\pi _1,\dots ,\pi _N$ in $F$. 
Let $v_F$ be the (first) valuation of $F$ such that $v_F(F^*)=\Z $. 
Then $v_F$ can be extended uniquely to $\bar F$ and we introduce 
for any $c\geqslant 0$, the ideals $\p _F^c=\{a\in\bar F\ |\ v_F(a)\geqslant c\}$. 

Let $F_{\d }$ be a strictly deeply ramified (SDR) fields
tower with parameters $(0,c)$, where 
$0< c\leqslant v_F(p)$. This means that 
$F_{\d }=\{F_n\ | n\geqslant 0\}$ is an increasing tower 
of algebraic extensions of $F_0=F$ such that for all $n\geqslant 0$, 

--- the last residue field of $F_n$ is $k$;

--- there is a system of local parameters  
$\pi _1^{(n)},\dots ,\pi _N^{(n)}$ in $F_n$ 
such that $\pi _1^{(n+1)p}\equiv \pi _1^{(n)}\,\mathrm{mod}\,\p_F^c$,\dots , 
$\pi _N^{(n+1)p}\equiv\pi _N^{(n)}\,\mathrm{mod}\,\p _F^c$.

The construction of the field-of-norms functor $X$ 
from \cite{Sch} attaches to $F_{\d }$ a  
field $X(F_{\d })=\c F$ of 
characteristic $p$. This field is  
the fraction field of the valuation ring 
$O_{\c F}=\underset{n}
\varprojlim \,O_{F_n}/\p _F^c$, where  
$O_{F_n}=\{a\in F_n\ |\ v_F(a)\geqslant 0\}$ 
are the (first) valuation rings of $F_n$ for all 
$n\geqslant 0$. Note that $\c F$ has a natural 
structure of an $N$-dimensional local field of 
characteristic $p$ with system of local parameters 
$\bar t_1=\underset{n}\varprojlim\pi _1^{(n)}$,\dots , 
$\bar t_N=\underset{n}\varprojlim\pi _N^{(n)}$ and 
last residue field $\c F^{(N)}=k$, i.e. 
$\c F$ is the field of formal Laurent series $k((\bar t_N))\dots ((\bar t_1))$.
 In addition,  
the field-of-norms functor $X$ 
provides us with a construction of a 
separable closure $\c F_{sep}$  of $\c F$ and 
identifies the Galois groups $\Gamma _{\c F}=
\Gal (\c F_{sep}/\c F)$ and 
$\Gamma _{F_{\infty }}=\Gal (\bar F/F_{\infty })$, 
where $F_{\infty }=\bigcup _{n\geqslant 0}F_n$.

We use the above system of local parameters 
$\bar t_1,\dots ,\bar t_N$ 
to construct an absolutely unramified lift 
$L(\c F)$ of $\c F$ of characteristic 0. Then $L(\c F)$ is 
an $(N+1)$-dimensional local field with system of 
local parameters $p,t_1,\dots ,t_N$;  
its first residue field $L(\c F)^{(1)}$ coincides 
with $\c F$ and for $1\leqslant i\leqslant N$, we have  
$t_i\,\mathrm{mod}\,p=\bar t_i$.

For any higher local field $L$, let $K_N(L)$ be its $N$-th 
Milnor $K$-group. In this paper we mainly use the topological 
versions  $K_N^t(L)$ of the Milnor $K$-groups, which have explicit  
systems of topological generators. Nevertheless, in the 
final statement we can return to Milnor $K$-groups 
due to the natural identification 
$K_N(L)/p^M=K_N^t(L)/p^M$, which we shall denote by $K_N(L)_M$.

The following maps play very important roles  
in the statement of the main result of this paper.

$\bullet $\ $\c N_{\c F/F}:K_N^t(\c F)\longrightarrow K_N^t(F)$.
\newline 
In Subsection \ref{S4} we prove that for so-called special 
SDR towers $F_{\d }$, there is a natural 
identification 
$K_N^t(\c F)=\underset{n}\varprojlim K_N^t(F_n)$, 
where the connecting morphisms are the norm maps 
$N_{F_{n+1}/F_n}:K_N^t(F_{n+1})\longrightarrow K_N^t(F_n)$. 
Then $\c N_{\c F/F}$ is the 
corresponding projection from $K^t_N(\c F)$ to $K^t_N(F)$. 
For arbitrary SDR towers $F_{\d }$ 
we prove the analogous ``modulo $p^M$'' statement under the assumption 
that a primitive $p^M$-th root of unity $\zeta _M\in F_{\infty }$. 
In particular, this gives the map 
$$\c N_{\c F/F}:K_N(\c F)_M\longrightarrow K_N(F)_M.$$ 

$\bullet $\ $\mathrm{Col} : 
K_N^t(\c F)\longrightarrow K_N^t(L(\c F))$. 
\newline 
This map is obtained as a section of the natural map 
from $K^t_N(L(\c F))$ to 
$K^t_N(L(\c F)^{(1)})=K_N^t(\c F)$. Its construction, in which 
the concept of topological $K$-groups is essential, 
is a direct generalisation 
of Fontaine's 1-dimensional construction from \cite{Fo}.  

$\bullet $\ $\theta ^1:
\m ^0\longrightarrow 
(1+\m ^0)^{\times }$. 
\newline 
Here $\m ^0$ consists of all series 
$\sum _{a>\bar 0}w_a\underline{t}^a$, which 
are 
convergent in $L(\c F)$, where 
the indices $a=(a_1,\dots ,a_N)\in\Z ^N$ are 
provided with the lexicographical ordering, 
all $w_a\in W(k)$, and $\underline{t}^a:=t_1^{a_1}\dots t_N^{a_N}$.
The map $\theta ^1$ is then a group homomorphism 
defined by the correspondence 
$$\sum _{a>\bar 0}w_a\underline{t}^a\mapsto 
\prod _{a>\bar 0}E(w_a,\underline{t}^a).$$
Here for any $w\in W(k)$, 
$$E(w,X)=\exp \left (wX+\dots +
\sigma ^n(w)X^{p^n}/p^n+\dots \right )\in W(k)[[X]]$$
is the Shafarevich generalisation of the Artin-Hasse exponential. 
Notice that the inverse of $\theta ^1$  
is the map given, for any $b\in 1+\m ^0$, 
by the correspondence 
$b\mapsto (1/p)\log(b^p/\sigma (b))$, where 
$\sigma $ is the 
continuous map induced by the Frobenius on $W(k)$ 
and $t_i\mapsto t_i^p$, for all 
$1\leqslant i\leqslant N$. 

$\bullet $\ $\gamma : (1+\m ^0)^{\times }
\longrightarrow \hat F^*_{\infty }$.
\newline 
Here $\hat F_{\infty }$ is the completion 
(with respect to the valuation $v_F$) of 
$F_{\infty }=\bigcup _{n\geqslant 0}F_n$ and 
the map $\gamma $ is the continuous map uniquely determined 
by  
$t_i\mapsto\underset{n\to\infty }\lim \pi _i^{(n)p^n}$, 
$1\leqslant i\leqslant N$.

We now state the main result of this paper. 

Let $F^{ab}$ be the maximal abelian extension of $F$, 
$\Gamma _F^{ab}=\Gal (F^{ab}/F)$ and 
$\widehat{K}_N^t(F):=\underset{L}
\varprojlim K_N^t(F)/N_{L/F}K_N^t(L)$, where 
$L$ runs over the set of all finite extensions of $F$ in  $F^{ab}$. 

We denote by $\Theta _F:
\Gamma _F^{ab}\longrightarrow\widehat{K}_N^t(F)$ the reciprocity 
map of local higher class field theory. For the field $\c F$, we 
introduce similarly $\c F^{ab}$, $\Gamma _{\c F}^{ab}$, 
$\widehat{K}_N^t(\c F)$ and $\Theta _{\c F}$. Then the 
compatibility of class field theories for the fields $F$ and $\c F$ 
via the field-of-norms functor means that there is 
the following commutative diagram 
\begin{equation}\label{E01}\xymatrix{\Gamma _{\c F}^{ab}
\ar[d]^{\iota _{\c F/F}}\ar[rr]^{\Theta _{\c F}}&& 
\widehat{K}^t_N(\c F)\ar[d]^{\widehat{\c N}_{\c F/F}}&&\ar[ll] K_N^t(\c F)
\ar[d]^{\c N_{\c F/F}}\\
\Gamma (F)^{ab}\ar[rr]^{\Theta _F}
&& \widehat{K}_N^t(F) &&\ar[ll]  K_N^t(F)
}
\end{equation}
Here $\iota _{\c F/F}$ is induced by 
the identification $\Gamma _{\c F}=\Gamma _{F_{\infty }}$ given 
by the field-of-norms functor, the horizontal maps on the 
right-hand side are the natural 
embeddings and the map $\widehat{\c N}_{\c F/F}$ is induced by $\c N_{\c F/F}$ 
on the corresponding completions. 
We prove the commutativity of the above diagram \eqref{E01} 
only for $\c F=X(F_{\d})$, where 
$F_{\d}$ is a so-called special SDR tower, cf. 
Subsection \ref{S4.3}. But under the additional 
assumption $\zeta _M\in F_{\infty }$ we prove the commutativity of 
the following ``modulo $p^M$'' version of \eqref{E01} 
for any SDR tower $F_{\d}$ 
(we use the same notation for all involved maps taken modulo $p^M$)
$$\xymatrix{\Gamma _{\c F}^{ab}/p^M
\ar[d]^{\iota _{\c F/F}}\ar[rr]^{\Theta _{\c F}}&& 
\widehat{K}_N(\c F)_M\ar[d]^{\widehat{\c N}_{\c F/F}}
&&\ar[ll] K_N^t(\c F)_M
\ar[d]^{\c N_{\c F/F}}\\
\Gamma _F^{ab}/p^M\ar[rr]^{\Theta _F}
&& \widehat{K}_N(F)_M &&\ar[ll]  K_N(F)_M
}$$

This property allows us to consider the $M$-th Hilbert pairing 
$$(\ ,\ )_M^{F_{\d }}: \hat F^*\times 
\c N_{\c F/F}(K_N^t(\c F))\longrightarrow \langle\zeta _M\rangle $$ 
under the condition that $\zeta _M\in F_{\infty }$. 
Namely, if $b\in \c N_{\c F/F}(K_N(\c F)_M)$ then 
there is $\tau\in\Gamma _{F}^{ab}/p^M$ such that 
$\tau |_{F_{\infty }}=\id $ and $\Theta _{F}(\tau )=b $. Then 
for any $a\in\hat F^*_{\infty }$, $(a,b)^{F_{\d }}_M:=\tau (\xi )/\xi $, 
where $\xi\in \bar F$ is such that $\xi ^{p^M}=a$.

Suppose $F_{\d }$ is an SDR tower with parameters $(0,c)$. 

\begin{definition} The tower $F_{\d }$ is called $\omega $-admissible,  
for $\omega\in\Z _{\geqslant 0}$, if 
$cp^{\omega }>2v_F(p)/(p-1)$ and if 
$F_{\omega }$ contains a primitive 
$p^{M+\omega }$-th root of unity 
$\zeta _{M+\omega }$. 
\end{definition}

For an $\omega $-admissible SDR tower $F_{\d }$, 
we define (not uniquely) an element 
$H_{\omega }=H_{\omega }(\zeta _{M+\omega })
\in \m ^0$ as follows. 
Suppose $H'=1+\sum _{a>\bar 0}
w_a\underline{t}^a\in 1+\m ^0$ is 
such that $\gamma (H')
\equiv\zeta _{M+\omega }\,\mathrm{mod}\,\p _F^c$. 
Then we set $H_{\omega }:=H^{\prime p^{M+\omega }}-1$. Note 
that the construction 
of $H_{\omega }$ does not require the knowledge of the whole 
tower $F_{\d }$, but only of the field $F_{\omega}$. 
In particular, if $\zeta _M\in F=F_0$ then the corresponding element 
$H_0\in\m ^0$ will be used later in the definition of Vostokov's pairing. 

With the above notaion we have,  
for any $\omega $-admissible SDR tower, the following 
explicit formula for the $M$-th Hilbert symbol.

\begin{Thm} \label{T0.2}If $f\in\m ^0$, 
$\beta\in K_N(\c F)$ and $\theta :=\gamma\circ \theta _1 $ then 
 \begin{equation}\label{E0.1a}(\,\theta (f)\,,\,\c N_{\c F/F}(\beta )\,)_M^{F_{\d}}=
\zeta ^{p^{\omega }A}_{M+\omega }
\end{equation}
where $A=(\Tr \circ \Res)\left ((f/H_{\omega })
d_{\log}\mathrm{Col}(\beta )\right ).$
\end{Thm}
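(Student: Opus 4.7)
The plan is to transport the computation of the $M$-th Hilbert symbol in the mixed-characteristic field $F$ to a Witt-symbol computation in the characteristic-$p$ field $\c F$ via the field-of-norms functor, and then apply an explicit trace-residue formula for the Witt symbol in characteristic $p$ (which is the characteristic-$p$ input that this paper reduces to).

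First, I would invoke the modulo $p^M$ commutative diagram of reciprocity maps established above. For $\beta\in K_N(\c F)_M$, the element $\tau\in\Gamma_F^{ab}/p^M$ characterized by $\tau|_{F_{\infty}}=\id$ and $\Theta_F(\tau)=\c N_{\c F/F}(\beta)$ corresponds, via $\iota_{\c F/F}$, to some $\tilde\tau\in\Gamma_{\c F}^{ab}/p^M$ with $\Theta_{\c F}(\tilde\tau)=\beta$. Thus the Hilbert-symbol value $\tau(\xi)/\xi$, where $\xi^{p^M}=\theta(f)$ in $\bar F$, is determined by how $\tilde\tau$ acts on the Artin--Schreier--Witt cover of $\c F$ associated to the reduction modulo $p$ of $\theta^1(f)$. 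Since $\zeta_M$ is available in $F$ only after passing to $F_{\omega}$, I would actually carry out the computation with the $(M+\omega)$-th pairing on $F_{\omega}$: the admissibility condition $cp^{\omega}>2v_F(p)/(p-1)$ is precisely what makes the field-of-norms comparison tight modulo $p^{M+\omega}$, and the factor $p^{\omega}$ in the exponent on the right-hand side of \eqref{E0.1a} records the passage from the $(M+\omega)$-pairing back to the $M$-pairing.

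Second, I would apply the explicit Witt-symbol formula in the characteristic-$p$ higher local field $\c F$. The Coleman lift $\mathrm{Col}(\beta)\in K_N^t(L(\c F))$ represents $\beta$ on the characteristic-zero model $L(\c F)$, so that $d_{\log}\mathrm{Col}(\beta)$ becomes a meaningful differential form; similarly $f\in\m^0$ is the $\theta^1$-preimage of the corresponding unit. The Witt-symbol formula then produces an expression of the form $(\Tr\circ\Res)(g\,d_{\log}\mathrm{Col}(\beta))$, and the specific choice $g=f/H_{\omega}$ appears because $H_{\omega}=H^{\prime\,p^{M+\omega}}-1$ encodes $\zeta_{M+\omega}$ through the congruence $\gamma(H')\equiv\zeta_{M+\omega}\bmod\p_F^c$: dividing $f$ by $H_{\omega}$ is the normalization that converts the Witt pairing, which lands in $\Z/p^{M+\omega}$, into the Hilbert pairing valued in $\langle\zeta_{M+\omega}\rangle$.

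The main technical obstacle will be the compatibility of all the lifts and of their topologies. Specifically, one has to verify that $(\Tr\circ\Res)((f/H_{\omega})\,d_{\log}\mathrm{Col}(\beta))$ is independent of the non-unique choices of $H_{\omega}$ and of the Coleman section $\mathrm{Col}$; that the Shafarevich-exponential convergence built into $\theta^1$ is compatible with the topology in which $K_N^t(L(\c F))$ is topologically generated, so that the trace-residue formula can be applied term-by-term; and that the $p^{\omega}$-shift arising from the detour through $F_{\omega}$ can be tracked cleanly to reproduce exactly the exponent $p^{\omega}A$ on the right-hand side. Once these compatibilities are in place, the identity \eqref{E0.1a} follows by directly combining the diagram of reciprocity maps with the characteristic-$p$ Witt formula.
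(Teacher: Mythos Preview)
Your overall strategy matches the paper's: transport the Hilbert symbol to characteristic $p$ via the compatibility diagram for reciprocity maps, then evaluate using the Witt/Fontaine pairing and the Coleman lift. However, the crucial bridge between Kummer theory on $F$ and Witt--Artin--Schreier theory on $\c F$ is not correctly identified in your proposal, and this is where the actual content of the proof lies.

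You write that the Hilbert-symbol value is governed by how $\tilde\tau$ acts on ``the Artin--Schreier--Witt cover of $\c F$ associated to the reduction modulo $p$ of $\theta^1(f)$''. This is not the right object: the relevant Witt--Artin--Schreier datum is $f/H_{\omega}$, not any reduction of $\theta^1(f)$. The paper's Proposition~\ref{P5.4} is precisely the missing lemma: working inside Fontaine's ring $W(R_0(N))$, one shows that $1/H_{\omega}\equiv 1/([\varepsilon]-1)+w\pmod{p^M W(R_0(N))}$ for some integral $w$, where $[\varepsilon]-1$ is the standard Fontaine element attached to the tower of $p$-power roots of unity. This congruence, together with the Main Lemma of \cite{Ab1}, gives $a_{\tau}(f)\bmod p^M = b_{\tau}(\theta(f))$, i.e.\ the Witt cocycle for $f/H_{\omega}$ equals the Kummer cocycle for $\theta(f)$. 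The $\omega$-admissibility hypothesis $cp^{\omega}>2v_F(p)/(p-1)$ is used exactly here, to control the error term $([\varepsilon]-1)^2 w_2$ when raising $H$ to the $p^{M+\omega}$-th power; it is not, as you suggest, a device for passing to an $(M+\omega)$-th pairing on $F_{\omega}$. The paper never makes that detour: it works with the $M$-th pairing on $F$ throughout, and the factor $p^{\omega}$ in the exponent simply reflects that $\zeta_{M+\omega}^{p^{\omega}}$ is the chosen primitive $p^M$-th root of unity. Once Proposition~\ref{P5.4} gives $A=\tilde\tau(T)-T$ with $\sigma(T)-T=f/H_{\omega}$, Proposition~\ref{P2.5} (Fontaine's pairing equals the Witt symbol modulo $p^M$) immediately yields $A=(\Tr\circ\Res)\bigl((f/H_{\omega})\,d_{\log}\mathrm{Col}(\beta)\bigr)$.
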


Here (and everywhere below) 
$\Tr $ is the trace map for the field extension $W(k)_{\Q _p}/\Q _p$ and 
$\Res $ is $N$-dimensional residue. 

The above Theorem \ref{T0.2} gives one of most general approaches to 
the explicit formulas for the Hilbert symbol. The 
proof uses the strategy from \cite{Ab1} and 
the construction of the field-of-norms functor for  
higher local fields from \cite{Sch}. As a result, the explicit formula 
\eqref{E0.1a} 
is obtained from the explicit formula for the Witt symbol 
in characteristic $p$. Notice that symbol \eqref{E0.1a} depends not only on 
a fixed system of local parameters $\pi _1,\dots ,\pi _N$ of $F$ but also 
involves special lifts of elements  of $F$ to $L(\c F)$. 

The result of the above Theorem \ref{T0.2} is related very closely 
to Vostokov's explicit formula for the $M$-th Hilbert symbol 
$F^*\times K_N^t(F)\longrightarrow \langle\zeta _M\rangle $. 
In this formula the elements of $F^*$ appear as the results of 
the substitution $t_i\mapsto\pi _i$, $i=1,\dots ,N$, into formal Laurent 
series with coefficients in $W(k)$ and indeterminants $t_1,\dots ,t_N$. 
Vostokov's proof of this formula is based on a hard computation showing that the  
formula gives the same result for arbitrary choices of local parameters 
$\pi _1,\dots ,\pi _N$. 

In Section 3 we develop a slightly 
different approach to Vostokov's result. First of all, the Vostokov pairing 
has two different aspects. One is purely $K$-theoretic: it gives a (non-degenerate) 
pairing between $K_1(F)/p^M$ and $K_N(F)/p^M$ and factors through the canonical 
morphism 
\begin{equation}\label{E0.2a}
K_1(F)/p^M\times K_N(F)/p^M\longrightarrow K_{N+1}(F)/p^M.
\end{equation} 
(Note that Vostokov's formula gives also a pairing between 
$K_i(F)/p^M$ and $K_{N-i}(F)/p^M$  for $1<i<N$.) We establish these properties 
following the strategy from \cite{Ab1} and using an idea of one 
calculation from \cite{BV}. Note that we can work throughout with  
our fixed system of local parameters $\pi _1,\dots ,\pi _N$. 
Then the  Galois-theoretic aspect of Vostokov's pairing,  
i.e. that it coincides with the Hilbert symbol, follows by  
an easy calculation from the following two elementary facts:

--- the Hilbert symbol also factors through the map \eqref{E0.2a};

--- $K_{N+1}(F)/p^M$ is generated by one element which can be written in 
terms of our fixed system of local parameters $\pi _1,\dots ,\pi _N$. 

At the end of Section \ref{S5} we show that symbol \eqref{E0.1a} from Theorem 
\ref{T0.2} coincides with Vostokov's pairing if we use a \lq\lq very special \rq\rq\ 
SDR tower  $F^0_{\d }=\{F_n^0\ |\ n\geqslant 0\}$ such 
that $F^0_0=F$ and for all $n\geqslant 0$, $F^0_n$ has a system of local parameters 
$\pi _1^{(n)},\dots ,\pi _N^{(n)}$ with  
$\pi _i^{(n+1)p}=\pi _i^{(n)}$ 
and $\pi _i^{(0)}=\pi _i$ for all $1\leqslant i\leqslant N$. 

Note that other interpretations of Vostokov's formula have been given by K.Kato  
\cite{Ka} in terms of Fontaine-Messing theory and by S.\,Zerbes 
\cite{Ze} in terms of $(\varphi ,\Gamma )$-modules under an additional restriction on 
the basic field $F$. 
Note also the paper \cite{Fu} where special cases of 
the constructions of the field-of-norms functor in the context 
of higher local fields were treated.

The structure of the paper is as follows. 
In Section \ref{S1} we discuss basic matters: 
the concept of higher local field, the $P$-topology,  
special systems of topological generators 
for the Milnor $K$-groups and the norm 
map in the context of $K$-groups. In Section \ref{S2} we give an 
invariant approach to the concept of residue, the Witt symbol 
and the Coleman map 
in the context of higher local fields.  
In Section \ref{S3} we recover the construction of Vostokov's pairing 
following mainly the strategy of the paper \cite{Ab1}. In Section 
\ref{S4} we use the field-of-norms functor $X$ to relate the behaviour 
of topological Milnor $K$-groups in SDR towers. Finally, in Section \ref{S5} 
we prove the compatibility of the field-of-norms functor with class field theories 
for the fields $\c F=X(F_{\d })$ and $F=F_0$ and use the compatibility 
of the Kummer theory for $F$ and the Witt-Artin-Schreier theory for $\c F$ from 
\cite{Ab2} to deduce the statement of Theorem \ref{T0.2}.

\section{Preliminaries} \label{S1}

Most of the notation introduced in 
this Section will be used in the next sections without special references. 
In particular, this holds for the notation $F$, 
$\pi _1,\dots ,\pi _N$, $\c F$, $\bar t_1,\dots ,\bar t_N$, 
$O(\c F)$ and $L(\c F)$.

\subsection{Higher local fields} \label{S1.1}

Let $L$ be an $N$-dimensional local field. This means that 
$L$ is a complete discrete valuation field and its (first) residue field 
$L^{(1)}$ is an $(N-1)$-dimensional local field. In our setting, $0$-dimensional 
local fields are finite fields of charactersitic $p$. Let $L^{(N)}$ be 
the $N$-th residue field of $L$. By inductive definition this 
means that  $L^{(N)}=(L^{(1)})^{(N-1)}$ and, therefore, 
it is a finite field of characteristic $p$. 
The system $u_1,\dots ,u_N$ is a system of local parameters of $L$, 
if $u_1$ is a local parameter of $L$, $u_2,\dots ,u_N$ 
belong to the valuation ring $O_L$ of $L$ and 
the images of $u_ 2,\dots ,u_N$ in $L^{(1)}$ 
form a system of local parameters of $L^{(1)}$. 
The field $L$ is equipped with a special topology (we call it the $P$-topology) 
which relates all $N$ valuation topologies of $L$, $L^{(1)}$, 
$L^{(2)}:=(L^{(1)})^{(1)},\dots ,L^{(N)}:=(L^{(N-1)})^{(1)}$.  
The idea how to construct such topology appeared 
first in \cite{Pa2} and then was considerably developed and studied 
in \cite{Fe, Zh1, MZh}. We can sketch its definition  
as follows. 

Fix a system of local aparameters $u_1,\dots ,u_N$ in $L$. 
Note that  any 
element $x \in L$ can be written uniquely as a formal series 
\begin{equation} \label{E1.1}
x=\sum _{a=(a_1,\dots,a_N)}\alpha_{a}u_1^{a_1}\cdots u_N^{a_N},
\end{equation} 
where all coefficients $\alpha_{a}$ are the Teichm\"uller representatives of 
the elements of $L^{(N)}$ in $L$. Here $a\in\Z ^N$ and 
there are (depending on the element $x$) 
integers $I_1, I_2(a_1),\dots,I_N(a_1,\dots,a_{N-1})$ such that 
$\alpha_{ a}=0$ if either $a_1<I_1$ or $a_2<I_2(a_1),\dots, $ 
or $a_N<I_N(a_1,\dots,a_{N-1})$. Then the $P$-topological structure 
on $L$ can be defined by induction on $N$ as follows. 
If $N=0$ then it is discrete. If $N\geqslant 1$ then 
$\bar u_2=u_2\,\mathrm{mod}\,u_1$, \dots , 
$\bar u_N=\,\mathrm{mod}\,u_N$ is a system of local parameters in $L^{(1)}$ 
and we can define a section $s:L^{(1)}\longrightarrow L$ 
by $\sum _{a}\alpha _a\bar u_2^{a_2}\dots \bar u_N^{a_N}\mapsto\sum _{a}
u_2^{a_2}\dots u_N^{a_N}$. By definition, 
the basis of open neighbourhoods $\c C_{L,\{u_1,\dots ,u_N\}}$ 
in $L$ consists of the sets $\sum _{b\in\Z }t^bs(U_b)$, where 
all $U_b\in\c C_{L^{(1)},\{\bar u_2,\dots ,\bar u_N\}}$ and 
$U_b=L^{(1)}$ if $b\gg 0$. One can prove then that this 
 does not depend on the initial choice of local parameters $u_1,\dots ,u_N$. 
Then any compact subset in $L$ is a closed subset in the compact subset 
of the form $\sum _{b\in\Z }t^bs(C_b)$, where all $C_b\subset L^{(1)}$ are compact and 
$C_b=0$ for $b\ll 0$. In particular, the set of all $\xi\in L$ given 
by $\eqref{E1.1}$ with fixed $I_1, I_2(a_1),\dots ,I_N(a_1,\dots ,a_{N-1}$, 
is compact. The following property explains that the concept of 
convergency in the $P$-topology just coincides with the concept of 
convergency of formal power series. 

\begin{Prop} A sequence $\xi _n\in L$ converges to 
 $\xi =\sum _{a}\alpha _au_1^{a_1}\dots u_N^{a_N}\in L$ if and only if 
for any $a\in\Z^N$, the sequence $\alpha _{an}$ converges to $\alpha _a$ in $k$. 
\end{Prop}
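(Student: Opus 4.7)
The plan is induction on the dimension $N$, following the recursive definition of the $P$-topology. The base case $N=0$ is immediate: $L=k$ is finite and discrete, so sequence convergence in $L$ is simply eventual equality, which coincides trivially with coefficient-wise convergence (there is only one coefficient).

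For the inductive step, I would work with the slice decomposition supplied by the section $s\colon L^{(1)}\to L$: every $y\in L$ is uniquely expressed as $y=\sum_{b\in\Z}u_1^b s(y^{(b)})$ with $y^{(b)}\in L^{(1)}$, and the coefficient of $y$ indexed by $a=(b,a_2,\dots,a_N)$ is exactly the coefficient of $y^{(b)}$ indexed by $(a_2,\dots,a_N)$ in the local parameters $\bar u_2,\dots,\bar u_N$ of $L^{(1)}$. For the forward direction, given $a=(a_1,\dots,a_N)\in\Z^N$, I would construct a basic open neighbourhood $V=\sum_b u_1^b s(U_b)$ of $0$ in $L$ by setting $U_b=L^{(1)}$ for $b\ne a_1$ and choosing $U_{a_1}\in\c C_{L^{(1)}}$ — via the inductive hypothesis applied inside $L^{(1)}$ — so that membership in $U_{a_1}$ forces the $(a_2,\dots,a_N)$-coefficient to vanish. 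Since $\xi_n\to\xi$ gives $\xi_n-\xi\in V$ eventually, and the coefficients take values in the discrete finite field $k$, this forces $\alpha_{an}=\alpha_a$ for $n$ large.

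For the backward direction, fix a basic neighbourhood $V=\sum_b u_1^b s(U_b)$ with $U_b=L^{(1)}$ for $b\geqslant B$; then $\xi_n-\xi\in V$ is equivalent to $\xi_n^{(b)}-\xi^{(b)}\in U_b$ for each $b<B$. The hypothesis translates slice-by-slice into coefficient-wise convergence of $\xi_n^{(b)}$ to $\xi^{(b)}$ inside $L^{(1)}$, whence the inductive hypothesis delivers $\xi_n^{(b)}\to\xi^{(b)}$ in the $P$-topology of $L^{(1)}$ for each individual $b$. The main obstacle is precisely the uniformity in $b$: a priori infinitely many slices $b<B$ could carry non-trivial $U_b$, while the inductive hypothesis yields eventual inclusion only one slice at a time. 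To close this gap I would invoke the structure of compact subsets recorded just before the Proposition, namely that any convergent formal series has uniformly bounded support from below: there is an index $I_1$ with $\xi_n^{(b)}=\xi^{(b)}=0$ for all $b<I_1$ and all $n$, so the non-trivial constraints reduce to finitely many $b\in[I_1,B)$, which are then handled simultaneously by the inductive hypothesis. This is the only delicate point; the rest of the argument is a direct unwinding of the inductive definition.
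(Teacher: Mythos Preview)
Your overall strategy --- induction on $N$, reduction to $\xi=0$, slice decomposition via the section $s$ --- is exactly the paper's. The forward direction matches. The difference, and the genuine gap, is in the backward direction at precisely the point you yourself flag as delicate.

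You write that ``there is an index $I_1$ with $\xi_n^{(b)}=\xi^{(b)}=0$ for all $b<I_1$ and all $n$'', citing the description of compact subsets given just before the Proposition. But that passage tells you only what compact sets \emph{look like}; it does not tell you that your particular sequence lies in one. Each $\xi_n$ has its own lower bound $I_1(n)$ on its $u_1$-support, and nothing in the hypothesis of coefficient-wise convergence forces these bounds to be uniform in $n$. The paper does not simply assert the uniform bound: it states, as a separate fact requiring its own induction on $N$, that a coefficient-wise null sequence is contained in some compact $C\subset L$, and only then reads off a uniform $b_0$ from the shape of $C$. Your argument collapses this step into a bare assertion.

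This is not cosmetic. Already for $N=1$, with $L=k((u_1))$ carrying the valuation topology, the sequence $\xi_n=u_1^{-n}$ has every fixed coefficient eventually zero, yet $\xi_n\not\to 0$ and no uniform $I_1$ exists. So the uniform-lower-bound step cannot be extracted from coefficient-wise convergence together with the mere description of compact sets; one needs the additional compactness lemma the paper invokes (and that example shows the lemma itself, taken at face value, is delicate). You correctly located the only hard point in the argument; what is missing is an actual mechanism --- the paper's separate inductive compactness claim --- rather than a reference to the catalogue of compact sets.
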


\begin{proof} The proof can be easily reduced to the case $\xi =0$. 
 
Suppose for $b\in\Z $, $\bar\xi _{bn}\in L^{(1)}$ are such that 
$\xi _n=\sum _{b}u_1^{b}s(\bar\xi _{bn})$. Clearly, 
$\underset{n\to\infty }\lim \xi _n= 0$ implies that for any $b\in\Z $, 
$\underset{n\to\infty }\lim\bar\xi _{bn}=0$. 
Therefore, by induction on $N$ we obtain that 
all $\underset{n\to\infty }\lim\alpha _{an}=0$.  

Inversely, suppose that for all $a\in\Z ^N$, $\underset{n\to\infty }
\lim\alpha _{an}=0$. Then by induction on $N$, for any 
$b\in\Z $, $\underset{n\to\infty }\lim\bar\xi _{bn}=0$. One can prove 
(again by induction on $N$) the existence of a compact $C\subset L$ 
such that all $\xi _n\in C$. (I.\,Fesenko pointed out to the first author that 
this implies that the $P$-topology is compactly generated.) In particular, 
there is $b_0\in\Z $ such that $\bar\xi _{bn}=0$ if $b<b_0$. Now take any 
$U=\sum _{b}u_1^bs(U_b)\in\c C_{L,\{u_1,\dots ,u_N\}}$. Then there is 
$b_1\in\Z $ such that $U_b=L^{(1)}$ for all $b> b_1$. For 
$b_0<b\leqslant b_1$, let $m(b)\in\Z $ be such that 
$\bar\xi _{bn}\in U_b$ if $n\geqslant m(b)$. Then for 
$n\geqslant\max\{\,m(b)\,|\,b_0<b\leqslant b_1\}$, $\xi _n\in U$, 
i.e. $\underset{n\to\infty }\lim\xi _n=0$. 
\end{proof}

In terms of the power series \eqref{E1.1}, the $N$-dimensional 
valuation ring $\c O _L$, resp. the maximal ideal $\m _L$, of $L$ 
consists of the elements $x$ such that  
all $\alpha _a=0$ if $a<\bar 0=(0,\dots ,0)$, 
resp. $a\leqslant\bar 0$, with respect to the 
lexicographic ordering. Note that $L$, $\c O_L$ and $\m _L$ are $P$-topological 
additive groups.  
Multiplication does not make $L^*$ into a topological group, but all 
operations in the field $L$ are sequentially $P$-continuous. The choice of 
local parameters $u_1,\dots ,u_N$ provides an isomorphism 
$L^*\simeq k^*\times \langle u_1\rangle\times\dots \langle 
u_N\rangle\times (1+\m _L)^{\times }$, 
where only the last factor has a non-trivial $P$-topological structure. 

The concept of $P$-topology plays a very important 
role in this paper and we refer usually to 
the papers \cite{Zh1} and \cite{MZh} 
for its detailed exposition. 
In particular, these papers contain the study of infinite products 
in $L$. 
The following fact clarifies the meaning of infinite products and 
will be used below without 
special references. Suppose $I_1,\dots ,I_N(a_1,\dots ,a_{N-1})$ 
are the above defined parameters. Consider the infinite product 
of the form 
$\underset{a>\bar 0}\prod (1+\alpha _au_1^{a_1}\dots u_N^{a_N})$, 
where, as earlier, $a=(a_1,\dots ,a_N)\in\Z ^N$, $\alpha _a$ are 
the Teichm\"uller representatives of elements of $k$ and $\alpha _a=0$ 
if either $a_1<I_1$, or $a_2<I_2(a_2)$, \dots , or $a_N<I_N(a_1,\dots ,a_{N-1})$. 
Then any such infinite product converges in 
$L^*$ and any element from $1+\m _L$ can be presented uniquely  
as a such infinite product 
(with suitably chosen parameters $I_1,\dots ,I_N(a_1,\dots ,a_N)$).

The main object we shall deal with 
is an $N$-dimensional local field $F$ of characteristic 0 with first residue 
field $F^{(1)}$ of charactersitic $p$, $N$-th residue field $k$ 
(which is necessarily finite) and  
a fixed system 
of local parameters $\pi _1,\dots ,\pi _N$. 
Fix an algebraic closure $\bar F$ of $F$, set 
$\Gamma_F=\Gal(\bar F/F)$ and denote by $\Gamma _F^{ab}$ 
the maximal abelian quotient of $\Gamma _F$. 

We also consider $N$-dimensional local fields of 
characteristic $p$ with last residue field $k$. Any such field $\c F$ is isomorphic to the field of formal Laurent power series 
$k((\bar t_N))\dots ((\bar t_1))$, where 
$\bar t_1,\dots ,\bar t_N$ is any system of local parameters of $\c F$. 
We use this system of local parameters as a $p$-basis for $\c F$ to construct a 
flat $\Z_p$-lift $O(\c F)$ of $\c F$ to characteristic 0. By definition 
$O(\c F)=\underset{n}\varprojlim\,O_n(\c F)$, where for all $n\in\N $, 
$$O_n(\c F)=W_n((t_N))\dots ((t_1))\subset W_n(\c F)$$
are $\Z /p^n$-flat lifts of $\c F$ and for $1\leqslant i\leqslant N$, 
$t_i=[\bar t_i]$ are the Teichm\"uller representatives of $\bar t_i$.

The lift 
$O(\c F)$ is a complete discrete valuation ring of  the $(N+1)$-dimensional 
local field $L(\c F)=\mathrm{Frac}\,O(\c F)$. Note that $L(\c F)^{(1)}=\c F$ and 
$L(\c F)$ has a fixed system of local parameters $p,t_1,\dots ,t_N$ 
such that for $1\leqslant i\leqslant N$, $t_i\,\mathrm{mod}\,p=\bar t_i$. 
The elements of $L(\c F)$ can be written as 
formal power series $\sum_a \gamma_a t_1^{a_1}\cdots t_N^{a_N}$ 
with natural  conditions on the coefficients $\gamma_a \in W(k)$, 
where $a=(a_1,\dots,a_N)\in \Z^N$.

\subsection{$P$-topological bases of $\c F^*$ and $F^*/F^{*p^M}$}\label{S1.2}

The concept of $P$-topology allows us to describe 
explicitly the structure of the multiplicative groups 
$\c F^*$ and of $F^*/F^{*p^M}$ under the additional assumption 
that $\zeta _M\in F$.

Consider the case of the field $\c F=k((t_N))\dots ((t_1))$. 
Choose an $\F _p$-basis $\theta _1,\dots ,\theta _s$ 
of $k\simeq\F _{p^s}$. 
Then any element of $\c F^*$ 
can be uniquely written as an infinite product as follows 
$$\gamma \bar t_1^{a_1}\dots \bar t_N^{a_N}
\prod _{j,b}(1+\theta _j\underline{t}^b)^{A_{jb}},$$ 
where $\gamma \in k^*$, $1\leqslant j\leqslant s$, 
$a_1,\dots ,a_N\in\Z $, 
$b$ runs over the set of all multi-indices 
$(b_1,\dots ,b_N)\in\Z ^N\setminus p\Z ^N$, $b>\bar 0$, 
$\underline{\bar t}^b:=\bar t_1^{b_1}\dots \bar t_N^{b_N}$, 
and all $A_{jb}\in\Z _p$. The only essential condition on 
the above infinite product is that it must converge in 
$\c F$ with respect to the $P$-topology. In particular, 
with the above notation 
the elements $\eta _{jb}:=1+\theta _j\underline{\bar t}^b$ 
form a set of free topological 
generators of 
the subgroup $(1+\m _{\c F})^{\times }$ of $\c F^*$.

Consider the case of the field $F$. In this case we have a similar 
description of the group $F^*/F^{*p^M}$ under the assumption that 
$F$ contains a primitive $p^M$-th root of unity $\zeta _M$. 

Suppose $p=\pi _1^{e_1}\dots\pi _N^{e_N}\eta =\underline{\pi }^{e}\eta $, 
where $e=(e_1,\dots ,e_N)\in\Z ^N$ and 
$\eta\in  \c O _F^*$. 
Then Hensel's Lemma implies that 
any element of $F^*$ modulo $F^{*p^M}$ appears in the form 
$$[\gamma ]\pi _1^{a_1}\dots \pi _N^{a_N}
\epsilon _0^{A_0}
\prod _{j,b}\eta _{jb}^{A_{jb}},$$ 
where 

---  $a_1,\dots ,a_N,A_0$ and all $A_{jb}$ are integers uniquely determined  
modulo $p^M$; 

--- $\eta _{jb}:=1+[\theta _j]\underline{\pi }^b$, where 
the multi-index $b=(b_1,\dots ,b_N)$ runs over the set 
of all $b\in\Z ^N\setminus p\Z ^N$ such that 
$\bar 0<b<e^*:=ep/(p-1)$;

---  $\epsilon _0=1+[\theta _0]\underline{\pi }^{e^*}$, 
where $\theta _0\in k$ is such that 
$1+[\theta _0]\underline{\pi }^{e^*}\notin (1+\m _F)^p$. 

{\it Remarks} 1) There is a more natural construction of 
the generator $\epsilon _0$ related to the concept of 
$p^M$-primary element. 
By definition, 
$\epsilon \in F^*$ is $p^M$-primary if the extension 
$F(\epsilon^{1/p^M})/F$ is purely unramified of degree $p^M$, i.e. 
the $N$-th residue fields satisfy  
$[F(\epsilon^{1/p^M})^{(N)}:F^{(N)}]=p^M$. 
Note that the images of $p^M$-primary elements in 
$F^*/F^{*p^M}$ form a cyclic group of order $p^M$.
One of first explicit constructions of $p^M$-primary 
elements was given by Hasse, cf. \cite{Vo1}, and can be explained as follows. 
Let $\xi\in\m _F$ be such that $E(1,\xi )=\zeta _M$. 
Let $\alpha _0\in W(k)$ be such that 
$\Tr (\alpha _0)=1$ and let $\beta\in W(\bar k)$ 
be such that $\sigma (\beta )-\beta =\alpha _0$. Then 
$\epsilon _0=E(\beta ,\xi )^{p^M}$ is 
a $p^M$-primary element of $F$. 
In Section \ref{S5} we shall use the $p^M$-primary element in the form 
$\epsilon _0=\theta (\alpha _0H_0)$, where 
$H_0=H_0(\zeta _M)\in \m ^0$ was defined in the Introduction. 
A natural explanation of this construction of $p^M$-primary element appears there 
as a special case of the relation between 
the Witt-Artin-Schreier and Kummer theories.

2) The original construction of the Shafarevich basis \cite{Sh} 
systematically uses the Shafarevich exponential 
$E(w,X)$ and establishes an explicit isomorphism 
$F^*/F^{*p^M}\simeq \langle \bar\pi _1\rangle ^{\Z /p^M}\times 
\langle \bar\epsilon _0\rangle ^{\Z/p^M }\times \prod _{b}
 W_M(k)_b$, where $0<b<e^*$, 
$\bar\pi _1=\pi _1\,\mathrm{mod}\, F^{*p^M}$, 
$\bar\epsilon _0=\epsilon _0\,\mathrm{mod}\, F^{*p^M}$ and $gcd(b,p)=1$, 
in the 1-dimensional case. This construction can 
be generalised to the $N$-dimensional case.
\medskip 

\medskip

\subsection{Topological Milnor $K$-groups}\label{S1.3}

For a higher local field $L$ and a positive integer 
$n$, let $K_n(L)$ be the $n$-th Milnor $K$-group of $L$. 
Let $VK_n(L)$ be the subgroup of 
$K_n(L)$ generated by the symbols having at least 
one entry in $V_L :=1+\m _L$. If 
$L$ is of dimension $N$ and 
$u _1,\dots,u _N$ is a system of 
local parameters of $L$, then, by \cite{Zh1},  
\begin{equation}\label{E1.3}
K_N(L)\simeq VK_N(L) \oplus \Z  \oplus 
\prod _{1\leqslant i\leqslant N}A_{iN}(L),
\end{equation}
where $\Z $ corresponds to the subgroup 
generated by $\{u_1,\dots,u _N\}$ and 
for all $1\leqslant i\leqslant N$, the group 
$A_{iN}(L)\simeq L^{(N)*}$ consists of the symbols  
$\{[\alpha ],u _1,\dots,u_{i-1},u _{i+1},\dots,u _N\}$ with 
$\alpha \in L^{(N)*}$. 

Following \cite{Fe, Zh1}  we 
introduce the $P$-topology on $K_N(L)$ as follows. 
The topology on $VK_N(L)$ is defined to be 
the finest topology such that the map 
of topological spaces $V_L \times (L^*)^{N-1}\to VK_N(L)$ 
is sequentially continuous. 
The other direct summands in \eqref{E1.3} are 
equipped with the discrete topology. Then the 
topological Milnor $K$-groups $K_N^t(L)$ are 
defined to be $K_N(L)/\Lambda$, where $\Lambda$ 
is the intersection of all neighbourhoods 
of zero, with the induced topology. 
By \cite{Fe}, 
$\Lambda=\bigcap_{n\ge 1}nVK_N(L)=
\bigcap_{m\ge 1}p^mVK_N(L)$, using $l$-divisibility 
of $VK_N(F)$, for any $l$ prime to $p$. In particular, for any $M\geqslant 1$, 
$K_N^t(L)/p^M=K_N(L)/p^M$ and the decomposition \eqref{E1.3} 
induces the decomposition \linebreak 
$K_N^t(L)\simeq\Z\oplus VK_N^t(L)$. 

The advantage of the topological $K$-groups 
$K_N^t(L)$ is that they admit $P$-topological generators 
analogous to those of the multiplicative group 
$L^*$ from Subsection \ref{S1.2}.
Before stating these results notice that 
for any higher local field $K$ one can introduce 
a filtration of $K_N^t(K)$ by the subgroups 
$U_K^cK_N^t(K)$, where $c\geqslant 0$. These subgroups 
are generated by the symbols $\{\alpha _1,
\dots ,\alpha _N\}\in K_N^t(K)$ such that 
$v_K(\alpha _1-1)\geqslant c$. Here $v_K$ 
is the 1-dimensional valuation on $K$ such that $v_K(K^*)=\Z$.  
Then the 
classical identity 
\begin{equation}\label{E1.3'}
 \{1-\alpha ,1-\beta\}=\{\alpha (1-\beta ),1+\alpha\beta (1-\alpha )^{-1}\}
\end{equation}
for 2-dimensional Milnor $K$-groups implies that 
$$\{\alpha _1,\dots ,\alpha _N\}\in U_K^{c_1+\dots +c_N}K_N^t(K)$$  
if  
$v_K(\alpha _i-1)\geqslant c_i$ 
for $1\leqslant i\leqslant N$. 
\medskip

$\bullet $\ {\it Generators of $K_N^t(\c F)$.}

 For 
$a=(a_1,\dots ,a_N)\in\Z ^N$, $a\notin p\Z ^N$, 
$a>\bar 0$, let $1\leqslant i(a)\leqslant N$ be 
such that $a_1\equiv \dots \equiv a_{i(a)-1}\equiv 0\,\mathrm{mod}\,p$ 
but $a_{i(a)}\not\equiv 0\,\mathrm{mod}\,p$. 
As earlier, choose an $\F_p$-basis $\theta _1,\dots ,\theta _s$ of $k$ 
and for all above multi-indices $a$ and 
$1\leqslant j\leqslant s$, set 
\begin{equation}\label{E1.3a}
\varepsilon _{ja}=\{1+\theta _j\underline{\bar t}^a,
\bar t_1,\dots,\bar t_{i(a)-1},\bar t_{i(a)+1},
\dots,\bar t_N\}.
\end{equation}
This is a system of free topological generators of $VK_N^t(\c F)$ and 
$K_N^t(\c F)=VK_N^t(\c F)\oplus\langle\varepsilon _0\rangle $, 
where $\varepsilon _0=\{\bar t_1,\dots ,\bar t_N\}$. 
This means that any element 
 $\xi\in K_N^t(\c F)$ can be written in the form 
$\xi =A_0\varepsilon _0+\sum _{j,b}A_{jb}\varepsilon _{jb}$,  
where $A_0$ and all $A_{jb}$ belong to $\Z _p$ and,  
for any $1\leqslant i_0\leqslant N$, the infinite product 
$$\underset{j,b, i_0(b)=i_0}
\prod (1+\theta _j\underline{t}^b)^{A_{jb}}$$ 
converges in $\c F$. 
This can 
be obtained from relation \eqref{E1.3'}. Moreover, for a given $\xi\in K_N^t(\c F)$,  
the corresponding coefficients $A_0$ and 
$A_{jb}$ are uniquely determined by $\xi $, in other words the above system of symbols 
$\varepsilon _0$ and $\varepsilon _{jb}$ is a system of free topological 
generators for $K_N^t(\c F)$. 
This was established by Parshin \cite{Pa2} via 
an analogue of the Witt pairing, cf. Subsection \ref{S2.2} below. 
It can be also deduced from 
the Bloch-Kato theorem \cite{BK}, which gives an 
explicit description of the grading of the filtration 
$U_{\c F}^c(K_N^t(\c F))$, $c\geqslant 0$. 

\medskip 

$\bullet $\ {\it Generators of $K_N^t(F)/p^M$, $\zeta _M\in F$.}

Introduce similarly 
the elements 
\begin{equation}\label{E1.3c}
\varepsilon _{ja}=\{1+[\theta _j]
\underline{\pi }^a,\pi _1,\dots,\pi _{i(a)-1},\pi _{i(a)+1},
\dots,\pi _N\},
\end{equation} 
where $1\leqslant j\leqslant s$, $a\in\Z ^N\setminus p\Z ^N$ and 
$\bar 0<a<e^*:=ep/(p-1)$. Set $\varepsilon _0=\{\pi _1,\dots ,\pi _N\}$  
and for $1\leqslant i\leqslant N$, 
\begin{equation}\label{E1.3b}
\varepsilon _{ie^*}=\{\epsilon _0,\pi _1,\dots ,
\pi _{i-1},\pi _{i+1},\dots ,\pi _N\},
\end{equation}
where $\epsilon _0$ was defined in Subsection \ref{S1.2}. 

Then for similar reasons to the case $L=\c F$, the above elements $\varepsilon _0$, 
$\varepsilon _{ja}$ and $\varepsilon _{ie^*}$ 
give a set of $P$-topological generators 
of the $\Z /p^M$-module $K_N^t(F)/p^M$. 
The Bloch-Kato theorem 
\cite{BK} about the gradings of $U^c_F(K_N^t(F))$, where 
$0\leqslant c\leqslant e'=v_F(p)p/(p-1)$, implies 
that the system of topological generators 
\eqref{E2.2a} is a topological $\Z /p$-basis of $K_N^t(F)/p$. 
The fact that we have a system of $\Z /p^M$-free 
topological generators can be deduced from the description of 
$p$-torsion in $K_N(F)$ from \cite{Fe}. This fact 
can be also established directly from the non-degeneracy of  
Vostokov's pairing, cf. Section \ref{S3}. 
\medskip 

$\bullet $\ {\it Generators of $K_{N-1}^t(F)/p^M$ and 
$K_{N+1}^t(F)/p^M$, $\zeta _M\in F$.}

A similar technique can be used to prove that 
$K_{N-1}^t(F)/p^M$ is topologically generated by the elements of the form: 
\medskip 

--- $\{1+[\theta _j]\underline{\pi }^a,\pi _{j_1},\dots ,\pi _{j_{N-2}}\}$, 
where $1\leqslant j\leqslant s$, $a\in\Z ^N\setminus p\Z ^N$, $\bar 0<a<e^*$, 
$1\leqslant j_1<\dots <j_{N-2}\leqslant N$ and 
$i(a)\notin \{j_1,\dots ,j_{N-2}\}$. 
\medskip 

--- $\{\epsilon _0,\pi _{j_1},\dots ,j_{N_2}\}$, where 
$1\leqslant j_1<\dots <j_{N-2}\leqslant N$. 
\medskip 

Similarly, in the case of $K_{N+1}/p^M$ 
we have only one generator given by the symbol 
$\{\epsilon _0,\pi _1,\dots ,\pi _N\}$. 
\medskip

\subsection{The Norm map}\label{S1.4}

For a finite extension of higher local fields $L/K$, 
the Norm-map of Milnor $K$-groups $N_{L/K}: K_n(L)
\to K_n(K)$ was defined in \cite{BT} and \cite{FV}. 
It has the following properties:

\begin{enumerate}
\item if $\alpha _1\in L^*$ and $\alpha _2,\dots ,\alpha _n\in K^*$ then 
$$N_{L/K}\{\alpha _1,\alpha _2,\dots ,\alpha _n\}=
\{N_{L/K}(\alpha _1),\alpha _2,\dots ,\alpha _n\};$$

\item for a tower of finite field extensions 
$F \subset M \subset L$, it holds $N_{L/F}=N_{L/M}\circ N_{M/F}$; 

\item if $i_{L/K}:K_n(K)\longrightarrow K_n(L)$ is 
induced by the embedding $K\subset L$ then 
$i_{L/K}\circ N_{L/K}=[L:K]\,\id _{K_n(K)}$. 
\end{enumerate}

By \cite{Zh1}, $N_{L/F}$ is sequentially 
$P$-continuous and therefore induces a 
continuous morphism of the corresponding 
topological $K$-groups which will be denoted by the same symbol. 

Using the unique extension of $v_K$, define the subgroups 
$U_K^c(K_N^t(L))\subset K_N^t(L)$ 
for all $c\geqslant 0$ and algebraic extensions $L$ of $K$, to be 
the groups generated by the symbols 
$\{\alpha _1,\dots ,\alpha _N\}$ such that 
$v_K(\alpha _1-1)\geqslant c$. Then the general 
definition of the norm map $N_{L/K}$, e.g. cf. \cite{FV}, 
implies that for all $c\geqslant 0$, 
$N_{L/K}$ maps $U^c_K(K_N^t(L))$ to 
$U^c_K(K^t_N(K))$ and preserves the decomposition 
$K_N^t(L)=\Z \oplus VK_N^t(L)$ from Subsection \ref{S1.3}.
\medskip 
\medskip

\section{Pairings in the characteristic $p$ case}\label{S2}

\subsection{Residues}\label{S2.1} 

For any $n\geqslant 0$, denote by $\Omega^n_{L(\c F)}$ 
the $L(\c F)$-module of $P$-continuous differentials 
of degree $n$ for $L(\c F)$. For $n=N$, this module is free of rank 1 with 
the basis $dt_1\wedge \dots \wedge dt_N$. 

Suppose $\omega =fdt_1\wedge\dots \wedge dt_N
\in\Omega ^N_{L(\c F)}$ with $f\in L(\c F)$. Then 
$$f=\sum_{a=(a_0,\dots ,a_N)}[\alpha_a]p^{a_0}t_1^{a_1}\dots t_N^{a_N}$$
and there is an $A_0(f)\in\Z $ such that  
$\alpha _a=0$ if $a_0<A_0(f)$, cf. Subsection \ref{S1.1}. 
This makes sense for the following definition 
of the $L(\c F)$-residue $\Res _{L(\c F)}$ of $\omega $. 

\begin{definition} 
$\Res_{L(\c F)}(\omega )=\sum_{a=(a_0,-1,\dots , -1)}[\alpha _a]p^{a_0}$.
\end{definition}

We have the following standard properties: 

--- if $\omega '\in\Omega ^{N-1}_{L(\c F)}$ then $\Res_{L(\c F)}(d\omega ')=0$;

--- if $\bar t_1',\dots ,\bar t_N'$ is another system of 
local parameters in $\c F$ and $t_1',\dots ,t_N'$ are 
their lifts to $O(\c F)$ then 
$$\Res_{L(\c F)}\left (\frac{dt_1'}{t_1'}
\wedge\dots\wedge\frac{dt_N'}{t_N'}\right )=1.$$

--- if $\Res_{L(\c F)}\omega =c$ then there is an 
$\omega'\in\Omega ^{N-1}_{L(\c F)}$ such that 
$$\omega =d\omega '+c\frac{dt_1'}{t_1'}
\wedge\dots\wedge\frac{dt_N'}{t_N'}$$

The above properties do not show that 
the residue $\Res_{L(\c F)}$ is apriori independent 
of the choice of local parameters of $\c F$ 
because the construction of the 
lift $L(\c F)$ involves a choice of such system 
of local parameters. 
Therefore, we need to slightly modify the above approach 
to the concept of residue.

For any $i\in\Z $,  denote by $O(\sigma ^i\c F)$ 
the $\Z _p$-flat lifts of $\sigma ^i\c F$ via 
the system of local parameters 
$\bar t_1^{p^i},\dots ,\bar t_N^{p^i}$. Set 
$O_M(\sigma ^i\c F):=O(\sigma ^i\c F)/p^M$. 
These  flat $\Z /p^M$-lifts $O_M(\sigma ^i\c F)$ of $\sigma ^i(\c F)$ 
do depend on the system of local parameters 
$\bar t_1,\dots ,\bar t_N$ but we have the following properties:

--- $W_M(\sigma ^{M-1}\c F)\subset O_M(\c F)
\subset W_M(\c F)\subset O_M(\sigma^{-M+1}\c F)$;

--- $W_M(\c F)=O_M(\c F)+pO_{M-1}(\sigma^{-1}\c F)+
\dots +p^{M-1}O_1(\sigma^{-M+1}\c F)$.

Let $\widetilde{\Omega }(\c F,M)$ be the 
$\Z _p$-submodule of $\Omega ^N_{W_M(\c F)}$ 
consisting of differential forms 
$\omega =w\,d_{\log}a_1\wedge\dots \wedge d_{\log}a_N$, 
where $w\in W_M(\sigma ^{M-1}\c F)$ and all 
$a_i\in W_M(\c F)^*$. Then 
$w\in O_M(\c F)$, all $a_i\in O_M(\sigma ^{1-M}\c F)^*$ and, 
therefore, 
$\omega\in\sum _iO_M(\c F)d_{\log }t_i$. As a result, we have a 
natural $W_M(k)$-linear embedding 
$$\iota_{O_M(\c F)}: \wt\Omega(\c F,M) 
\To  O_M(\sigma^{-M+1}\c F)\otimes_{O_M(\c F)}\Omega ^N_{O_M(\c F)}.$$

This means that for any  $\omega\in\tilde\Omega (\c F,M)$, 
the image $\iota_{O_M(\c F)}(\omega)$ can be written 
uniquely as $fd_{\log }t_1\wedge\dots \wedge 
d_{\log}t_N$, where $f=\sum_{a}\gamma _at_1^{a_1}\dots t_N^{a_N}$,  
with the indices $a=(a_1,\dots ,a_N)\in (p^{-M+1}\Z )^N$ 
and the coefficients $\gamma _a\in W_M(k)$. 

\begin{definition} With above notation for any 
$\omega\in\widetilde{\Omega }(\c F,M)$, 
define its $W_M(\c F)$-residue by the relation 
$\Res_{W_M(\c F)}(\omega ):=\gamma _{(0,\dots ,0)}$. 
\end{definition}

This definition is compatible with the 
earlier definition of the $L(\c F)$-residue 
$\Res_{L(\c F)}$ in 
the following sense. If $\omega\in\Omega ^N_{O(\c F)}
\subset\Omega ^N_{L(\c F)}$ and 
$\omega\,\mathrm{mod}\, p^M$  is in  the 
image of $\wt \Omega(\c F,M)$ in 
$O_M(\sigma ^{-M+1}\c F)\otimes_{O(\c F)}
\Omega ^N_{O(\c F)}$ then $\Res _{L(\c F)}
(\omega )\in W(k)$ and 
$\Res_{L(\c F)}(\omega)\,\mathrm{mod}\,p^M=
\Res_{W_M(\c F)}(\omega \mathrm{mod}\,p^M)$. 

We now prove that the $W_M(\c F)$-residue $\Res _{W_M(\c F)}$ 
is independent of the choice 
of local parameters in $\c F$. 
Suppose $\bar t_1',\dots ,\bar t_N'$ is another system 
of local parameters of $\c F$. Consider, for all $i \in \Z$, 
the corresponding flat lifts $O'_M(\sigma ^i \c F)$ and 
the $W_M(\c F)$-residue $\Res'_{W_M(\c F)}$ defined via an analogue 
$\iota_{O'_M(\c F)}$ of $\iota_{O_M(\c F)}$.

\begin{Prop}\label{P2.1} For any 
$\omega\in\widetilde{\Omega }(\c F,M)$, 
one has
$\mathrm{Res} _{W_M(\c F)}(\omega )=
\mathrm{Res}'_{W_M(\c F)}(\omega )$.
\end{Prop}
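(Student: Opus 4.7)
The plan is to reduce, via the $W_M(k)$-linearity of both residue maps and the intrinsic description of $\tilde\Omega(\c F,M)$, to a small generating family on which both residues can be evaluated in an intrinsic, parameter-free way.

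First I would observe that $\tilde\Omega(\c F,M)$ is defined without reference to any chosen system of local parameters: it is the $W_M(\sigma^{M-1}\c F)$-submodule of $\Omega^N_{W_M(\c F)}$ generated by forms $d_{\log}a_1\wedge\dots\wedge d_{\log}a_N$ with all $a_i\in W_M(\c F)^*$. Since both $\Res_{W_M(\c F)}$ and $\Res'_{W_M(\c F)}$ are $W_M(k)$-linear, it suffices to verify the equality on such generators. Next I would exploit the product decomposition $W_M(\c F)^*=k^*\times\langle[\bar t_1]\rangle\times\dots\times\langle[\bar t_N]\rangle\times U_1$, where $U_1$ is the group of principal units. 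Because $d_{\log}[\alpha]=0$ for $\alpha\in k^*$, because $d_{\log}$ is additive on products, and because the wedge is alternating, each generator reduces (up to sign) either to a ``canonical form''
$$\omega_{\mathrm{can}}(w)\;=\;w\cdot d_{\log}[\bar t_1]\wedge\dots\wedge d_{\log}[\bar t_N],\qquad w\in W_M(\sigma^{M-1}\c F),$$
or to a ``mixed form'' involving at least one factor of the shape $d_{\log}(1+x_i)$ with $x_i$ in the maximal ideal.

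For $\omega_{\mathrm{can}}(w)$, the image under $\iota_{O_M(\c F)}$ is $w\,d_{\log}t_1\wedge\dots\wedge d_{\log}t_N$, so by definition $\Res_{W_M(\c F)}(\omega_{\mathrm{can}}(w))$ equals the constant term $\gamma_{(0,\dots,0)}$ of the expansion $w=\sum_a\gamma_a t_1^{a_1}\cdots t_N^{a_N}$. But this constant term is precisely the image of $w$ under the iterated reduction $W_M(\sigma^{M-1}\c F)\onto W_M(\sigma^{M-1}\c F^{(1)})\onto\dots\onto W_M(k)$, a map that depends only on the chain of residue fields of $\c F$ and not on a chosen system of local parameters. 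Hence the same value is obtained from $\iota_{O'_M(\c F)}$, and $\Res_{W_M(\c F)}(\omega_{\mathrm{can}}(w))=\Res'_{W_M(\c F)}(\omega_{\mathrm{can}}(w))$.

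For mixed forms, the core step is a Stokes-type lemma in this Witt-vector setting: exact forms $d\eta$ arising from a suitable $\eta\in\Omega^{N-1}_{O_M(\sigma^{-M+1}\c F)}$ must have vanishing $W_M(\c F)$-residue. Granted this, one rewrites any mixed generator, modulo exact forms, as $\omega_{\mathrm{can}}(w')$ for some $w'\in W_M(\sigma^{M-1}\c F)$, reducing to the canonical case just treated. The main obstacle is precisely the proof of this Stokes-type lemma together with the combinatorics of the reduction ``mixed form $=$ canonical form $+$ exact form''; unlike the $L(\c F)$-version, where $\Res_{L(\c F)}(d\omega')=0$ was listed as standard, here one is working modulo $p^M$ and must track carefully that the intermediate primitive $\eta$'s lie in the right submodule of $\Omega^{N-1}_{O_M(\sigma^{-M+1}\c F)}$ so that the residue operation is applicable. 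Once this bookkeeping is in place, both the canonical-form computation and the vanishing on exact forms are intrinsic, and parameter-independence follows.
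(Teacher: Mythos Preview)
Your overall strategy matches the paper's: decompose each $d_{\log}a_i$ via the unit structure of $W_M(\c F)^*$, split every generator of $\tilde\Omega(\c F,M)$ into a ``principal'' piece carrying the residue plus ``remainder'' pieces on which the residue vanishes, and reduce to the principal piece. The paper's unit decomposition is finer than yours: it writes $\alpha=[\beta]\,\underline{t}^{\,a}\varepsilon\eta$ with $\varepsilon\in(1+\m_{L(\c F)})\bmod p^M$ and $\eta\in 1+pW_{M-1}(\sigma^{-1}\c F)$. Separating out the factor $\eta$ (on which the $p$-adic logarithm converges, so that $d_{\log}\eta=d(\log\eta)$ is genuinely exact) is exactly what makes the Stokes-type step you flag go through cleanly, and it yields three types of summands rather than your two.

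There is, however, a real gap in your treatment of $\omega_{\mathrm{can}}(w)$. You correctly note that the constant term of $w\in W_M(\sigma^{M-1}\c F)$ is intrinsic, but that only gives
\[
\Res_{W_M(\c F)}\bigl(w\,d_{\log}t_1\wedge\dots\wedge d_{\log}t_N\bigr)
=\mathrm{const}(w)
=\Res'_{W_M(\c F)}\bigl(w\,d_{\log}t'_1\wedge\dots\wedge d_{\log}t'_N\bigr).
\]
These are residues of \emph{different} forms. Your $\omega_{\mathrm{can}}(w)$ is built from the $d_{\log}t_i$, and computing $\Res'$ on it forces you to rewrite $d_{\log}t_1\wedge\dots\wedge d_{\log}t_N$ in the $t'$-coordinates; the intrinsic description of $\mathrm{const}(w)$ does nothing for you there. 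The paper closes this differently: having shown $\Res_{W_M(\c F)}(\omega)=\gamma(\omega)$ for every $\omega$ (with $\gamma(\omega)$ the type-(i) coefficient in the $t$-decomposition), it then verifies directly that $\Res_{W_M(\c F)}(d_{\log}t'_1\wedge\dots\wedge d_{\log}t'_N)=1$ by running the same $t$-decomposition on each $d_{\log}t'_i$ and tracking the leading integer. That is the genuinely two-coordinate computation linking $\Res$ and $\Res'$, and it is absent from your sketch.
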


\begin{proof} 
Note that any $\alpha\in W_M(\c F)^*$ can be written in the 
form $[\beta ]\underline{t}^a\varepsilon\eta $, where 
$[\beta ]\in W_M(k)$ is the Teichm\"uller representative of $\beta\in k$, 
$\underline{t}^a:=t_1^{a_1}\dots t_N^{a_N}$ with $a=(a_1,\dots ,a_N)\in\Z ^N$, 
$\varepsilon\in (1+\m _{L(\c F)})\mathrm{mod}p^MO(\c F)$ 
and $\eta\in 1+pO_{M-1}(\sigma ^{-1}\c F)+
\dots +p^{M-1}O_1(\sigma ^{-M+1}\c F)
=1+pW_{M-1}(\sigma ^{-1}\c F)$. 

With this notation any element of $d_{\log }W_M(\c F)^*$ can be written as
$$\sum _{1\leqslant i\leqslant N}C_i d_{\log }t_i+d_{\log }\varepsilon +d\eta',$$
where $C_1,\dots ,C_N\in\Z $ and $\eta '=\log\eta\in pW_{M-1}(\sigma ^{-1}\c F)$.
Note that the $p$-adic logarithm establishes  an isomorphism 
of the multiplicative group $1+pW_{M-1}(\sigma ^{-1}\c F)$ with 
the additive group $pW_{M-1}(\sigma ^{-1}\c F)$ and 
$$d_{\log }\varepsilon =-\sum _{a}
(\gamma _a\underline{t}^{a})^nd_{\log }\underline{t}^a,$$ 
where $a=(a_1,\dots ,a_n)\in\Z ^N$, $a>\bar 0$, 
$n\geqslant 0$, all $\gamma _{a}\in W_M(k)$ and 
the sum in the right-hand side converges in the $P$-topology. 
(Use that $\varepsilon $ can be written as an infinite product 
$\prod _{a>\bar 0}(1-\gamma _a\underline{t}^a)$.)

Therefore, any element $\omega\in \wt \Omega(\c F,M)$ 
can be written as a sum of the following types of elements:

(i) $\gamma (\omega )d_{\log}t_1\wedge\dots \wedge d_{\log}t_N$ 
with $\gamma (\omega )\in W_M(k)$;

(ii) $md_{\log}t_1\wedge\dots\wedge d_{\log}t_N$ with 
$m\in\m _{L(\c F)}\mathrm{mod}p^MO(\c F)$;

(iii) $d_{\log}t_{i_1}\wedge\dots\wedge d_{\log}t_{i_s}
\wedge d(\eta_1)\wedge\dots\wedge d(\eta_{N-s})$, 
where $0\le s<N$, 
$1\leqslant i_1<\dots <i_s\leqslant N$ and 
$\eta _1,\dots ,\eta _{N-s}\in pW_{M-1}(\sigma ^{-1}\c F)$. 

This follows directly from the above description of the 
elements of $d_{\log}W_M(\c F)^*$ by taking into account 
that for $a\in\Z ^N$ such that $a\geqslant \bar 0$ 
and $\eta\in pW_{M-1}(\sigma ^{-1}\c F)$, we have  
$$(\underline{t}^{a})^n d_{\log}(\underline{t}^a)\wedge d\eta =
\sum_{1\leqslant i\leqslant N}a_id_{\log}t_i\wedge d(\underline{t}^{an}\eta),$$ 
and  $d(w\eta )=wd(\eta )$ for any $\omega\in W_{M-1}(\sigma ^{M-1}\c F)$.

Then it can be seen that $\Res_{W_M(\c F)}(\omega)=
\gamma(\omega)$ by noting that the residues of 
elements of the form (ii)-(iii) are equal to 0.

Finally, it remains to verify that 
$\Res_{W_M(\c F)}(\omega )=1$ for differential 
forms $\omega =d_{\log}t_1'\wedge\dots\wedge d_{\log}t_N'$. 
This can be done also along the 
lines of the above calculations.
\end{proof}


\subsection{The Witt symbol} \label{S2.2} 
We introduce the $W_M(\F_p)$-linear pairing 
\begin{equation}\label{E2.2}[\ ,\ \}^{\c F}_M:W_M(\c F)
\times K_N^t(\c F)\longrightarrow W_M(k)
\end{equation}
as follows. 
Suppose $w=(w_0,\dots ,w_{M-1})\in W_M(\c F)$ and 
$\alpha\in K_N(\c F)$ is of the form 
$\alpha = \{\alpha_1,\dots,\alpha_N\} \in K_N(\c F)$. 
For $1\leqslant i\leqslant N$ and $\hat\alpha_i\in W_M(\c F)^*$ 
such that $\hat\alpha_i\,\mathrm{mod}\, p=\alpha_i$, set 
\begin{equation}\label{E2.2a} [w,\alpha \}^{\c F}_M=\Res_{W_M(\c F)}
(\sigma ^{M-1}(w) d_{\log}\hat\alpha _1
\wedge\dots\wedge d_{\log}\hat\alpha _N)\in W_M(k).
\end{equation} 
It can be seen that  
$[w,\alpha\}^{\c F}_M$  is well-defined and 
the pairing it induces factors through 
the natural projection $K_N(\c F)\longrightarrow K_N^t(\c F)$. 

\begin{Lem} \label{L2.2}
For any $w\in W_M(\c F)$ and $\alpha\in K_N^t(\c F)$, 
we have 
$$\sigma [w,\alpha\}^{\c F}_M=[\sigma(w),\alpha\}^{\c F}_M.$$ 
\end{Lem}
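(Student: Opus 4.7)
The plan is to unwind the definition of the Witt symbol and reduce the identity to a concrete Frobenius symmetry for the Laurent coefficients of the logarithmic form $\eta:=d_{\log}\hat\alpha_1\wedge\cdots\wedge d_{\log}\hat\alpha_N$. Explicitly, the claim becomes
\[
\sigma\,\Res_{W_M(\c F)}\bigl(\sigma^{M-1}(w)\cdot\eta\bigr)=\Res_{W_M(\c F)}\bigl(\sigma^M(w)\cdot\eta\bigr),
\]
and both residues make sense because $\sigma^{M-1}w,\sigma^M w\in W_M(\sigma^{M-1}\c F)\subset O_M(\c F)$. The essential input is that Frobenius on $O(\c F)=W((t_N))\cdots((t_1))$ acts by $t_i\mapsto t_i^p$ (the $t_i=[\bar t_i]$ being Teichm\"uller lifts) and by the canonical Frobenius on $W(k)$-coefficients.

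I would first expand $v:=\sigma^{M-1}w=\sum_{b\in\Z^N}\nu_b\underline{t}^{p^{M-1}b}$ with $\nu_b\in W_M(k)$, so that $\sigma v=\sum_b\sigma(\nu_b)\underline{t}^{p^Mb}$, and via $\iota_{O_M(\c F)}$ write $\eta=g\,d_{\log}t_1\wedge\cdots\wedge d_{\log}t_N$ with $g=\sum_c\mu_c\underline{t}^c\in O_M(\sigma^{-M+1}\c F)$, $\mu_c\in W_M(k)$. Reading off constant terms from the definition of $\Res_{W_M(\c F)}$ yields
\[
\Res_{W_M(\c F)}(v\cdot\eta)=\sum_b\nu_b\,\mu_{-p^{M-1}b},\qquad \Res_{W_M(\c F)}(\sigma v\cdot\eta)=\sum_b\sigma(\nu_b)\,\mu_{-p^Mb}.
\]
Since $w$ ranges freely over $W_M(\c F)$ (so each $\nu_b$ can be prescribed arbitrarily in $W_M(k)$), the lemma reduces to the pointwise Frobenius symmetry $\mu_{-p^Mb}=\sigma(\mu_{-p^{M-1}b})$ for every $b\in\Z^N$; call this $(\ast)$.

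To prove $(\ast)$, I would exploit the logarithmic nature of $\eta$ by choosing the Teichm\"uller lifts $\hat\alpha_i:=[\alpha_i]$, which give $\sigma\hat\alpha_i=[\alpha_i^p]=\hat\alpha_i^p$ exactly, hence $\sigma_*\eta=\bigwedge_i d_{\log}\hat\alpha_i^p=p^N\eta$ with no correction terms. On the other hand, from the Laurent form, $\sigma_*\eta=p^N\sigma(g)\,d_{\log}t_1\wedge\cdots\wedge d_{\log}t_N$ (using $\sigma_*d_{\log}t_i=p\,d_{\log}t_i$). Comparing these and shifting by an arbitrary monomial $\underline{t}^a$ detects every Laurent coefficient: applying $\Res$ to $\underline{t}^{pa}\sigma_*\eta=\sigma_*(\underline{t}^a\eta)$ gives the identity $p^N\mu_{-pa}=p^N\sigma(\mu_{-a})$ for all $a$ in the relevant lattice, and specialising to $a=p^{M-1}b$ yields $(\ast)$ up to $p^N$-torsion.

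The main obstacle will be upgrading this comparison from $p^N\bigl(\mu_{-p^Mb}-\sigma(\mu_{-p^{M-1}b})\bigr)=0$ to the strict pointwise equality $(\ast)$. To do so one has to use the special structure of $g$ as a Jacobian-type determinant of logarithmic derivatives $d_{\log}\hat\alpha_i/d_{\log}t_j$, together with the explicit nesting $W_M(\sigma^{M-1}\c F)\subset O_M(\c F)\subset W_M(\c F)\subset O_M(\sigma^{-M+1}\c F)$ from Subsection~\ref{S2.1}, in order to eliminate the spurious $p^N$ factor. It is precisely here that the logarithmic form of $\eta$ plays its essential role.
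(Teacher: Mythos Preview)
Your reduction to the coefficient identity $(\ast)$ is correct, but the argument you sketch for $(\ast)$ does not close: you only obtain $p^N\bigl(\mu_{-p^Mb}-\sigma(\mu_{-p^{M-1}b})\bigr)=0$ in $W_M(k)$, and when $N<M$ this genuinely does not imply $(\ast)$. You acknowledge this obstacle, but the proposed remedy---exploiting the Jacobian structure of $g$ and the inclusions $W_M(\sigma^{M-1}\c F)\subset O_M(\c F)\subset\cdots$---is not a proof, and I do not see how to make it one along these lines. The identity $\sigma_*\eta=p^N\eta$ for Teichm\"uller lifts is too coarse: it tells you nothing about the individual coefficients $\mu_c$ beyond the $p^N$-torsion statement, and the Jacobian expression for $g$ involves $d_{\log}[\alpha_i]$ with $[\alpha_i]$ a Teichm\"uller lift of a general unit, which has no tractable expansion.

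The paper avoids this entirely by a change of viewpoint you are missing: since the symbols $\{\bar t_1',\dots,\bar t_N'\}$ for varying systems of local parameters generate $K_N^t(\c F)$, and since Proposition~\ref{P2.1} says $\Res_{W_M(\c F)}$ is independent of the choice of local parameters, one may simply take $\alpha=\{\bar t_1,\dots,\bar t_N\}$. Then $\eta=d_{\log}t_1\wedge\cdots\wedge d_{\log}t_N$, i.e.\ $g=1$, and $(\ast)$ is vacuous; the residue is just the constant term $\gamma_{(0,\dots,0)}$ of $\sigma^{M-1}w$, and applying $\sigma$ to $w$ sends this to $\sigma(\gamma_{(0,\dots,0)})$. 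In other words, the whole difficulty you are fighting disappears once you invoke the coordinate-independence of the residue to put $\alpha$ in standard form.
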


\begin{proof} Note that for varying systems of local 
parameters $\bar t_1',\dots ,\bar t_N'$ of $\c F$, the symbols 
$\{\bar t_1',\dots ,\bar t_N'\}$ generate the group $K^t_N(\c F)$. 
Therefore, it is sufficient to consider 
only the symbols $\alpha =\{\bar t_1',\dots ,\bar t_N'\}$. 
By Proposition \ref{P2.1}, the symbol $[\ ,\ \}_M^{\c F}$ 
is independent of the choice of local parameters. 
Therefore, we may assume that 
$\alpha =\{\bar t_1,\dots ,\bar t_N\}$.
Consider the expansion 
$\sigma^{M-1}w=
\sum_a\gamma_a t_1^{a_1}\cdots t_N^{a_N}$, 
where all $\gamma_a \in W_M(k)$, with respect to the 
identification $W_M(\sigma ^{M-1}\c F)= 
O_M(\sigma^{M-1}\c F)+\dots +p^{M-1}O_1(\c F)$. 
Clearly, $[w,\alpha)^{\c F}_M=
\gamma_{(0,\dots ,0)}$ and 
$[\sigma(w),\alpha)^{\c F}_M=\sigma(\gamma_{(0,\dots,0)})$. 
The lemma is proved.
\end{proof}

Notice now that for any $w=(w_1,\dots ,w_M)\in W_M(\c F)$,  
the element $\sigma ^{M-1}w=[w_1]^{p^{M-1}}+p^{M-1}[w_2]^{p^{M-2}}+
\dots +p^{M-1}[w_M]$ 
coincides, modulo $p^M$, with 
the $M$-th ghost component of $w$. 
 Therefore, the classical Witt symbol, cf. \cite{Pa1},
$$[\ ,\ )^{\c F}_M:W_M(\c F)\times K_N^t(\c F)
\longrightarrow W_M(\F_p)$$
has the following invariant form: 
\medskip 

--- {\it If $w\in W_M(\c F)$ and $\alpha\in K_N^t(\c F)$ then 
$[w,\alpha )^{\c F}_M =\mathrm{Tr}\left ([w,\alpha \}^{\c F}_M\right )$.}
\medskip 

Above Lemma \ref{L2.2} implies that the Witt symbol induces 
a $W_M(\F _p)$-linear pairing 
\begin{equation}\label{E2.2b} 
W_M(\c F)/(\sigma -\mathrm{id})W_M(\c F)\ \times\ K_N^t(\c F)
\longrightarrow W_M(\F _p)
\end{equation} 
and it can be verified that 
this pairing is non-degenerate using the explicit formula 
\eqref{E2.2a} for the above symbol $[\ ,\ \}^{\c F}_M$.

\subsection{Coleman's lifts and Fontaine's pairing} \label{S2.3}

For 1-dimensional local fields, Fontaine \cite{Fo} developed a version 
of the Witt symbol by definig a special multiplicative section  
$\mathrm{Col}:\c F^*\longrightarrow O(\c F)^*$ of the natural projection 
$O(\c F)\longrightarrow O(\c F)/p=\c F$. 
 His construction can be generalised in the context of 
topological $K$-groups as follows. 

For any $x\in O_{L(\c F)}$, let $\bar x=(xp^{-v_p(x)})\,\mathrm{mod}\,p\in\c F$. 
Consider the map $\Pi: K_N^t(L(\c F)) \to K_N^t(\c F)$ defined by 
the correspondences 
$$\{x_1,\dots,x_N\} \mapsto 
\{\bar x_1,\dots,\bar x_N\}.$$
We use the free topological generators of 
$K_N^t(\c F)$ from Subsection \ref{S2.2} 
to define the $P$-continuous homomorphism 
$\mathrm{Col}:K_N^t(\c F)\longrightarrow K^t_N(L(\c F))$ by the 
following correspondences: 
$\{\bar t_1,\dots,\bar t_N\}\mapsto \{t_1,\dots,t_N\}$ and 
$\varepsilon _{ja} \mapsto\{1+[\theta _j]\underline{t}^a,t_1,
\dots,t_{i(a)-1},t_{i(a)+1},\dots,t_N\}$. 
This definition makes sense because of the following property, cf. \cite{Zh1}. 

\begin{Lem} \label{L2.3} If for $p$-adic integers $A_{ja}\in\Z _p$, 
where $1\leqslant j\leqslant s$ and 
$a\in\Z ^N\setminus p\Z ^N$, $a>\bar 0$, the product 
 $\prod _{ja}(1+\theta _j\underline{\bar t}^a)^{A_{ja}}$ 
converges in $\c F$ then the product 
$\prod _{ja}(1+[\theta _j]\underline{t}^a)^{A_{ja}}$ 
converges in $L(\c F)$. 
\end{Lem}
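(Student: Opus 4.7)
The plan is to reduce convergence in $L(\c F)$ to convergence modulo $p^M$ in $O_M(\c F)$ for every $M\geqslant 1$, exploiting $p$-adic completeness of $O(\c F)$. The base case $M=1$ is immediate: the reduction modulo $p$ of each lifted factor $(1+[\theta_j]\underline{t}^a)^{A_{ja}}$ equals $(1+\theta_j\underline{\bar t}^a)^{A_{ja}}$, so the mod-$p$ reduction of the lifted product is precisely the convergent product in $\c F$ given by hypothesis.

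For general $M$, I would write the base-$p$ expansion $A_{ja}=\sum_{i\geqslant 0}A^{(i)}_{ja}p^i$ with digits $0\leqslant A^{(i)}_{ja}<p$ and decompose each factor as $\prod_{i\geqslant 0}(1+[\theta_j]\underline{t}^a)^{A^{(i)}_{ja}p^i}$. In characteristic $p$, Frobenius collapses this to $\prod_i(1+\theta_j^{p^i}\underline{\bar t}^{p^i a})^{A^{(i)}_{ja}}$, and the convergence in $\c F$ then reads: for each $b\in\Z^N$, only finitely many tuples $(j,a,i,n)$ with $n\in\{0,\dots,A^{(i)}_{ja}\}$ and $\sum n\cdot p^i a=b$ give a nontrivial contribution. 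In $L(\c F)$ the corresponding factor $(1+[\theta_j]\underline{t}^a)^{A^{(i)}_{ja}p^i}$ expands to $\sum_m\binom{A^{(i)}_{ja}p^i}{m}[\theta_j]^m\underline{t}^{ma}$ and thus carries additional intermediate monomials $\underline{t}^{ma}$ with $m$ not divisible by $p^i$; however, by Kummer's theorem $v_p\!\bigl(\binom{A^{(i)}_{ja}p^i}{m}\bigr)\geqslant i-v_p(m)\geqslant 1$ for such $m$, so each intermediate choice contributes at least one factor of $p$.

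Consequently, a contribution to the coefficient of $p^c\underline{t}^b$ for $0\leqslant c<M$ in the partial product can use at most $c$ intermediate choices, the remaining non-intermediate choices being exactly the ones accounted for by the $\c F$-combinatorics. The hypothesised $\c F$-convergence then bounds the base combinatorics, and at each level $c<M$ the intermediate refinements are controlled both in number (at most $c$ of them) and in range (each from a finite set of residues modulo $p^i$). Putting these together yields finiteness of contributions to each coefficient in $O_M(\c F)$, hence convergence modulo $p^M$ and, passing to the $p$-adic limit, convergence in $L(\c F)$. The main obstacle is precisely this final step: one must carefully bookkeep the interaction between the support rays $\{na\colon n\geqslant 0\}$, the base-$p$ digits of the $A_{ja}$, and the $p$-adic valuations of binomial coefficients, to see that the extra intermediate terms in characteristic zero --- absent in $\c F$ due to Frobenius --- do not break the finiteness of contributions at any fixed $(c,b)$.
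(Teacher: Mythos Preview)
The paper does not prove Lemma~2.3; it is stated with the parenthetical ``cf.\ \cite{Zh1}'' and no argument is given. So there is no in-paper proof to compare your attempt against --- the result is delegated to Zhukov's treatment of the $P$-topology.

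Your strategy --- reduce to $O_M(\c F)$ for each $M$ via $p$-adic completeness, expand exponents in base~$p$, and control the extra characteristic-zero ``intermediate'' binomial terms by the bound $v_p\binom{Ap^i}{m}\geqslant i-v_p(m)$ --- is a sound direct approach, and the valuation estimate you quote is correct. You are also honest that the crux is the final bookkeeping. Two places need tightening before this becomes a proof. First, your reformulation of $\c F$-convergence as ``for each $b$, only finitely many tuples $(j,a,i,n)$ contribute'' is not yet justified: $P$-topological convergence of an infinite product means coefficient-wise stabilisation of partial products (Proposition~1.1), and passing from that to a clean finiteness statement on the support data requires some care, particularly for $N>1$ where the lexicographic order on $\Z^N$ does not interact simply with addition. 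Second, once you fix a target $p^c\underline{t}^b$ with $c<M$, you must argue not merely that at most $c$ intermediate choices occur, but that the remaining non-intermediate choices, which land at a \emph{shifted} exponent $b'$ (namely $b$ minus the intermediate contributions), are still controlled by the $\c F$-hypothesis; making this shift explicit is what would drive an induction on $c$. Neither point looks like a genuine obstruction --- the approach should go through --- but as written the proposal is a plausible sketch rather than a complete argument.
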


The above defined morphism $\mathrm{Col}$ depends on the choice 
of local parameters $\wb t_1,\dots,\wb t_N$ of $\c F$. 
As in Subsection \ref{S2.1} consider the lift $O(\sigma ^{-1}\c F)$. 
Then $L(\sigma ^{-1}\c F)=\mathrm{Frac}\, O(\sigma ^{-1}\c F)$ is 
a field extension of $L(\c F)$ of degree $p^M$. Let $\sigma ^{-1}$ be 
the $\sigma ^{-1}$-linear (with respect to the 
$W(k)_{\Q _p}$-module structure) field isomorphism 
$L(\c F)\longrightarrow L(\sigma ^{-1}\c F)$  
given, for $1\leqslant i\leqslant N$, by the correspondences 
$t_i\longrightarrow t_i^{1/p}$. 

\begin{definition} 
Call an element $x \in K^t_N(L(\c F))$   
Coleman if the norm map from 
$K^t_N(L(\sigma^{-1}\c F))$ to $K^t_N(L(\c F))$ 
maps $\sigma^{-1}(x)$ to $x$. 
\end{definition}

\begin{Prop} \label{P2.4} 
An element $\eta\in K_N^t(L(\c F))$ is Coleman if and 
only if it belongs to $\mathrm{Col}(K_N^t(\c F))$.
\end{Prop}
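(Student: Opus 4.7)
The plan is to prove the two implications separately. For the ``if'' direction, since $\mathrm{Col}$ is $P$-continuous by construction and both the norm $N_{L(\sigma^{-1}\c F)/L(\c F)}$ and the $\sigma^{-1}$-linear isomorphism $\sigma^{-1}:L(\c F)\To L(\sigma^{-1}\c F)$ are sequentially $P$-continuous, it suffices to verify the Coleman property on the topological generators of $K_N^t(\c F)$ listed in Subsection \ref{S1.3}. For the basic symbol $\{\bar t_1,\dots,\bar t_N\}$ one has $\mathrm{Col}\{\bar t_1,\dots,\bar t_N\}=\{t_1,\dots,t_N\}$, and $\sigma^{-1}$ sends this to $\{t_1^{1/p},\dots,t_N^{1/p}\}\in K_N^t(L(\sigma^{-1}\c F))$. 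The multilinear identity $p^N\{t_1^{1/p},\dots,t_N^{1/p}\}=\{t_1,\dots,t_N\}$ combined with property (3) of the norm from Subsection \ref{S1.4} (which gives $N\circ i_{L(\sigma^{-1}\c F)/L(\c F)}=[L(\sigma^{-1}\c F):L(\c F)]$) then yields
\[
N_{L(\sigma^{-1}\c F)/L(\c F)}\{t_1^{1/p},\dots,t_N^{1/p}\}=\{t_1,\dots,t_N\}
\]
after controlling the (trivial) $p$-adic torsion in $K_N^t$. The same strategy, applied entrywise, handles each generator $\mathrm{Col}(\varepsilon_{ja})$: one picks the natural $\sigma^{-1}$-preimage $1+[\theta_j]\underline t^{a/p}$ of each unit entry and computes its norm back to $L(\c F)$ via the conjugate product, using \eqref{E1.3'} to simplify.

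For the ``only if'' direction, the defining formulas for $\mathrm{Col}$ on the free topological generators show immediately that $\Pi\circ\mathrm{Col}=\mathrm{id}$ on $K_N^t(\c F)$, yielding a topological direct-sum decomposition
\[
K_N^t(L(\c F))=\mathrm{Col}(K_N^t(\c F))\oplus\ker\Pi.
\]
Given a Coleman element $\eta$, I would write $\eta=\mathrm{Col}(\Pi(\eta))+\eta'$ with $\eta'\in\ker\Pi$; by the first part $\mathrm{Col}(\Pi(\eta))$ is Coleman, hence so is $\eta'$. The proposition therefore reduces to the rigidity claim that the only Coleman element in $\ker\Pi$ is zero.

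To establish this rigidity I would filter $\ker\Pi$ by $p$-adic depth: let $\mathrm{Fil}^m\subset\ker\Pi$ be the $P$-closure of the subgroup generated by symbols at least one of whose entries has the form $1+p^mx$ with $x\in O_{L(\c F)}$ and the remaining entries in $O_{L(\c F)}^\times$. Set $\Psi(\eta):=N_{L(\sigma^{-1}\c F)/L(\c F)}\bigl(\sigma^{-1}\eta\bigr)$. Using \eqref{E1.3'} and the compatibility of the norm with the subgroups $U^c_K(K_N^t(\cdot))$ recalled in Subsection \ref{S1.4}, one checks that $\Psi$ sends $\mathrm{Fil}^m\cap\ker\Pi$ into $\mathrm{Fil}^{m+1}$: the $\sigma^{-1}$ step raises $p$-adic depth through $p$-th root extensions in the $t_i$-directions (while fixing $p$), and the subsequent norm, being from an extension totally ramified only in the $t_i$-directions, preserves this $p$-adic gain. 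Since $\bigcap_m\mathrm{Fil}^m=\{0\}$ by $P$-separatedness, any fixed point $\eta'=\Psi(\eta')$ inside $\ker\Pi$ satisfies $\eta'=\Psi^k(\eta')\in\mathrm{Fil}^k$ for every $k$, so $\eta'=0$.

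The main obstacle will be the rigidity step. One has to make rigorous the claim that the norm from $L(\sigma^{-1}\c F)$ to $L(\c F)$ does not wash out the $p$-adic gain produced by $\sigma^{-1}$, which requires tracking the explicit $P$-topological generators of $K_N^t(L(\c F))$ and analysing carefully how the depth filtration interacts with multilinear manipulations in the Milnor $K$-group. This is the step that genuinely upgrades Fontaine's one-dimensional construction from \cite{Fo} to the $N$-dimensional setting, and is where the topological refinement of Milnor $K$-theory plays an essential role.
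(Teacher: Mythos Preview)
Your overall architecture matches the paper's: prove the ``if'' direction on generators, then for ``only if'' reduce to $\ker\Pi$ via $\eta\mapsto\eta-\mathrm{Col}(\Pi(\eta))$ and show that the only Coleman element in $\ker\Pi$ is zero. The differences, and the real gap, are in the details.

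For the ``if'' direction, your detour through property (3) and an appeal to ``trivial $p$-adic torsion'' is both unnecessary and unjustified: you never check that $K_N^t(L(\c F))$ is $p$-torsion-free. The paper simply invokes property (1). Concretely, factor $L(\sigma^{-1}\c F)/L(\c F)$ as a tower adjoining the $t_i^{1/p}$ one at a time; for $\{t_1,\dots,t_N\}$ each step norms $t_i^{1/p}$ to $t_i$, and for $\mathrm{Col}(\varepsilon_{ja})$ one adjoins $t_{i(a)}^{1/p}$ last and computes $N(1+[\theta_j^{1/p}]\underline{t}^{a/p})=1+[\theta_j]\underline{t}^a$ directly from the minimal polynomial $X^p-t_{i(a)}$. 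No torsion argument is needed.

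The genuine gap is your rigidity step. Your claim that ``the $\sigma^{-1}$ step raises $p$-adic depth'' is false: $\sigma^{-1}$ fixes $p$ and acts only in the $t_i$-directions, so it does not move anything with respect to your filtration $\mathrm{Fil}^m$. The gain has to come from the norm, and what the norm actually produces is a factor of $p$ in the \emph{multiplicative} sense, not membership in $1+p^{m+1}O$. For instance, for the discrete generator $y=\{p,t_1,\dots,t_{N-1}\}\in\ker\Pi$ one gets $\Psi(y)=p\cdot y$, and there is no reason for $p\{p,t_1,\dots,t_{N-1}\}$ to lie in your $\mathrm{Fil}^1$. So $\Psi(\mathrm{Fil}^m\cap\ker\Pi)\subset\mathrm{Fil}^{m+1}$ fails already at the first step.

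The paper replaces your filtration by plain $p$-divisibility. One separates the topological generators of $K_N^t(L(\c F))$ into those lying in $\ker\Pi$ and those in $\mathrm{Col}(K_N^t(\c F))$, and uses property (1) to check that for every generator $y$ of $\ker\Pi$ one has $\Psi(y)\in pK_N^t(L(\c F))$. The second, more delicate, ingredient is that $\ker\Pi$ is $p$-saturated: if $x\in\ker\Pi$ and $x=p^mx_1$, then one may choose $x_1\in\ker\Pi$. This uses the topological \emph{freeness} of $K_N^t(\c F)$ (so that $\Pi$ admits the splitting you already wrote down, and the direct-sum decomposition is compatible with $p$-multiplication). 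Combining the two, a Coleman $x_0\in\ker\Pi$ with $x_0=p^mx_1$, $x_1\in\ker\Pi$, satisfies $x_0=\Psi(x_0)=p^m\Psi(x_1)\in p^{m+1}K_N^t(L(\c F))$, hence $x_0$ is infinitely $p$-divisible and therefore zero in $K_N^t$. Replace your $\mathrm{Fil}^m$ by $p^mK_N^t(L(\c F))$ and add the $p$-saturation argument, and your outline becomes the paper's proof.
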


\begin{proof}
Property (1) of Subsection \ref{S1.4} easily implies that 
all elements from $\mathrm{Col}(K_N^t(\c F))$ are Coleman. 

Suppose $x_0\in K_N^t(L(\c F))$ is Coleman. 
We prove that $x_0\in\mathrm{Col}K_N^t(\c F)$.
Shifting $x_0$ by the inverse to $\mathrm{Col}(\Pi (x_0))$ we may assume that 
$\Pi (x_0)=0$.

Note that $L(\c F)$ has the system of local parameters 
$t_0=p, t_1=\mathrm{Col}(\bar t_1),\dots ,t_N=\mathrm{Col}(\bar t_N)$.
Then the classical identity \eqref{E1.3c} implies that $K^t_N(L(\c F))$ 
is topologically generated by the elements 
$$\{t_0,\dots ,t_{i-1},t_{i+1},\dots ,t_N\}$$ 
with $1\leqslant i\leqslant N$, and the 
elements of the form 
$$\{1+[\theta _j]t_0^{a_0}\dots t_N^{a_N},
\alpha _2,\dots ,\alpha _N\},$$
where $1\leqslant j\leqslant s$, $\theta _1,\dots ,
\theta _s$ is an $\F _p$-basis of $k$, $a=(a_0,\dots ,a_N)
\in\Z ^{N+1}\setminus p\Z ^{N+1}$, 
$a>\bar 0$ and for $2\leqslant i\leqslant N$, 
$\alpha _i=t_{j_i}$ with 
$0\leqslant j_2<j_3<\dots <j_N\leqslant N$. 

These generators can be separated into the two following groups:

--- the first group contains the generators belonging to 
$\mathrm{Ker}\, \Pi $ (in other words these generators do depend on $t_0$);

--- the second group contains the generators from 
$\mathrm{Col}(K_N^t(\c F)).$

Using that $K_N^t(\c F)$ is topologically free, we obtain for any $x\in K_N^t(\c F)$
the following properties:

--- if $\Pi (x)=0$ then $x$ is  
a product of generators from the first group;

--- if $\Pi (x)=0$ and $x=p^mx_1$ with $m\geqslant 0$ 
and $x_1\in K_N^t(L(\c F))$, then $\Pi (x_1)=0$.

Returning to the Coleman element $x_0\in K_N^t(L(\c F))$, assume that 
there is an $m\geqslant 0$ such that $x_0=p^mx_1$ with 
$x_1\in K_N^t(L(\c F))$ but $x\notin pK_N^t(L(\c F))$.
Then $\Pi (x_1)=0$ and $x_1$ is a product of generators 
from the first group. But if $y$ is a generator from this group then 
property 1) of Subsection \ref{S1.4} implies that 
$N_{L(\sigma ^{-1}\c F)/L(\c F)}(\sigma ^{-1}y)\in pK_N^t(L(\c F))$. 
This gives that 
$x_0=N_{L(\sigma ^{-1}\c F)/L(\c F)}(\sigma ^{-1}x_0)=
p^mN_{L(\sigma ^{-1}\c F)/L(\c F)}(\sigma ^{-1}x_1)$ 
$\in p^{m+1}K_N^t(L(\c F))$, which is a  
contradiction. This means that $x_0$ is infinitely $p$-divisible and, 
therefore, is $0$ in $K_N^t(L(\c F))$.
\end{proof}

We define an analogue of Fontaine's pairing \cite{Fo} 
$$[\ ,\ \rangle^{\c F}: O(\c F) 
\times K_N^t(\c F) \longrightarrow \Z_p$$ 
by  setting for $f \in O(\c F)$ and 
$\alpha\in K_N(\c F)$,
$$[f,\alpha\rangle ^{\c F}=
\Tr\big(\Res_{L(\c F)} f d_{\log} \mathrm{Col}(\alpha)\big).$$ 
Here, for $\mathrm{Col}(\alpha)=\{\hat\alpha_1,\dots,\hat\alpha_N\}$, we set  $d_{\log}\mathrm{Col}(\alpha)=d_{\log}\hat\alpha_1
\wedge\dots\wedge d_{\log}\hat\alpha_N$. 
This pairing is related to the Witt symbol by the following Proposition. 
\begin{Prop} \label{P2.5}
For all $f \in O(\c F)$ and $\alpha \in K_N^t(\c F)$, one has 
$$[f,\alpha\rangle^{\c F}\mathrm{mod}\,p^M=
[f\,\mathrm{mod}\,p^M,\alpha\,\mathrm{mod}\,p^M)^{\c F}_M.$$
\end{Prop}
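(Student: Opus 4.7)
The plan is to unwind both sides and reduce the proposition to a single identity of residues in $L(\c F)$, then invoke the compatibility between $\Res_{L(\c F)}$ and $\Res_{W_M(\c F)}$ established just before Proposition \ref{P2.1}. By bilinearity and $P$-continuity of both Fontaine's pairing and the Witt symbol in the second argument, it suffices to verify the identity when $\alpha$ is one of the topological generators of $K_N^t(\c F)$ from Subsection \ref{S1.3}, that is, either $\alpha=\{\bar t_1,\dots,\bar t_N\}$ (so $\mathrm{Col}(\alpha)=\{t_1,\dots,t_N\}$) or $\alpha=\varepsilon_{ja}$. Throughout, write $\omega_0:=d_{\log}\mathrm{Col}(\alpha)$.

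The central technical claim is the identity
$$\Res_{L(\c F)}\bigl(\sigma^{M-1}(f)\,\omega_0\bigr)=\sigma^{M-1}\bigl(\Res_{L(\c F)}(f\,\omega_0)\bigr).$$
For $\alpha=\{\bar t_1,\dots,\bar t_N\}$ this is immediate: expanding $f=\sum_{a} [\gamma_a]p^{a_0}\underline t^{a'}$, both sides extract the coefficient of $\underline t^{\bar 0}$ (twisted by $\sigma^{M-1}$ on the coefficients), and the $p^{M-1}$-dilation of exponents produced by $\sigma^{M-1}$ is harmless at exponent $\bar 0$. For $\alpha=\varepsilon_{ja}$, one computes
$$\omega_0=\pm a_{i(a)}\cdot\frac{[\theta_j]\underline t^a}{1+[\theta_j]\underline t^a}\,d_{\log}t_1\wedge\dots\wedge d_{\log}t_N=\pm a_{i(a)}\sum_{m\geqslant 1}(-1)^{m-1}[\theta_j^m]\underline t^{ma}\,d_{\log}t_1\wedge\dots\wedge d_{\log}t_N,$$
the geometric expansion being $P$-convergent since $a>\bar 0$. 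Comparing the coefficient of $\underline t^{-ma}$ in $f$ and in $\sigma^{M-1}(f)=\sum_{a}\sigma^{M-1}([\gamma_a])p^{a_0}\underline t^{p^{M-1}a'}$ yields the key arithmetic input: since $a\in\Z^N\setminus p\Z^N$, some component $a_i$ is coprime to $p$, hence $p^{M-1}\mid ma$ componentwise iff $p^{M-1}\mid m$. Thus only the indices $m=p^{M-1}m'$ contribute on the left, and one term-by-term check (using $[\theta_j^{p^{M-1}m'}]=\sigma^{M-1}[\theta_j^{m'}]$ and the parity equality $(-1)^{p^{M-1}m'-1}=(-1)^{m'-1}$, valid for odd $p$) completes the identification.

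Apply now $\Tr$ and use that $\Tr\circ\sigma^{M-1}=\Tr$ on $W(k)$, obtaining
$$[f,\alpha\rangle^{\c F}=\Tr\Res_{L(\c F)}(f\,\omega_0)=\Tr\Res_{L(\c F)}\bigl(\sigma^{M-1}(f)\,\omega_0\bigr).$$
Since $\sigma^{M-1}(f)\in O(\sigma^{M-1}\c F)$, its reduction modulo $p^M$ lies in $O_M(\sigma^{M-1}\c F)\subset W_M(\sigma^{M-1}\c F)$; hence $\sigma^{M-1}(f)\,\omega_0\mathrm{\,mod\,}p^M$ lies in the image of $\widetilde\Omega(\c F,M)$. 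Applying the compatibility between the $L(\c F)$- and $W_M(\c F)$-residues stated before Proposition \ref{P2.1} gives
$$\Res_{L(\c F)}\bigl(\sigma^{M-1}(f)\,\omega_0\bigr)\mathrm{\,mod\,}p^M=\Res_{W_M(\c F)}\bigl(\sigma^{M-1}(f\mathrm{\,mod\,}p^M)\cdot\omega_0\mathrm{\,mod\,}p^M\bigr)=[f\mathrm{\,mod\,}p^M,\alpha\mathrm{\,mod\,}p^M\}^{\c F}_M.$$
Taking $\Tr$ yields the Witt symbol on the right and completes the proof.

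The main obstacle is the residue identity in the generator case $\alpha=\varepsilon_{ja}$: the whole mechanism is driven by the coprimality condition $a\notin p\Z^N$, which is precisely what aligns the $p$-divisibility of the monomial exponents arising in the geometric series expansion of $\omega_0$ with the $p^{M-1}$-Frobenius dilation on $f$.
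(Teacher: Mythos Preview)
Your proof is correct and follows essentially the same route as the paper: both reduce to the topological generators $\{\bar t_1,\dots,\bar t_N\}$ and $\varepsilon_{ja}$, and both establish the key residue identity $\sigma^{M-1}\Res_{L(\c F)}(f\,\omega_0)=\Res_{L(\c F)}(\sigma^{M-1}(f)\,\omega_0)$ via the coprimality condition $a\notin p\Z^N$. The only cosmetic difference is that the paper further reduces $f$ to monomials $\underline{t}^b$ before comparing, whereas you argue directly with the coefficient of $\underline t^{-ma}$ in a general $f$; your explicit invocation of $\Tr\circ\sigma^{M-1}=\Tr$ and of the $\Res_{L(\c F)}$/$\Res_{W_M(\c F)}$ compatibility makes the logical flow a bit more transparent than the paper's version.
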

\begin{proof}
We need to show that 
\begin{equation}\label{E2.3}\sigma ^{M-1}
\Res _{L(\c F)}(fd_{\log }\mathrm{Col}\alpha )=
\Res _{L(\c F)}(\sigma ^{M-1}(f)d_{\log }\mathrm{Col}(\alpha )).
\end{equation}

By linearity and $P$-continuity this can be verified on the 
generating elements 
$f=\underline{t}^{b}=t_1^{b_1}\dots t_N^{b_N}$, 
$b=(b_1,\dots ,b_N)\in\Z ^N$, of $O(\c F)$ and the generators 
$\alpha =\{\bar t_11,\dots ,\bar t_N\}$ and 
$\alpha =\varepsilon _{ja}$ of $K_N^t(\c F)$ 
from \eqref{E2.3} of Subsection \ref{S1.3}.

--- {\it The case $f=\underline{t}^b$ and $\alpha =
\{\bar t_1,\dots ,\bar t_N\}$}. In this case 
the both sides of equality \eqref{E2.3} are equal to the Kronecker 
symbol $\delta (b,\bar 0)$.

--- {\it The case $f=\underline{t}^b$ and $\alpha =\varepsilon _{ja}$}. 
Here the left-hand side of 
\eqref{E2.3} equals 
$$(-1)^{i(a)-1}\sigma ^{M-1}\Res _{L(\c F)}
\left (\underline {t}^b\sum _{n\geqslant 0}
(-1)^n[\theta _j^n]\underline{t}^{na}
\frac{dt_1}{t_1}\wedge\dots\wedge\frac{dt_N}{t_N}\right )$$
and the corresponding right-hand side equals 
$$(-1)^{i(a)-1}\Res _{L(\c F)}
\left (\underline {t}^{p^{M-1}b}
\sum _{n\geqslant )}(-1)^n[\theta _j^n]\underline{t}^{na}
\frac{dt_1}{t_1}\wedge\dots\wedge\frac{dt_N}{t_N}\right ).$$

Clearly, we may assume that $b\ne\bar 0$. Then the left-hand side 
is non-zero if and only if there is an $n_0\geqslant 1$ 
such that $b+n_0a=\bar 0$. This is equivalent to saying 
that the right-hand side is non-zero noting that  
$a\not\in p\Z ^N$. It can then be seen that 
 both sides are equal to 
$(-1)^{i(a)+n_0-1}a_{i(a)}[\theta _j^{p^{M-1}n_0}]$.
\end{proof}


\section{Vostokov's pairing}\label{S3} 

As usual, $\pi _1,\dots ,\pi _N$ is a fixed system of local parameters  
 and $k$ is the $N$-th residue field of $F$. 
Let $L_0(\c F)=W(k)((t_N))\dots ((t_1))\subset L(\c F)$ with the induced 
topological structure. Set 
$$\m ^0=\left\{\sum_{a>\bar 0}w_a\underline{t}^a\ |\ w_a\in W(k)\right\}$$
and $O^0=W(k)+\m ^0$. Clearly, 
$\m ^0\subset O^0\subset L_0(\c F)$ and $L_0(\c F)=
\underset{a>\bar 0}\bigcup \underline{t}^{-a}O^0$. 
Let $\c R$ be the multiplicative subgroup in 
$L_0(\c F)^*$ generated by the Teichmuller representatives of the elements of $k$, 
the indeterminants $t_1,\dots ,t_N$ and the elements of $1+\m ^0$. 
Let $\kappa :\c R\longrightarrow F$ be the epimorphic 
continuous morphism of $W(k)$-algebras such that 
$\kappa (t_i)=\pi _i$, where $1\leqslant i\leqslant N$. 
We use the same notation $\kappa $ for the unique 
$P$-continuous epimorphism of $W(k)$-algebras 
$L_0(\c F)\longrightarrow F$ such that $\kappa (t_i)=
\pi _i$ for $1\leqslant i\leqslant N$.

\subsection{The differential form $\Omega $} \label{S3.1}
For any $u_0,\dots ,u_N\in \c R$, denote by 
$\Omega =\Omega (u_0,\dots ,u_N)$ the following differential 
form from $\Omega ^N_{L_0(\c F)}$:
$$\underset{0\leqslant i\leqslant N}\sum (-1)^if_i
\left (\frac{\sigma }{p}d_{\log }u_0\right )\wedge\dots\wedge
\left (\frac{\sigma }{p}d_{\log}u_{i-1}\right )\wedge d_{\log}u_{i+1}
\wedge\dots\wedge d_{\log}u_N$$
where for $0\leqslant i\leqslant N$, $f_i=(1/p)\log (u_i^p/\sigma u_i)$. 
Notice that all $f_i\in\m ^0$ (use that 
$\sigma u_i/u_i^p\in 1+\m ^0$) 
and 
\begin{equation} \label{E3.1}
df_i=d_{\log}u_i-(\sigma /p)d_{\log }u_i. 
 \end{equation}

\begin{Prop} \label{*3.1} $\Omega\,\mathrm{mod}\,
d\Omega ^{N-1}_{O^0}$ is skew symmetric in 
 $u_0,\dots ,u_N$.
\end{Prop}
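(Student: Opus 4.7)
The plan is to reduce skew-symmetry to the case of a single adjacent transposition $u_k\leftrightarrow u_{k+1}$, since adjacent swaps generate $S_{N+1}$ and the sign of any permutation matches the parity of such swaps. So I would establish, for each $0\leqslant k<N$, that
$$\Omega(u_0,\dots,u_k,u_{k+1},\dots,u_N)+\Omega(u_0,\dots,u_{k+1},u_k,\dots,u_N)\in d\,\Omega^{N-1}_{O^0}.$$

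Write $\alpha_j:=d_{\log}u_j$ and $\beta_j:=(\sigma/p)d_{\log}u_j$; both are closed, and relation \eqref{E3.1} reads $df_j=\alpha_j-\beta_j$. Compare the two sides term by term in the summation index $i$. For $i\geqslant k+2$ the term contains the adjacent wedge block $\beta_k\wedge\beta_{k+1}$, and for $i\leqslant k-1$ it contains $\alpha_k\wedge\alpha_{k+1}$; in either case swapping $u_k$ and $u_{k+1}$ merely transposes the two factors of the block, and these contributions cancel pairwise. The four surviving contributions from $i\in\{k,k+1\}$ assemble into
$$(-1)^k(\beta_0\wedge\cdots\wedge\beta_{k-1})\wedge\bigl[f_k(\alpha_{k+1}-\beta_{k+1})+f_{k+1}(\alpha_k-\beta_k)\bigr]\wedge(\alpha_{k+2}\wedge\cdots\wedge\alpha_N).$$
Applying \eqref{E3.1}, the bracket collapses to $f_k\,df_{k+1}+f_{k+1}\,df_k=d(f_kf_{k+1})$, so the total expression equals
$$d\bigl(f_kf_{k+1}\cdot\beta_0\wedge\cdots\wedge\beta_{k-1}\wedge\alpha_{k+2}\wedge\cdots\wedge\alpha_N\bigr),$$
pulling $d$ outside by virtue of $d\alpha_j=d\beta_j=0$.

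The main obstacle is to confirm that this $(N-1)$-form actually lies in $\Omega^{N-1}_{O^0}$. The factor $f_kf_{k+1}\in\m^0\cdot\m^0$ carries the required ``smallness''; combining this with the identity $\beta_j-\alpha_j=-df_j\in\Omega^1_{O^0}$ and the decomposition $d_{\log}u=\sum_la_ld_{\log}t_l+d_{\log}v$ for a generator $u=\zeta\underline{t}^av\in\c R$ (with $v\in1+\m^0$), the verification reduces to controlling the pole contributions $f_kf_{k+1}\cdot\bigwedge_l d_{\log}t_{j_l}$ against the ring $O^0$. This bookkeeping in the $P$-topological framework is where the real technical weight sits; the algebraic heart of the statement, however, is the telescoping identity $f_k\,df_{k+1}+f_{k+1}\,df_k=d(f_kf_{k+1})$, which encodes the relationship between the $f_j$'s and their ``Frobenius correction'' terms $\alpha_j,\beta_j$.
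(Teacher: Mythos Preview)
Your approach is essentially identical to the paper's: reduce to adjacent transpositions $u_k\leftrightarrow u_{k+1}$, observe that the terms with $i\notin\{k,k+1\}$ cancel exactly (the swap merely permutes two adjacent factors in a wedge), and collapse the four surviving terms via $f_k\,df_{k+1}+f_{k+1}\,df_k=d(f_kf_{k+1})$ using \eqref{E3.1}. The paper records the very same $1$-form identity
\[
f_k\alpha_{k+1}-f_{k+1}\beta_k+f_{k+1}\alpha_k-f_k\beta_{k+1}=d(f_kf_{k+1})
\]
(in your $\alpha/\beta$ notation) and draws the same conclusion; its displayed form is exactly your $(-1)^kB\wedge d(f_kf_{k+1})\wedge A$ up to an indexing typo.

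Your closing worry --- whether $f_kf_{k+1}\,B\wedge A$ lies in $\Omega^{N-1}_{O^0}$ --- is legitimate but lighter than you make it sound, and the paper dispatches it in a single clause. The point is that for any $u\in\c R$ one has $d_{\log}u\in\sum_i O^0\,d_{\log}t_i$, and since $(\sigma/p)d_{\log}t_i=d_{\log}t_i$ the same holds for $(\sigma/p)d_{\log}u$. Hence every $\alpha_j$ and $\beta_j$ already has $O^0$-coefficients in the logarithmic basis $d_{\log}t_1,\dots,d_{\log}t_N$, so $f_kf_{k+1}\,B\wedge A$ sits in $\sum O^0\,d_{\log}t_{i_1}\wedge\dots\wedge d_{\log}t_{i_{N-1}}$ with no further pole bookkeeping needed. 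That $O^0$-boundedness in the log basis is precisely what is consumed downstream in Lemma~\ref{L3.5}; the ``real technical weight'' you anticipate does not materialise.
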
 

\begin{proof} Prove that 
$\Omega\,\mathrm{mod}\,d\Omega _{O^0}^{N-1}$ changes the sign 
under the transpositions $u_i\leftrightarrow u_{i+1}$, $0\leqslant i<N$. 
Consider the identity (use \eqref{E3.1})
$$f_id_{\log}u_{i+1}-f_{i+1}(\sigma /p)d_{\log }u_i+
f_{i+1}d_{\log }u_i-f_i(\sigma /p)d_{\log }u_{i+1}=d(f_if_{i+1})$$
Then the form 
$\Omega (\dots ,u_i,u_{i+1},\dots )+\Omega (\dots ,u_{i+1},u_i,\dots )$ 
is congruent modulo $d\Omega ^{N-1}_{O^0}$ to the form 
$$(\sigma /p)d_{\log }u_0\wedge\dots\wedge
(\sigma /p)d_{\log}u_{i-1}\wedge 
d(f_if_{i+1})\wedge d_{\log}u_{i+1}
\wedge\dots\wedge d_{\log}u_N$$
and, using again identity \eqref{E3.1}, we conclude that 
this form is exact. 
\end{proof}

Let $e=(e_1,\dots ,e_N)\in\Z ^N$ be such that 
$\pi _1^{e_1}\dots \pi _N^{e_N}/p\in\c O_F^*$, where $\c O_F$ is the 
$N$-dimensional valuation ring of $F$. We introduce 
the $W(k)$-algebra 
$\c L^0=O^0[[p/\underline{t}^{e(p-1)},\underline{t}^{ep}/p]]$
and set $\c L=\c L^0\otimes _{O^0}L_0(\c F)$. 
Clearly, we have $\c L=\underset{a>\bar 0}\bigcup \underline{t}^{-a}\c L^0$.

The algebra $\c L$ is a suitable completion of 
$L_0(\c F)$ and its elements can be treated as formal 
Laurent series in $t_1,\dots ,t_N$ with coefficients in $W(k)$. In particular,  
 $\c L^0$ consists of  
formal Laurent series $\sum _{a\in\Z ^N}w_a\underline t^a$ 
with coefficients $w_a\in W(k)$, such that: 

--- if $n\geqslant 0$ and $a\geqslant epn$ then $v_p(w_a)\geqslant -n$;

--- if $n\geqslant 0$ and $a\geqslant -ep(n-1)$ then $v_p(w_a)\geqslant n$.
 
We can use the above Laurent series to define 
the $\c L$-residues $\Res _{\c L}\,\omega $ for any 
$\omega\in \Omega ^N_{\c L}$. 
If any such form $\omega $ is the limit 
of $\omega _n\in\Omega ^N_{L_0(\c F)}$, then 
$\Res _{\c L}\,\omega $ is the limit of $\Res \,\omega _n$. 
Therefore, we can use  
for the $\c L$-residue of $\omega $, 
the simpler notation $\Res\,\omega $. 

\begin{Lem}\label{L3.2}
 Let $\pi _1^{e_1}\dots \pi _N^{e_N}/p=\eta \in \c O^*_F$ 
and let $\hat\eta\in O^0$ be such that $\kappa (\hat\eta )=\eta $. 
Then the kernel of $\kappa : O^0(\c F)\longrightarrow F$ is the principal 
ideal generated by $\underline{t}^e-p\hat\eta $.
\end{Lem}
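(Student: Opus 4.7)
The plan is to verify the easy inclusion and then establish the reverse by a Weierstrass-style division. The easy direction is immediate: $\kappa(\underline{t}^e-p\hat\eta)=\underline{\pi}^e-p\kappa(\hat\eta)=p\eta-p\eta=0$, so the principal ideal generated by $\underline{t}^e-p\hat\eta$ is contained in $\ker\kappa$.

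For the reverse inclusion, the key structural observation is that $\underline{t}^e-p\hat\eta$ is distinguished: its reduction modulo $p$ is the pure monomial $\underline{t}^e$ (regular in each $t_i$), while its reduction modulo the ideal $(t_1,\dots,t_N)$ equals $-p\hat\eta(\bar 0)$, with $\hat\eta(\bar 0)\in W(k)^{\times}$ since $\eta\in\c O_F^{\times}$ reduces to a nonzero element of $k$. The plan is to establish a canonical-form result: every $f\in O^0$ can be written as
$$f=(\underline{t}^e-p\hat\eta)\,g+r,$$
where $g\in O^0$ and $r$ lies in a fixed $W(k)$-submodule $S\subset O^0$ on which $\kappa$ restricts to a bijection onto $F$. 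Granting such a decomposition, $\kappa(f)=0$ forces $\kappa(r)=0$, whence $r=0$ and $f\in(\underline{t}^e-p\hat\eta)$.

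The reduction is produced by iterating the substitution $\underline{t}^e\mapsto p\hat\eta$: every monomial $w_a\underline{t}^a$ with $a\geqslant e$ componentwise is rewritten as $w_a p\hat\eta\,\underline{t}^{a-e}$, transferring weight from the $\underline{t}$-variables to $p$. The natural section $S$ consists of those series whose monomial supports avoid the region $\{a:a\geqslant e\}$, and under $\kappa$ such series correspond bijectively to $F$ via the unique expansion $\sum[\alpha_a]\underline{\pi}^a$ of elements of $F$ in the fixed system of local parameters $\pi_1,\dots,\pi_N$. The uniqueness of this expansion on the $F$-side is exactly what forces the injectivity of $\kappa|_S$ and hence the conclusion.

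The main obstacle is the convergence of the iterative substitution in the $P$-topology on $O^0$. In the one-dimensional case ($N=1$) the statement reduces to classical Weierstrass preparation for $W(k)[[t]]$, and is essentially standard. For $N>1$, however, the substitution $\underline{t}^e\mapsto p\hat\eta$ interacts delicately with the lexicographic ordering on $\Z^N$ and with the precise convergence conditions defining $O^0$ as a subring of $L_0(\c F)$; one must control these estimates carefully to ensure that both the quotient $g$ and the remainder $r$ actually lie in $O^0$, rather than in a larger completion such as $\c L^0$ from Subsection \ref{S3.1}. This bookkeeping, together with verifying that $S$ is stable under the reduction and that $\kappa|_S$ is indeed bijective, is the technical heart of the argument.
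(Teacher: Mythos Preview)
Your strategy shares its core idea with the paper's, but you have made the execution harder than necessary and your choice of section is off.

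The paper's proof is one line: it follows from the fact that $\kappa$ induces a bijection $O^0/\underline t^e\to\c O_F/p$. Unpacked, this says the composite $O^0\to\c O_F\to\c O_F/p$ is surjective with kernel $(\underline t^e,p)=(\underline t^e-p\hat\eta,p)$, and the lemma then follows by $p$-adic successive approximation rather than by a one-shot Weierstrass division: if $\kappa(f)=0$ then $f=(\underline t^e-p\hat\eta)g_0+pf_1$ with $g_0,f_1\in O^0$ and (since $\mathrm{char}\,F=0$) again $\kappa(f_1)=0$; iterating gives $f=(\underline t^e-p\hat\eta)\sum_{n\ge 0}p^ng_n$, which converges because $O^0$ is $p$-adically complete. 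No explicit section $S$, no Weierstrass remainder, and no $P$-topological bookkeeping are needed.

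Two specific remarks on your version. First, the order governing $O^0$ is the \emph{lexicographic} order, not the componentwise one: the ideal $\underline t^e O^0$ consists exactly of the series supported on $\{a:a\ge e\text{ lex}\}$, so if you insist on a section it should be indexed by $\{\bar 0\le a<e\text{ lex}\}$. With the componentwise cut you propose, verifying that $\kappa|_S$ is injective is at best awkward and in general will fail, since nonzero multiples of $\underline t^e-p\hat\eta$ can be supported entirely outside the componentwise cone $\{a\ge e\}$. Second, the convergence obstacle you flag is not genuine: each pass of the substitution $\underline t^e\mapsto p\hat\eta$ gains a factor of $p$, and $O^0$ is visibly $p$-adically complete, so the iteration converges without any appeal to the $P$-topology.
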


\begin{proof} The proof follows easily from the fact that 
 $\kappa $ induces a bijective map 
$O^0(\c F)/\underline{t}^e\rightarrow \c O_F/p$.
\end{proof}

\begin{Prop} \label{P3.3} If $u_0\in \c R$ and $\kappa (u_0)=1$ then there are 
$\omega ^0,\omega ^1\in\Omega ^{N-1}_{O^0}$ 
such that for 
$\Omega '=\log(u_0)d_{\log }u_1
\wedge\dots\wedge d_{\log}u_N-\frac{\sigma }{p}\log(u_0)\frac{\sigma }{p}d_{\log }u_1\wedge\dots \wedge 
\frac{\sigma}{p}d_{\log }u_N $, it holds 
$\Omega =\Omega '+d(\log(u_0)\omega ^0+\omega ^1)$. 
\end{Prop}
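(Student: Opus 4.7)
My plan is to establish the identity by combining two algebraic observations. First, the hypothesis $\kappa(u_0)=1$ forces $u_0\in 1+\m^0$, so $L:=\log(u_0)\in\m^0\subset O^0$ is well-defined; since $\log\sigma=\sigma\log$ on $1+\m^0$, one has the decomposition $f_0=L-L'$ where $L':=(\sigma/p)\log(u_0)$. Consequently the $i=0$ summand of $\Omega$ equals $f_0\eta=L\eta-L'\eta$, with $\eta:=d_{\log}u_1\wedge\dots\wedge d_{\log}u_N$. This already produces the first half of $\Omega'$, so writing $\eta':=(\sigma/p)d_{\log}u_1\wedge\dots\wedge(\sigma/p)d_{\log}u_N$ and $\tilde\Omega:=\Omega-f_0\eta$, it remains to show that $\tilde\Omega-L'(\eta-\eta')$ is exact of the required shape.

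Second, the 1-form $(\sigma/p)d_{\log}u_0$ equals $dL'$, which makes every summand of $\tilde\Omega$ amenable to integration by parts. With the abbreviation
\[\alpha_j:=(\sigma/p)d_{\log}u_1\wedge\dots\wedge(\sigma/p)d_{\log}u_{j-1}\wedge d_{\log}u_{j+1}\wedge\dots\wedge d_{\log}u_N,\]
one has $\tilde\Omega=\sum_{j=1}^N(-1)^jf_j\,dL'\wedge\alpha_j$. Since both $d_{\log}u$ and its $\sigma/p$-twist are closed, each $\alpha_j$ is closed, and the Leibniz rule yields
\[\tilde\Omega=d\Bigl(\sum_{j=1}^N(-1)^jL'f_j\alpha_j\Bigr)-L'\sum_{j=1}^N(-1)^jdf_j\wedge\alpha_j.\]
A telescoping computation using $df_j=d_{\log}u_j-(\sigma/p)d_{\log}u_j$, combined with moving $df_j$ past $j-1$ factors (contributing $(-1)^{j-1}$), shows that $\sum_{j=1}^N(-1)^jdf_j\wedge\alpha_j=-(\eta-\eta')$. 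Substituting gives exactly $\tilde\Omega-L'(\eta-\eta')=d\bigl(\sum_{j=1}^N(-1)^jL'f_j\alpha_j\bigr)$.

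Finally, the decomposition $L'=L-f_0$ separates this exact term into the desired shape, yielding
\[\omega^0=\sum_{j=1}^N(-1)^jf_j\alpha_j,\qquad \omega^1=-\sum_{j=1}^N(-1)^jf_0f_j\alpha_j.\]
Both forms lie in $\Omega^{N-1}_{O^0}$: each $f_i$ belongs to $\m^0\subset O^0$, and each $\alpha_j$ is a wedge of logarithmic differentials of elements of $\c R$, hence has $O^0$-coefficients in the basis of $d_{\log}t_{i_1}\wedge\dots\wedge d_{\log}t_{i_{N-1}}$. I expect the main obstacle to be precisely the sign bookkeeping in the telescoping identity—in particular confirming that the $(-1)^j$ from the definition of $\Omega$ correctly cancels the $(-1)^{j-1}$ arising from shuffling $df_j$ into position $j$. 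One can gain confidence by checking the case $N=1$ directly, where the formula collapses to $\Omega-\Omega'=d(f_0f_1-Lf_1)$ with $\omega^0=-f_1,\,\omega^1=f_0f_1$. Once the algebraic identity is secured, the integrality assertions are automatic.
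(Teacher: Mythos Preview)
Your argument is correct and is essentially the paper's proof, just organized globally rather than term-by-term: the paper records the identity $f_0=\log(u_0)-(\sigma/p)\log(u_0)$ and, for each $i\geq 1$, the 1-form identity $f_i(\sigma/p)d_{\log}u_0=d(f_i\log(u_0)-f_if_0)-(\sigma/p)\log(u_0)\,df_i$, which is precisely your integration-by-parts step written for a single index; your telescoping sum then makes explicit the combination the paper leaves to the reader, and the resulting $\omega^0,\omega^1$ agree.

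One inaccuracy worth flagging: your claim that $L=\log(u_0)\in\m^0$ is not true in general, since the series $\sum_{n\geq 1}(-1)^{n-1}(u_0-1)^n/n$ acquires $p$-denominators. The paper instead invokes Lemma~3.2 (that $\kappa(u_0)=1$ means $u_0-1$ is divisible by $\underline{t}^e-p\hat\eta$) to place $\log(u_0)$ in the larger ring $\c L^0$. Fortunately your argument never actually uses integrality of $L$; it only uses the formal identities $f_0=L-L'$ and $dL'=(\sigma/p)d_{\log}u_0$, and the integrality of $\omega^0,\omega^1$ comes from $f_0,f_j\in\m^0$. So the slip is harmless, but you should correct the statement about where $\log(u_0)$ lives.
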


\begin{proof} Clearly, $u_0\in 1+\m ^0$. 
By above Lemma \ref{L3.2} the relation $\kappa (u_0)=1$ 
 implies that $\log(u_0)\in\c L^0$. Then the statement of our Proposition 
is implied by the following identities: 
$f_0=\log(u_0)-(\sigma /p)\log(u_0)$ and for $1\leqslant i\leqslant N$, 
$$f_i(\sigma /p)d_{\log}u_0=d(f_i\log(u_0)-f_if_0)-(\sigma /p)
\log(u_0)(d_{\log}u_i-(\sigma /p)d_{\log}u_i).$$
\end{proof}

\subsection{Element $H_0$.} \label{S3.2} 
As in the Introduction, choose a primitive $p^M$-th root of unity 
$\zeta _M\in F$ and introduce $H_0\in \m ^0$ such that 
$H_0=H^{\prime p^M}-1$, where $H'\in 1+\m ^0$ is such that 
$\kappa (H')\equiv\zeta _M\,\mathrm{mod}\,pO_F$. 

Clearly, we have $dH_0\in p^M\Omega ^1_{O^0}$. 

\begin{Lem} \label{L3.4} a) There are $o_1\in O^{0*}$ and 
$o_2,o_3\in O^0$ such that 

a) $H_0=o_1\underline{t}^{ep/(p-1)}+po_2
\underline{t}^{e/(p-1)}+p^2o_3$;

b) $H_0^{-1}\in\underline{t}^{-ep/(p-1)}
O^0[[p\underline{t}^{-e}]]\subset\c L$; 

c) $H_0^{p-1}/p\in O^0[[\underline{t}^{ep}/p]]\subset\c L^0$ and 
$O^0[[H_0^{p-1}/p]]=O^0[[\underline{t}^{ep}/p]]$;

d) $(\sigma /p)H_0=H_0(1+o_1H_0+o_2(H_0^{p-1}/p)+o_3(p^M/H_0))$,  
where the coefficients $o_1,o_2,o_3\in O^0$ . 
\end{Lem}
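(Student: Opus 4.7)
I use the Hasse-style construction of $\zeta_M$ from the Remark in Subsection~\ref{S1.2}: pick $\xi\in\m_F$ with $E(1,\xi)=\zeta_M$, lift it to $\hat\xi\in\m^0$ via $\kappa$, and set $H'=E(1,\hat\xi)\in 1+\m^0$. Then $\kappa(H')=\zeta_M$, and
\[
H_0 = E(1,\hat\xi)^{p^M}-1 = \exp\Bigl(\sum_{n\geqslant 0}p^{M-n}\hat\xi^{p^n}\Bigr)-1.
\]
Since $v_F(\xi)=e_1/(p^{M-1}(p-1))$, the leading monomial of $\hat\xi$ is $w_0\underline{t}^{e/(p^{M-1}(p-1))}$ with $w_0\in W(k)^{*}$.

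For part (a), sort the contributions by a suitable weight function on $\c L^{0}$ for which $\underline{t}^{ep/(p-1)}$ and $p\underline{t}^{e/(p-1)}$ both have weight $p$, while $p^{2}$ has weight $2(p-1)$. Combining the valuation of $\hat\xi$ with $v_p(k!)=(k-s_p(k))/(p-1)$, a term-by-term analysis shows that the only contributions of weight $\leqslant p$ come from $\hat\xi^{p^M}$ (yielding the leading $w_0^{p^M}\underline{t}^{ep/(p-1)}$) and from $p\hat\xi^{p^{M-1}}$ (yielding $p\sigma^{-1}(w_0)^{p^{M-1}}\underline{t}^{e/(p-1)}$); every other contribution, including all exp-cross-terms and the tail $n>M$, carries weight $\geqslant 2(p-1)$ and is absorbed into $p^{2}o_3$. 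This gives $o_1(\bar 0)=w_0^{p^M}\in W(k)^{*}$, hence $o_1\in O^{0*}$.

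Parts (b) and (c) proceed by direct manipulation in $\c L$. Factoring $o_1\underline{t}^{ep/(p-1)}$ out of (a) yields
\[
H_0 = o_1\underline{t}^{ep/(p-1)}\bigl(1+(o_2/o_1)p\underline{t}^{-e}+(o_3/o_1)p^{2}\underline{t}^{-ep/(p-1)}\bigr),
\]
and since $p^{2}\underline{t}^{-ep/(p-1)}=(p\underline{t}^{-e})^{2}\underline{t}^{e(p-2)/(p-1)}\in O^{0}[[p\underline{t}^{-e}]]$, the bracket can be inverted as a geometric series, proving (b). For (c), raise this factorization to the $(p-1)$-th power and divide by $p$: the factor $o_1^{p-1}(\underline{t}^{ep}/p)$ comes out explicitly, and every remaining $p\underline{t}^{-e}$ equals $\underline{t}^{e(p-1)}(\underline{t}^{ep}/p)^{-1}$; after regrouping, $H_0^{p-1}/p$ lies in $O^{0}[[\underline{t}^{ep}/p]]$ with leading unit coefficient $o_1^{p-1}$, so $O^{0}[[H_0^{p-1}/p]]=O^{0}[[\underline{t}^{ep}/p]]$.

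For (d), apply $\sigma$ to (a) to get $\sigma H_0=\sigma(o_1)\underline{t}^{ep^{2}/(p-1)}+p\sigma(o_2)\underline{t}^{ep/(p-1)}+p^{2}\sigma(o_3)$, then divide by $pH_0$ using the unit expansion of $H_0^{-1}$ from (b). The three resulting pieces match the three stipulated correction terms: $\sigma(o_1)\underline{t}^{ep^{2}/(p-1)}/p$ becomes an $O^{0}$-multiple of $H_0^{p-1}/p$ via (c); $\sigma(o_2)\underline{t}^{ep/(p-1)}$ becomes a multiple of $H_0$ using (a); and the remaining $p$-divisible tail has $p$-valuation $\geqslant M$ (because $\kappa(H_0)\equiv 0\bmod p^{M+1}$), producing the $o_3\,p^M/H_0$ piece after dividing by $H_0^{-1}$. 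The main obstacle is the weight accounting in (a): one must verify that no term from the double sum $\sum_{n,k}(p^{M-n}\hat\xi^{p^n})^{k}/k!$ produces a monomial of weight strictly between $p$ and $2(p-1)$, which requires combining the explicit valuation of $\hat\xi$ with $v_p(k!)$ and controlling cross-terms where different $n$ combine. Once (a) is established, (b)--(d) reduce to direct computation.
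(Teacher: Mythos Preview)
Your treatment of (b) and (c) from (a) is fine and matches the paper's ``b) and c) are implied by a)''. The problems are in (a) and especially (d).

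\textbf{Part (a).} You prove (a) only for the particular choice $H'=E(1,\hat\xi)$, whereas $H_0$ is defined from \emph{any} $H'\in 1+\m^0$ with $\kappa(H')\equiv\zeta_M\pmod{pO_F}$. The paper's argument is both simpler and general: from $\kappa(H')\equiv\zeta_M\pmod p$ and the known valuation $v_F(\zeta_M-1)=e_1/(p^{M-1}(p-1))$ one gets directly
\[
H'\equiv 1+o\,\underline{t}^{\,e/(p^{M-1}(p-1))}\pmod{(p,\underline{t}^{e})},\qquad o\in O^{0*},
\]
and then raising to the $p^M$-th power gives (a) without any Artin--Hasse bookkeeping or weight analysis.

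\textbf{Part (d).} Here your sketch has a genuine gap. Applying $\sigma$ to the decomposition (a) and dividing by $pH_0$ does \emph{not} produce the leading ``$1$'' in $1+o_1H_0+o_2(H_0^{p-1}/p)+o_3(p^M/H_0)$: the term $\sigma(o_2)\underline{t}^{ep/(p-1)}/H_0$ contributes a unit $\sigma(o_2)o_1^{-1}(1+\ldots)$, and nothing in your argument forces its constant term to be~$1$. Your justification of the $p^M/H_0$ piece is also confused: the congruence $\kappa(H_0)\equiv 0\pmod{p^{M+1}}$ lives in $F$ and says nothing directly about $\sigma H_0$ in $L(\c F)$.

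The paper's route avoids all of this. Since $\sigma$ lifts the $p$-th power on $\c F$, one has $\sigma H'\equiv H'^{p}\pmod{pO^0}$, hence
\[
\sigma H_0=(\sigma H')^{p^M}-1\equiv (1+H_0)^{p}-1\pmod{p^{M+1}O^0}.
\]
Dividing by $p$ and expanding the binomial gives
\[
(\sigma/p)H_0=H_0+\sum_{k=2}^{p-1}\tfrac{1}{p}\binom{p}{k}H_0^{k}+\frac{H_0^{p}}{p}+p^{M}o
=H_0\Bigl(1+o_1'H_0+\frac{H_0^{p-1}}{p}+o_3'\,\frac{p^M}{H_0}\Bigr)
\]
with $o_1'=\sum_{k=2}^{p-1}\tfrac{1}{p}\binom{p}{k}H_0^{k-2}\in O^0$ and $o_3'=o\in O^0$. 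The ``$1$'' and the $p^M$ factor fall out for free. This identity $\sigma H_0\equiv (1+H_0)^p-1\pmod{p^{M+1}}$ is the key step you are missing.
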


\begin{proof} In order to prove a) use that 
 $H'\equiv 1+o\underline{t}^{e/p^{M-1}(p-1)}
\,\mathrm{mod}\,(p,\underline{t}^e)$ with $o\in O^{0*}$. 
Then b) and c) are implied by a). For part d), use that 
$\sigma H'\equiv H^{\prime p}\,\mathrm{mod}\,pO^0$ and 
therefore, $\sigma H_0\equiv (1+H_0)^p-1\,\mathrm{mod}\,p^{M+1}O^0$.
\end{proof}

\begin{Lem} \label{L3.5} 
If $\omega =\log(u_0)\omega _1+\omega _0$ with 
$\omega _0,\omega _1\in \Omega ^{N-1}_{O^0}$ then we have    
 $\Res (H^{-1}d\omega )\in p^MW(k)$.
\end{Lem}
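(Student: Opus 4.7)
The plan is integration by parts, exploiting the divisibility $dH_0 \in p^M\Omega^1_{O^0}$. First I would record the standard fact that $\Res(d\theta) = 0$ for any $(N-1)$-form $\theta$ in the relevant ring: indeed, the residue is the coefficient of $t_1^{-1}\cdots t_N^{-1}$ in the underlying Laurent expansion, and any partial derivative $\partial g/\partial t_i$ has vanishing $t_1^{-1}\cdots t_N^{-1}$-coefficient (it would arise only from the $t_i^0$-coefficient of $g$ multiplied by the exponent $0$).

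Next I would apply the Leibniz rule to write
\begin{equation*}
H_0^{-1}d\omega = d(H_0^{-1}\omega) + H_0^{-2}dH_0\wedge\omega,
\end{equation*}
so that $\Res(H_0^{-1}d\omega) = \Res(H_0^{-2}dH_0\wedge\omega)$. Since $H_0 = H'^{p^M}-1$, differentiation gives $dH_0 = p^M H'^{p^M-1}dH'$, and because $H' \in 1+\m^0 \subset O^{0*}$ we have $H'^{p^M-1}dH' \in \Omega^1_{O^0}$. Factoring out $p^M$ yields
\begin{equation*}
\Res(H_0^{-1}d\omega) = p^M\,\Res\bigl(H_0^{-2}H'^{p^M-1}dH'\wedge\omega\bigr).
\end{equation*}

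The remaining task is to verify that the residue on the right lies in $W(k)$. Here I would use Lemma \ref{L3.4}(b) to expand $H_0^{-2} \in \underline{t}^{-2e^*}O^0[[p\underline{t}^{-e}]]$ as $\underline{t}^{-2e^*}\sum_{n\geq 0}b_n(p\underline{t}^{-e})^n$ with $b_n\in O^0$. Multiplying by $H'^{p^M-1}dH'\wedge\omega$ and extracting the $t_1^{-1}\cdots t_N^{-1}$-coefficient then reduces to a convergent series in $W(k)$ whose $n$-th summand carries a factor $p^n$.

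The main obstacle is this last integrality check, and specifically the contribution coming from the $\log(u_0)\omega_1$ part of $\omega$: by Proposition \ref{P3.3} we have $\log(u_0) \in \c L^0$, but its expansion contains $1/n$-denominators, which must be balanced by the $p^n$-factors in the $H_0^{-2}$-expansion above. It is exactly here that the precise structure of $H_0$ given by Lemma \ref{L3.4} (in particular parts (b) and (c)) becomes indispensable.
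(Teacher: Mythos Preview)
Your approach---integrate by parts and then exploit $dH_0\in p^M\Omega^1_{O^0}$---is exactly the paper's. Both proofs reduce to showing that $\Res\bigl(H_0^{-2}\,dH_0\wedge\omega\bigr)\in p^MW(k)$, and both factor out the $p^M$ coming from $dH_0$. The only difference is in how the residual integrality is verified.

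The paper's check is shorter than your proposed termwise expansion. It simply records that $\omega/H_0^2\in\underline{t}^{-2ep/(p-1)}\c L^0\,\Omega^{N-1}_{O^0}$ (using Lemma~\ref{L3.4}(b) for $H_0^{-2}$ and $\log(u_0)\in\c L^0$ for the first summand of $\omega$), so that $H_0^{-2}\,dH_0\wedge\omega\in p^M\underline{t}^{-2ep/(p-1)}\c L^0\,\Omega^N_{O^0}$; then the comparison $2ep/(p-1)\leq ep$ (this is where $p$ odd is used) forces the residue of anything in $\underline{t}^{-2ep/(p-1)}\c L^0\,dt_1\wedge\dots\wedge dt_N$ to lie in $W(k)$. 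No unpacking of $H_0^{-2}$ into a power series is required.

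Your final paragraph is slightly off target. The $1/n$-denominators in the series for $\log(u_0)$ are not balanced against the $p^n$-weights in your expansion of $H_0^{-2}$; those two gradings are unrelated. Rather, the denominators disappear as soon as one knows $\log(u_0)\in\c L^0$: membership in $\c L^0$ is precisely the statement that all $p$-denominators are already compensated by $\underline{t}$-powers. Note that $\log(u_0)\in\c L^0$ is not a consequence of the bare hypotheses of the lemma; it uses $\kappa(u_0)=1$ via Lemma~\ref{L3.2}. The paper's proof tacitly assumes this too, and it holds in every application of the lemma (the remaining applications have $\omega_1=0$). Once you grant $\log(u_0)\in\c L^0$, your expansion argument collapses to the paper's.
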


\begin{proof} Note that $\Res (H^{-1}d\omega )=-\Res (d(1/H)\wedge \omega )$, 
where $\omega /H^2\in\underline{t}^{-2ep/(p-1)}\c L^0\Omega _{O^0}^{N-1}$ and 
$dH\in p^M\Omega ^1_{O^0}$. It follows that 
$$d(1/H)\wedge \omega =(\omega /H^2)\wedge dH\in p^M\underline{t}^{-2ep/(p-1)}
\c L^0\Omega ^N_{O^0}.$$
It remains to notice that 
$ep>2ep/(p-1)$, which implies that 
\linebreak 
$\Res (t^{-2ep1/(p-1)}\c L^0dt_1\wedge\dots\wedge dt_N)\subset W(k)$.
\end{proof}

\begin{Cor}\label{C3.6}  With the notation from Proposition {\rm\ref{P3.3}}, we have  
$$\Res (\Omega /H)\equiv\Res (\Omega '/H)\,\mathrm{mod}\,p^M.$$
\end{Cor}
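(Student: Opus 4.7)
The plan is to observe that this Corollary is almost a direct consequence of combining the two preceding results, Proposition~\ref{P3.3} and Lemma~\ref{L3.5}, with essentially no extra work required. The whole point of setting up Proposition~\ref{P3.3} and Lemma~\ref{L3.5} in tandem appears to be to make this Corollary automatic.

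Concretely, I would proceed as follows. First, apply Proposition~\ref{P3.3} (noting that since $\kappa(u_0)=1$ we are entitled to use it) to write
$$\Omega=\Omega'+d(\log(u_0)\omega^0+\omega^1)$$
for some $\omega^0,\omega^1\in\Omega^{N-1}_{O^0}$. Dividing by $H=H_0$ and taking $\c L$-residues gives
$$\Res(\Omega/H)-\Res(\Omega'/H)=\Res\!\left(H^{-1}d(\log(u_0)\omega^0+\omega^1)\right).$$
Next, set $\omega:=\log(u_0)\omega^0+\omega^1$; since $\omega^0,\omega^1\in\Omega^{N-1}_{O^0}$, this $\omega$ is exactly of the shape $\log(u_0)\omega_1+\omega_0$ demanded in the hypothesis of Lemma~\ref{L3.5}. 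Therefore Lemma~\ref{L3.5} immediately gives $\Res(H^{-1}d\omega)\in p^MW(k)$, and the congruence claimed in the Corollary follows.

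There is no genuine obstacle here; the only small point worth stating is that the division $\Omega/H$ makes sense in $\Omega^N_{\c L}$ because, by Lemma~\ref{L3.4}(b), $H_0^{-1}$ lives in $\c L$, so both $\Omega/H$ and $\Omega'/H$ are legitimate elements of $\Omega^N_{\c L}$ and the residue $\Res$ is defined on each. Since the subtraction of residues from Proposition~\ref{P3.3} yields precisely the integrand that Lemma~\ref{L3.5} is designed to handle, the mod-$p^M$ conclusion is immediate.
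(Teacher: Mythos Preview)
Your proposal is correct and matches the paper's approach exactly: the paper states this Corollary without proof immediately after Lemma~\ref{L3.5}, treating it as the obvious consequence of applying Lemma~\ref{L3.5} to the exact term $d(\log(u_0)\omega^0+\omega^1)$ supplied by Proposition~\ref{P3.3}. There is nothing to add.
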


\begin{Prop} \label{P3.7} 
If $h\in O^0[[\underline{t}^{ep}/p]]$ then 
$$\Res \left (\frac{hd_{\log}t_1\wedge\dots\wedge d_{\log}t_N}{H}\right )
\equiv\Res \left (\frac{hd_{\log}t_1\wedge\dots\wedge d_{\log}t_N}{(\sigma /p)H}\right )
\mathrm{mod}\,p^M.$$
\end{Prop}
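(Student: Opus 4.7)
The plan is to reduce the difference of residues to something manifestly divisible by $p^M$, by using Lemma 3.4(d) to express $(\sigma/p)H$ in terms of $H$ plus explicit correction terms, and then handling the pieces of the resulting decomposition separately.

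Concretely, I would first set $u = (\sigma/p)H/H$, which by Lemma 3.4(d) can be written as $u = 1+o_1 H+o_2(H^{p-1}/p)+o_3(p^M/H)$ with $o_1,o_2,o_3\in O^0$. Then
$$\frac{1}{H}-\frac{1}{(\sigma/p)H}=\frac{u-1}{Hu}=\frac{o_1}{u}+\frac{o_2 H^{p-2}}{pu}+\frac{p^M o_3}{H^2u},$$
so multiplying by $h\,\omega$ and applying $\Res$ splits the left-hand side of the claimed congruence into three summands whose sum must lie in $p^M W(k)$.

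The third summand is easy: it already carries an explicit $p^M$, so it suffices to check that $\Res(h o_3/(H^2 u)\cdot\omega)\in W(k)$. This follows from Lemma 3.4(b), which places $h o_3/(H^2 u)$ in $\underline t^{-2ep/(p-1)}\c L^0$, combined with the inequality $ep>2ep/(p-1)$ exploited in the proof of Lemma 3.5, which guarantees that the residue of any form in this space against $\omega$ already lies in $W(k)$.

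The hard part is the combination of the first two summands. Using the sharper equality $\sigma H\equiv (1+H)^p-1\pmod{p^{M+1}}$ from the proof of Lemma 3.4(d), one verifies that, modulo $p^M$,
$$o_1+\frac{H^{p-2}}{p}\equiv \frac{(1+H)^p-1-pH}{pH^2}=\frac{1}{pH^2}\sum_{k=2}^{p}\binom{p}{k}H^k\,.$$
So the sum of the first two pieces equals $\frac{h((1+H)^p-1-pH)}{pH^2u}\cdot \omega$ modulo a piece that is already $p^M$-divisible. Here the hypothesis $h\in O^0[[\underline t^{ep}/p]]=O^0[[H^{p-1}/p]]$ (Lemma 3.4(c)) enters: expanding $h$ as a power series in $H^{p-1}/p$ and $1/u$ as a geometric series in $1-u\in \c L$, I would rewrite the integrand as an exact form $d\eta$ (whose residue is $0$) plus a sum of terms each carrying an explicit factor of $p^M$, obtained by repeated integration by parts of the shape $\Res((dH/H^k)\wedge\beta)=\Res((1/H^{k-1})d\beta)/(1-k)$ together with $dH\in p^M\Omega^1_{O^0}$.

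The main obstacle is precisely this last step: neither $\Res(ho_1/u\cdot\omega)$ nor $\Res(hH^{p-2}/(pu)\cdot\omega)$ is individually divisible by $p^M$, and it is their fine-tuned combination, dictated by the binomial identity $\sum_{k=2}^p\binom{p}{k}H^k=(1+H)^p-1-pH$, that produces the cancellation. Organising the expansion so that the non-exact part is visibly of the form $p^M\cdot(\text{element of }\c L^0)$ against $\omega$ is the bookkeeping step that will require the most care; everything else is essentially algebraic manipulation controlled by Lemmas 3.4 and 3.5.
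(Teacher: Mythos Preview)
Your proposal and the paper share the opening move: invoke Lemma~3.4(d) to write $(\sigma/p)H_0=H_0\,u$ with $u=1+o_1H_0+o_2(H_0^{p-1}/p)+o_3(p^M/H_0)$, and dispose of the piece carrying an explicit $p^M$ exactly as you do for your third summand (this is the paper's case $l_3\ge 1$). The divergence is in the remaining terms. The paper does \emph{not} keep $1/u$ intact and search for a cancellation between your first two summands; instead it expands $1/(1+v)$ as a geometric series and treats each resulting monomial
\[
\mathcal G=\frac{h_1\,H_0^{l_1}(H_0^{p-1}/p)^{l_2}(p^M/H_0)^{l_3}}{H_0},\qquad l_1+l_2+l_3\ge 1,\ \ h_1\in O^0[[\underline t^{ep}/p]],
\]
on its own. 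When $l_3=0$ the paper asserts $\mathcal G\in O^0[[\underline t^{ep}/p]]$ (this is where the hypothesis $h\in O^0[[\underline t^{ep}/p]]=O^0[[H_0^{p-1}/p]]$ is actually used) and hence $\Res(\mathcal G\,d_{\log}t_1\wedge\dots\wedge d_{\log}t_N)=0$ outright. No ``fine-tuned combination'' of the $o_1$- and $o_2$-contributions is invoked, and no integration by parts appears.

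Your proposed mechanism for the first two summands is also where the argument has a real gap. The forms you must handle, for instance $(ho_1/u)\,d_{\log}t_1\wedge\dots\wedge d_{\log}t_N$, contain no $dH_0$, and you do not say how one is to be manufactured so that the identity $\Res((dH_0/H_0^k)\wedge\beta)=\Res(H_0^{1-k}d\beta)/(1-k)$ becomes applicable; nor is it explained why iterating this would terminate with only $p^M$-divisible remainders. In fact, the moment you carry out the expansions you yourself suggest (geometric series for $1/u$, power series for $h$ in $H_0^{p-1}/p$), you land precisely on the paper's monomials $\mathcal G$, and the integration-by-parts detour becomes unnecessary.
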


\begin{proof} We follow the strategy from 
the proof of Lemma 3.1.3 in [Ab]. 

By Lemma \ref{L3.4}d) it will be sufficient to prove 
the congruence 
\linebreak 
$\Res (\c Gd_{\log}t_1\wedge\dots \wedge d_{\log}t_N)
\equiv 0\,\mathrm{mod}\,p^M$, where 
$$\c G=\frac{h_1H_0^{l_1}
(H_0^{p-1}/p)^{l_2}(p^M/H)^{l_3}}{H_0}$$
with $h_1\in O^0[[\underline{t}^{ep}/p]]$, 
$l_1,l_2,l_3\in\Z _{\geqslant 0}$ and $l_1+l_2+l_3\geqslant 1$. 

If $l_3=0$ then 
$\c G\in\c O^0[[\underline{t}^{ep}/p]]$ 
and, therefore, the residue 
\linebreak 
$\Res (\c Gd_{\log}t_1\wedge\dots\wedge d_{\log}t_N)=0$.

If $l_3\geqslant 1$ then use that $H^{p-1}/p, p/H\in\c L^0$ 
to obtain that 
$$\c G\in (p^M/H^2)\c L^0\subset p^M\underline{t}^{-2ep/-1)}\c L^0.$$
Similarly to the proof of Lemma \ref{L3.5} this implies that 
the residue $\Res (\c Gd_{\log}t_1\wedge\dots\wedge d_{\log}t_N)\in p^MW(k)$. 
\end{proof}

\subsection{A construction of Vostokov's pairing $V$.} 
\label{S3.3} 
For any elements  $u_0,\dots ,u_N\in \c R$, set 
\begin{equation}\label{E3.3}
\widetilde{V}(u_0,\dots ,u_N)=\Tr \left (\Res 
\frac{\Omega }{H}\right )\mathrm{mod}p^M, 
\end{equation}
where, as earlier, $\Tr $ is the trace map 
for the field extension $W(k)_{\Q _p}/\Q _p$. 
Then Proposition \ref{*3.1} and Lemma \ref{L3.5} imply that $\widetilde{V}$ is 
an $(N+1)$-linear skew-symmetric form on
$\c R$ with values in $\Z /p^M$. 

\begin{Prop} If $\kappa (u_0)=1$ then 
 $\widetilde{V}(u_0,\dots ,u_N)=0$.
\end{Prop}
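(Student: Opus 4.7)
The plan is to combine Corollary \ref{C3.6} with the Frobenius structure of $\Omega'$ to exhibit $\Res(\Omega/H)$ modulo $p^M$ as an element of the form $x-\sigma(x)$, whose trace then vanishes by the $\sigma$-invariance of $\Tr$ on $W(k)_{\Q_p}/\Q_p$.

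Since $\kappa(u_0)=1$, Lemma \ref{L3.2} gives $\log(u_0)\in\c L^0$, so Proposition \ref{P3.3} applies and, combined with Lemma \ref{L3.5}, Corollary \ref{C3.6} reduces the claim to showing
\[
\Tr\bigl(\Res(\Omega'/H)\bigr)\equiv 0\pmod{p^M},
\]
where $\Omega'=A-B$ with $A=\log(u_0)\,d_{\log}u_1\wedge\cdots\wedge d_{\log}u_N$ and $B=(\sigma/p)\log(u_0)\cdot(\sigma/p)d_{\log}u_1\wedge\cdots\wedge(\sigma/p)d_{\log}u_N$.

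The key observation is that $B$ is the image of $A$ under the natural lift of Frobenius, divided by $p^{N+1}$. Let $\varphi$ denote the ring endomorphism of $L_0(\c F)$ given by $t_i\mapsto t_i^p$ and $\sigma$ on $W(k)$, extended to differential forms by $\varphi(df)=d\varphi(f)$; then $\varphi(d_{\log}t_i)=p\,d_{\log}t_i$, so $B=\varphi(A)/p^{N+1}$ and $\varphi(H)=p\cdot(\sigma/p)H$. Consequently
\[
\frac{B}{(\sigma/p)H}=\frac{1}{p^N}\varphi\bigl(A/H\bigr).
\]
Writing $A/H=g\,d_{\log}t_1\wedge\cdots\wedge d_{\log}t_N$, the factor $p^N$ produced by Frobenius on the wedge of $d_{\log}t_i$ cancels the prefactor $1/p^N$, and extracting the constant coefficient of the resulting series yields the clean identity
\[
\Res\bigl(B/(\sigma/p)H\bigr)=\sigma\bigl(\Res(A/H)\bigr).
\]

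I would then apply Proposition \ref{P3.7} to replace $1/H$ by $1/(\sigma/p)H$ inside $\Res(B/H)$ modulo $p^M$. Combined with the previous identity, this produces
\[
\Res(\Omega'/H)\equiv\Res(A/H)-\sigma\bigl(\Res(A/H)\bigr)\pmod{p^M},
\]
and $\Tr$ kills the right-hand side, completing the argument. The main obstacle is verifying the hypothesis of Proposition \ref{P3.7} for $B$: its coefficient with respect to $d_{\log}t_1\wedge\cdots\wedge d_{\log}t_N$ must be expressed as a $P$-convergent sum of elements of $O^0[[\underline{t}^{ep}/p]]$. Using Lemma \ref{L3.4}c), which identifies $O^0[[\underline{t}^{ep}/p]]=O^0[[H_0^{p-1}/p]]$, together with the compensating powers of $p$ produced by $\sigma(t^a)=t^{pa}$ when one expands $\sigma(\log u_0)$ and each $d_{\log}(\sigma u_i)$, the hypothesis can be verified term-by-term by $P$-continuity. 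This bookkeeping is the only non-formal step; everything else is formal Frobenius manipulation.
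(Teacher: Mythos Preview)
Your approach is essentially the same as the paper's. Both reduce via Corollary~\ref{C3.6} to $\Omega'$, invoke Proposition~\ref{P3.7} to pass from $1/H$ to $1/(\sigma/p)H$ in the $B$-term, and then establish the Frobenius identity $\Res\bigl(B/(\sigma/p)H\bigr)=\sigma\,\Res(A/H)$, so that the trace kills the difference. The paper packages the Frobenius step slightly differently: rather than introducing an endomorphism $\varphi$ on forms, it works with the subring $\c L^{(-1)}=\{\,l\in\c L\mid \sigma(l)\in\c L\,\}$ and checks (via Lemma~\ref{L3.2}) that $r=\log(u_0)/H_0\in\c L^{(-1)}$, from which the identity $\sigma\,\Res(r\,d_{\log}u_1\wedge\cdots\wedge d_{\log}u_N)=\Res\bigl(\sigma(r)\,(\sigma/p)d_{\log}u_1\wedge\cdots\wedge(\sigma/p)d_{\log}u_N\bigr)$ follows; this is exactly your computation with $g$, rephrased. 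The paper is equally brief about verifying the hypothesis of Proposition~\ref{P3.7}, so your explicit acknowledgement of that step as the one non-formal point is, if anything, more candid than the original.
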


\begin{proof} By Propositions 3.3 and 3.7 
it will be sufficient to prove that 
$$\sigma\Res\left (\frac{\log(u_0)}{H_0}d_{\log}
u_1\wedge\dots\wedge d_{\log}u_N\right )\equiv 
\qquad\qquad\qquad \qquad\qquad\qquad $$
$$\qquad \Res \left (\frac{(\sigma /p)\log(u_0)}{(\sigma /p)H_0}
(\sigma /p)d_{\log}u_1\wedge\dots \wedge 
(\sigma /p)d_{\log}u_N\right )\mathrm{mod}\,p^M$$
 Let $\c L^{(-1)}$ be the subalgebra in $\c L$ 
consisting of the formal  Laurent series 
$l=\sum _{a\in\Z ^N}w_a\underline{t}^a$ such that 
$\sigma (l):=\sum _{a\in\Z ^N}\sigma (w_a)\underline{t}^{ap}\in\c L$. 
Then one can verify that for any $r\in\c L^{(-1)}$, 
$$\sigma\Res (rd_{\log}u_1\wedge\dots\wedge d_{\log}u_N)=
\Res \left (\sigma (r)\frac{\sigma }{p}d_{\log}u_1
\wedge\dots\wedge\frac{\sigma }{p}d_{\log}u_N\right ).$$
It remains to note that $\kappa (u_0)=1$ 
implies (use Lemma \ref{L3.2}) 
that $r=\log(u_0)/H_0\in\c L^{(-1)}$ and, therefore, 
$\sigma (r)=(\sigma /p)\log(u_0)/(\sigma /p)H_0$. 
\end{proof}

\begin{Cor} The form $\widetilde{V}$ 
factors through the projection 
$\kappa :\c R\longrightarrow F^*$ and defines an 
$(N+1)$-linear skew-symmetric form $\bar V$ on $F^*$ 
with values in $\Z /p^M$. 
\end{Cor}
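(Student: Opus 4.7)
The plan is to observe that this is essentially a formal consequence of the preceding proposition together with the already-established multilinearity and skew-symmetry of $\widetilde V$. The key point is that $\c R \to F^*$ is surjective (by construction of $\kappa$, since $\pi_1,\dots,\pi_N$ together with the Teichm\"uller representatives and principal units generate $F^*$), so defining $\bar V$ on $F^*$ amounts to showing $\widetilde V$ is constant on fibres of $\kappa^{N+1}$.

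The first step is to upgrade the preceding proposition: if \emph{any} entry $u_i$ satisfies $\kappa(u_i)=1$, then $\widetilde V(u_0,\dots,u_N)=0$. This follows immediately from skew-symmetry, which lets us move $u_i$ into the $0$th slot at the cost of a sign $\pm 1$, whereupon the proposition applies.

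Next, I would exploit multilinearity. Suppose $u_0,\dots,u_N$ and $v_0,\dots,v_N$ are elements of $\c R$ with $\kappa(u_i)=\kappa(v_i)$ for all $i$. Set $w_i = u_i v_i^{-1}\in\c R$, so that $\kappa(w_i)=1$. Then, expanding using $(N+1)$-linearity entry-by-entry,
$$\widetilde V(u_0,\dots,u_N) - \widetilde V(v_0,\dots,v_N) = \sum_{i=0}^N \widetilde V(v_0,\dots,v_{i-1},w_i,u_{i+1},\dots,u_N),$$
and each term on the right vanishes by the extended proposition above. Hence $\widetilde V$ depends only on $\kappa(u_0),\dots,\kappa(u_N)$, and since $\kappa:\c R\to F^*$ is surjective, the assignment $\bar V(\kappa(u_0),\dots,\kappa(u_N)) := \widetilde V(u_0,\dots,u_N)$ gives a well-defined map $(F^*)^{N+1}\to\Z/p^M$. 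Its $(N+1)$-linearity and skew-symmetry are inherited from the corresponding properties of $\widetilde V$ already noted after \eqref{E3.3}.

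There is no real obstacle here — all the work has been done in Proposition~\ref{*3.1} (skew-symmetry), the displayed equation \eqref{E3.3} together with the observation that $\Omega$ is multilinear in $d_{\log}u_i$ (multilinearity), and the preceding proposition (vanishing when one argument lies in $\ker\kappa$). The only thing one might want to double-check is surjectivity of $\kappa$, which follows from Lemma~\ref{L3.2}: the images $\kappa(t_i)=\pi_i$ together with Teichm\"uller lifts and principal units of the form $1+\sum_{a>\bar 0}w_a\underline{t}^a$ hit all local parameters, all representatives of $k^*$, and (via $\kappa$ applied to $1+\m^0$) all of $1+\m_F$, hence generate $F^*$.
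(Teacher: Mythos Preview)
Your argument is correct and is exactly the standard way one deduces such a corollary from the preceding proposition; the paper itself gives no separate proof, treating it as immediate from the proposition together with the already-established $(N+1)$-linearity and skew-symmetry of $\widetilde V$. One small remark: surjectivity of $\kappa:\c R\to F^*$ is not a consequence of Lemma~\ref{L3.2} (which concerns the kernel) but is asserted directly at the beginning of Section~\ref{S3}, where $\kappa$ is introduced as an ``epimorphic'' map.
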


We now verify the Steinberg relation for $\widetilde{V}$. 

\begin{Prop} If $u_1+u_0=1$ then $\widetilde{V}(u_0,u_1,\dots u_N)=0$. 
 \end{Prop}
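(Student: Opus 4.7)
The plan is to use $u_0+u_1=1$ to collapse $\Omega$ to two terms, extract an exact piece whose $H^{-1}$-residue is controlled by Lemma~\ref{L3.5}, and reduce the remainder to a form treated by Proposition~\ref{P3.3} and Proposition~\ref{P3.7}. Writing $\omega_i := d_{\log}u_i$, $\omega_i^{(p)} := (\sigma/p)d_{\log}u_i$ and $\omega' := \omega_2\wedge\cdots\wedge\omega_N$, the identities $du_0+du_1 = 0$ and $d\sigma u_0+d\sigma u_1 = 0$ (the second from $\sigma u_0+\sigma u_1=1$) yield
\[
\omega_0\wedge\omega_1 = \tfrac{du_0}{u_0}\wedge\tfrac{-du_0}{1-u_0}=0 \qquad \text{and} \qquad \omega_0^{(p)}\wedge\omega_1^{(p)}=0.
\]
Hence every term in $\Omega=\sum_{i=0}^N(-1)^if_i\,\alpha_{\widehat\imath}$ with $i\geq 2$ contains both $\omega_0^{(p)}$ and $\omega_1^{(p)}$ as wedge factors and vanishes, leaving
\[
\Omega = f_0\,\omega_1\wedge\omega' - f_1\,\omega_0^{(p)}\wedge\omega'.
\]

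To extract an exact piece I apply the Leibniz rule with $df_i=\omega_i-\omega_i^{(p)}$ and $d\omega_j=0$:
\[
d(f_0f_1\omega') = \bigl(f_0\omega_1 + f_1\omega_0 - f_0\omega_1^{(p)} - f_1\omega_0^{(p)}\bigr)\wedge\omega',
\]
so that
\[
\Omega = d(f_0f_1\omega') - \bigl(f_1\omega_0 - f_0\omega_1^{(p)}\bigr)\wedge\omega'.
\]
Since $f_0,f_1\in\m^0$ and $dH_0\in p^M\Omega^1_{O^0}$ (Lemma~\ref{L3.4}), the identity $\Res(d\eta/H)=-\Res(\eta\wedge dH/H^2)$ puts the exact part in the scope of Lemma~\ref{L3.5}, and its $H^{-1}$-residue lies in $p^MW(k)$. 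It remains to show that $\Tr\,\Res\bigl((f_1\omega_0-f_0\omega_1^{(p)})\wedge\omega'/H\bigr) \equiv 0 \pmod{p^M}$.

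For the remainder I use the collinearities obtained by differentiating $u_0+u_1=1$ and its $\sigma$-image, namely $u_0\omega_0+(1-u_0)\omega_1=0$ and $\sigma u_0\,\omega_0^{(p)}+\sigma u_1\,\omega_1^{(p)}=0$, to substitute $\omega_0=-(u_1/u_0)\omega_1$ and $\omega_1^{(p)}=-(\sigma u_0/\sigma u_1)\omega_0^{(p)}$. This turns the remainder into
\[
\bigl(-(u_1/u_0)f_1\,\omega_1 + (\sigma u_0/\sigma u_1)f_0\,\omega_0^{(p)}\bigr)\wedge\omega',
\]
which has the same structure as $\Omega$ with the pair $\bigl((u_1/u_0)f_1,\,(\sigma u_0/\sigma u_1)f_0\bigr)$ in place of $(f_0,f_1)$. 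The Artin--Hasse-type identity $f_0+f_1=(1/p)\log\bigl((u_0u_1)^p/\sigma(u_0u_1)\bigr)$ combines the two surviving coefficients into the $\log v - (\sigma/p)\log v$ pattern for $v:=u_0u_1\in\c R$, matching the form $\Omega'$ from Proposition~\ref{P3.3}. Applying Proposition~\ref{P3.7} to swap $H$ and $(\sigma/p)H$ in the denominator modulo $p^M$, and then invoking the previous proposition (the $\kappa(u_0)=1$ case), completes the argument.

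The chief obstacle is the last step: realigning the four-term residual $(f_1\omega_0-f_0\omega_1^{(p)})\wedge\omega'/H$ with the $\Omega'$ of Proposition~\ref{P3.3} for $v=u_0u_1$, which requires juggling the collinearities with the Artin--Hasse identity and exploiting the $H\leftrightarrow(\sigma/p)H$ symmetry from Proposition~\ref{P3.7}. This is essentially the classical Vostokov Steinberg computation re-organised through the $f_i$-variables of this paper; the delicate point is ensuring that each intermediate form lies in $\c L^0$ so that its residue is well-defined and the $p^M$-bounds propagate correctly.
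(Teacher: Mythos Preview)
Your reduction of $\Omega$ to the two-term expression $(f_0\,\omega_1-f_1\,\omega_0^{(p)})\wedge\omega'$ is correct and is exactly the starting point of the paper's argument. The gap is in what follows. After you subtract $d(f_0f_1\omega')$ you are left with $-(f_1\omega_0-f_0\omega_1^{(p)})\wedge\omega'$, and your plan to absorb this via Proposition~\ref{P3.3} and ``the previous proposition'' cannot work: both of those results hinge on having an element $v$ with $\kappa(v)=1$ so that $\log v\in\c L^0$, and $v=u_0u_1=u_0(1-u_0)$ does not satisfy this. Moreover, the substitutions $\omega_0=-(u_1/u_0)\omega_1$ etc.\ introduce coefficients such as $u_1/u_0$ which, for $u_0\in\m^0$, lie outside $O^0$; so the intermediate forms you write need not belong to $\c L^0$ and the $p^M$-bounds from Lemma~\ref{L3.5} and Proposition~\ref{P3.7} do not apply to them. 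The closing paragraph is really a sketch of hopes rather than an argument.

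The paper does not try to whittle the two-term form down further; it shows directly that the $1$-form $f_0\,d_{\log}u_1-f_1\,(\sigma/p)d_{\log}u_0$ is \emph{itself} exact, equal to $dF$ with an explicit primitive
\[
F=\Li_2(u_0)-\frac{1}{p^2}\Li_2(\sigma u_0)+\log(1-u_0)\,l(u_0),
\]
obtained from the dilogarithm identity $l(u_0)\,d_{\log}(1-u_0)=d\bigl(\Li_2(u_0)+\log(1-u_0)\,l(u_0)\bigr)$ together with its $\sigma$-twist. The substance of the proof is then the integrality check $F\in O^0$: one expands $F=\sum_{m,s}F_{ms}u_0^{mp^s}$ and recognises each $F_{ms}$ as the value at $X=-ml(u_0)\in\m^0$ of the $p$-integral series $(p^sX)^{-2}(1+p^sX-\exp(p^sX))$. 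This dilogarithm primitive and its integrality are the key ideas you are missing; once $F\in O^0$ is known, Lemma~\ref{L3.5} finishes the proof.
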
 

\begin{proof} As usually, it is sufficient to verify this property 
for $u_0\in \m ^0$. 
Then by Lemma \ref{L3.5}  
it will be sufficient to prove that 
\begin{equation}\label{E3.3a}
f_0d_{\log}u_1-f_1(\sigma /p)d_{\log}u_0=dF
\end{equation}
where $F\in O ^0$.

For any $u\in \c R$, set $l(u):=(1/p)\log (u^p/\sigma u)$. 
By computing in $L_0(\c F)\otimes\Q _p$ we obtain the identity 
$$l(u_0)d_{\log}(1-u_0)=
d(\Li _2(u_0)+\log (1-u_0)l(u_0))
$$
where $\Li _2(X)=\int\log (1-X)X^{-1}dX=
\sum _{n\geqslant 1}X^n/n^2$ is the dilogarithm function.
This identity implies that \eqref{E3.3a} holds with 
$$F=\Li _2(u_0)-(1/p^2)\Li _2(\sigma u_0)+\log (1-u_0)l(u_0).$$

It remains to prove that $F\in O ^0$. 

Using the expansions for $\Li _2(X)$ and $\log (1-X)$ we can 
rewrite $F$ as a double sum $F=\sum _{m,s}F_{ms}u_0^{mp^s}$, 
where:

--- the indices $s$ and $m$ run over the set of all 
non-negative integers with additional condition that 
$m$ is prime to $p$;

--- for all (prime to $p$) indices $m$, we have 
$$F_{m0}=1/m^2-(1/m)l(u_0)$$ 
and for all $s\geqslant 1$, 
$$F_{ms}=\frac{1}{m^2p^{2s}}
\left (1-\frac{\sigma u_0^{mp^{s-1}}}{u_0^{mp^s}}\right )-
\frac{1}{mp^s}l(u_0).$$
Clearly, $F_{m0}\in O^0$ and $F_{ms}$ appears as 
the result of the substitution of $X=-ml(u_0)\in\m ^0$ into 
the $p$-integral power series of the function 
$(p^sX)^{-2}(1+p^sX-\exp(p^sX))$. 
Therefore, all $F_{ms}\in O^0$ and $F\in O ^0$. 
\end{proof}

\begin{Cor} \label{C3.11} 
 $\widetilde{V}$ induces a bilinear continuous non-degenerate pairing 
$V:K_1(F)/p^M\times K_N(F)/p^M\longrightarrow \Z /p^M$, 
which factors through the canonical morphism 
of the left-hand side to $K_{N+1}(F)/p^M$. 
\end{Cor}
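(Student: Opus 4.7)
The bulk of the statement assembles results already established. By Proposition \ref{*3.1} together with Lemma \ref{L3.5}, the form $\widetilde V$ is $(N+1)$-linear and skew-symmetric on $\c R$; the Corollary preceding the present statement shows it descends to a form $\bar V$ on $(F^*)^{N+1}$ with values in $\Z/p^M$; and the Steinberg relation has just been verified. Therefore by the universal property of the Milnor $K$-group, $\bar V$ extends to a homomorphism $K_{N+1}(F)\to \Z/p^M$, and since its target is $p^M$-torsion this map factors through $K_{N+1}(F)/p^M$. Composing with the canonical product $K_1(F)/p^M \otimes K_N(F)/p^M \to K_{N+1}(F)/p^M$ yields the bilinear pairing $V$ with the desired factorization. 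Continuity with respect to the $P$-topology is clear from the explicit formula \eqref{E3.3}, since $\Tr\circ\Res$ and the Shafarevich-style logarithm $u\mapsto (1/p)\log(u^p/\sigma u)$ are sequentially $P$-continuous on the relevant domains in $\c L$.

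The remaining non-trivial content is non-degeneracy, which I would establish by computing $V$ on the topological generators of both sides. For $F^*/F^{*p^M}$ use the Shafarevich-type basis from Subsection \ref{S1.2}: the parameters $\pi_1,\dots,\pi_N$, the units $\eta_{jb}=1+[\theta_j]\underline{\pi}^b$ with $\bar 0<b<e^*$ and $b\notin p\Z^N$, and the primary element $\epsilon_0$. For $K_N^t(F)/p^M$ use the generators $\varepsilon_0=\{\pi_1,\dots,\pi_N\}$, $\varepsilon_{ja}$, and $\varepsilon_{ie^*}$ from Subsection \ref{S1.3}. After lifting each $\eta_{jb}$ to $1+[\theta_j]\underline{t}^b\in\c R$ and each $\pi_i$ to $t_i$, the value $\widetilde V$ becomes a residue that can be evaluated term by term. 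Corollary \ref{C3.6} and Proposition \ref{P3.7} reduce the evaluation modulo $p^M$ to its leading order, so the various cross terms involving the twisted factor $(\sigma/p)d_{\log}$ in the definition of $\Omega$ all disappear. The surviving contributions produce essentially Kronecker-delta pairings: $V(\eta_{jb},\varepsilon_{j'a'})\not\equiv 0\pmod{p^M}$ precisely when $j=j'$ and $a'+b=e^*$ (with the matching index $i(a')$), while $V(\epsilon_0,\varepsilon_0)$ is a unit in $\Z/p^M$ by the $p^M$-primary nature of $\epsilon_0$, and $V(\pi_i,\varepsilon_{ie^*})$ is likewise a unit. The pairing matrix on these bases is therefore a generalised permutation matrix with unit entries, which yields non-degeneracy.

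The main obstacle is verifying this block structure of the pairing matrix. Two technical points require care: first, that all the auxiliary terms in the expansion of $\Omega/H$ that mix plain $d_{\log}$ with twisted $(\sigma/p)d_{\log}$ vanish modulo $p^M$ when paired against basis elements, which is precisely what Corollary \ref{C3.6} and Proposition \ref{P3.7} are designed to ensure; and second, that the rearrangements needed to bring an arbitrary basis pair into the canonical shape with $\pi_1,\dots,\pi_N$ in the rightmost $N$ slots only change the residue by a tracked sign, which is guaranteed by the skew-symmetry of Proposition \ref{*3.1}. Once these are in place, the non-degeneracy reduces to a finite collection of monomial residue computations, each of which is elementary.
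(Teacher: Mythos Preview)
Your approach is essentially the same as the paper's: everything except non-degeneracy is a direct consequence of the results already proved, and non-degeneracy is checked by computing $V$ on the topological generators from Subsections \ref{S1.2} and \ref{S1.3}. The paper's proof is in fact terser than yours --- it simply declares the computation routine and singles out the crucial identity $V(\epsilon_0,\{\pi_1,\dots,\pi_N\})=1$, which you also identify (your $V(\epsilon_0,\varepsilon_0)$ is a unit; your $V(\pi_i,\varepsilon_{ie^*})$ follows from this by skew-symmetry).

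One caution: your structural claim that the pairing matrix is a \emph{generalised permutation matrix}, with $V(\eta_{jb},\varepsilon_{j'a'})\ne 0$ exactly when $j=j'$ and $a'+b=e^*$, is too optimistic. The index $e^*=ep/(p-1)$ need not lie in $\Z^N$, so the condition $a'+b=e^*$ is not even well-posed in general; and the actual residue computation for $\Omega/H_0$ involves the full expansion of $1/H_0$ from Lemma \ref{L3.4}b), not just a single monomial $\underline{t}^{-e^*}$. What one actually gets is a matrix that is \emph{triangular} with unit diagonal with respect to a suitable ordering of the generators (coming from the filtration $U_F^c$), which is enough for invertibility. The paper sidesteps this by not spelling out the matrix shape at all, relying only on the key value $V(\epsilon_0,\varepsilon_0)=1$ and leaving the rest as routine.
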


\begin{proof} The only thing to verify is non-degeneracy. 
 This can be done by routine calculations with the corresponding 
topological generators from Subsections \ref{S1.2} and \ref{S1.3}. 
The most important fact is that 
$$V(\epsilon _0, \{\pi _1,\dots ,\pi _N\})=1,$$
 where 
$\epsilon _0=\theta (\alpha _0H_0)$ is the $p^M$-primary element 
from Remark 1) of Subsection \ref{S1.2}.  
\end{proof}

\begin{remark} 1) The above construction of the pairing $V$ 
depends on the choice of a primitive $p^M$-th root of unity 
$\zeta _M$. However, Vostokov's pairing 
appears in the form 
\begin{equation}\label{E:Vost}
\c V:K_1(F)/p^M\times K_N(F)/p^M\longrightarrow \langle \zeta _M\rangle .
 \end{equation} 
where for any $\alpha\in K_1(F)/p^M$ and $\beta\in K_N(F)/p^M$, 
$\c V(\alpha ,\beta )=\zeta _M^{V(\alpha ,\beta )}$, 
and this pairing is independent of the choice of $\zeta _M$.

2) The above Corollary immediately implies that 
Vostokov's pairing \eqref{E:Vost} coincides with the $M$-th Hilbert symbol 
$$(\ ,\ )_M:K_1(F)/p^M\times K_N(F)/p^M
\longrightarrow \langle\zeta _M\rangle $$
for the field $F$. 
Indeed, the norm property of the Hilbert symbol implies that it factors 
through the canonical morphism $K_1(F)/p^M\times K_N(F)/p^M$ 
to $K_{N+1}(F)/p^M$. Therefore, it is sufficient to verify 
that the Hilbert pairing is equal to $\zeta _M$ on the generator  
$\{\varepsilon _0,\pi _1,\dots ,\pi _n\}$ of 
$K_{N+1}(F)/p^M$. But this is exactly the 
basic property of the $p^M$-primary element 
$\theta (\alpha _0H_0)$, cf. Subsection \ref{S5.5}. 
\end{remark}


\section{The field-of-norms functor}\label{S4}


\subsection{The field-of-norms functor}\label{S4.1}

$N$-dimensional local fields are  
special cases of the $(N-1)$-big fields 
used in \cite{Sch}  
to construct a higher dimensional analogue of 
the field-of-norms functor. 
The main ideas of this construction are as follows. 

Suppose $K$ is an $N$-dimensional local field and 
$v_K:K\longrightarrow 
\Z\cup\{\infty \}$ is the (first) valuation of $K$. 
If $\bar K$ is an algebraic closure of $K$, 
denote by the same symbol the unique 
extension of $v_K$ to $\bar K$. 
For any $c\geqslant 0$, let $\p _K^c=
\{x\in\bar K\ |\ v_K(x)\geqslant c\}$. If $L$ is a 
field extension of $K$ in $\bar K$, we use 
the simpler notation $\c O_L/\p ^c$ instead of 
$\c O_L/(\p^c\cap\c O_L)$.
Clearly, if $[L:K]<\infty $ and $e(L/K)$ 
is the ramification index of $L/K$, 
then  $\p ^c_K=\p ^{ce(L/K)}_L$.

An increasing fields tower $K_\d=(K_n)_{n\geqslant 0}$, where $K_0=K$, 
is strictly deeply ramified (SDR) with parameters $(n_0,c)$,  if for 
$n\geqslant n_0$, one has 
$[K_{n+1}:K_n]=p^N$, $c\leqslant v_K(p)$  and 
there is a surjective map 
$\Omega^1_{\O_{F_n+1}/\O_{F_n}} \To (\O_{F_{n+1}}/\p ^c)^d$ 
or, equivalently, 
the $p$-th power map induces epimorphic maps 
\begin{equation} \label{E4.1}
i_N(K_\d ):\c O_{K_{n+1}}/\p _K^c\longrightarrow\c O_{K_n}/\p ^c_K.
\end{equation}
This means that for $n\geqslant n_0$, $K_{n+1}^{(N)}=K_n^{(N)}$ and 
there are systems of local parameters $u_1^{(n)},\dots ,u_N^{(n)}$ in 
$K_n$ such that for all $1\leqslant i\leqslant N$, 
$u_i^{(n+1)p}\equiv u_i^{(n)}
\,\mathrm{mod}\,\p_K^c$.

The field-of-norms functor $X$ associates to any SDR tower $K_\d $ 
an $N$-dimensional field $\c K=X(K_{\d })$ of characteristic $p$ such that its 
$N$-dimensional valuation ring $\c O_{\c K}$ coincides with 
$\underset{i_n(K_{\d })}\varprojlim\c O_{K_n}/\p _K^c$. 
Then for $n\geqslant n_0$, we have the following properties:
\medskip 

--- the last residue fields of $\c K$ and $K_n$ coincide;
\medskip 

--- the natural projections from $\c O_{\c K}$ to 
$O_{K_n}/\p _K^c$ induce isomorphisms of unitary rings 
\begin{equation}\label{E4.1a}
\c O_{\c K}/\p _{\c K}^{c_n}\simeq\c O_{K_n}/\p _K^c
 \end{equation} 
where $c_n=p^{n-n_0}e(K_{n_0}/K)$;
\medskip 

--- if $\bar u_1,\dots ,\bar u_N$ is a system of local parameters in $\c K$ then there are 
systems of local parameters $u_1^{(n)},\dots ,u_N^{(n)}$ in $K_n$ such that 
for $1\leqslant i\leqslant N$, $\underset{n}\varprojlim\, u_i^{(n)}=\bar u_i$.
\medskip 

Suppose $L$ is a finite extension of $K$ in $\bar K$. Then the tower 
$L_\d =(LK_n)_{n\geqslant 0}$ is again SDR and $X(L_{\d })=\c L$ 
is a separable extension of $\c K$ of degree $[LK_n:K_n]$, 
where $n\gg 0$. The extension $\c L/\c K$ is Galois if and only if,  
for $n\gg 0$, $LK_n/K_n$ is Galois.  An analogue of the identification 
\eqref{E4.1a} can be used to identify 
$\Gal (\c L/\c K)$ with $\Gal (LK_n/K_n)$. 

Finally, $X(\bar K):=\underset{\substack{\longrightarrow\\L}}\lim X(L_{\d})$ 
is a separable closure $\c K_{sep}$ of $\c K$ and the functor 
$X$ identifies 
$\Gal (\c K_{sep}/\c K)$ with $\Gal (\bar K/K_{\infty })$, 
where $K_{\infty }=\underset{\substack{\longrightarrow \\ n}}\lim K_n$.

\subsection{Applications to $K$-groups.}\label{S4.2}

Suppose there is an SDR tower $K_{\d }=(K_n)_{n\geqslant 0}$ with 
parameters $(n_0,c)$ and the ring epimorphisms $i_n=i_n(K_{\d }):
\c O_{K_{n+1}}/\p ^c_{K}\longrightarrow \c O_{K_n}/\p ^c_K$. 
Define for $n\geqslant n_0$, the morphisms 
$$j_n=j_n(K_{\d }):K_N^t(K_{n+1})/U^c_KK_N^t(K_{n+1})
\longrightarrow K_N^t(K_n)/U_K^cK_N^t(K_n).$$
as follows. 
Choose systems of local parameters 
$u_1^{(n)},\dots ,u_N^{(n)}$ of $K_n$ such that 
for $1\leqslant i\leqslant N$, 
$u_i^{(n+1)p}\equiv u_i^{(n)}\,\mathrm{mod}\,\p ^c_K$. 
Consider the lifts $\hat\imath _n$ of the morphisms $i_n$:
$$\hat\imath _n:\sum _{a}[\theta _a]\underline{\pi }^{(n+1)a}
\mapsto\sum _{a} [\theta _a^p]\underline\pi ^{(n)a},$$
where $a=(a_1,\dots ,a_N)\in\Z ^N$, $\theta _a\in K^{(N)}$, and 
$\underline{\pi }^{(n)a}=\pi _1^{(n)a_1}\dots \pi _N^{(n)a_N}$. 
Then $\hat\imath _n(K_{n+1}^*)\subset K_n^*$ and we may  
consider 
$$\hat \imath_n^{N}:K_{n+1}^{*\otimes N}\longrightarrow K_N^t(K_n)$$
induced by  
$\alpha _1\otimes\dots \otimes\alpha _N
\mapsto\{\hat\imath _n(\alpha _1),\dots ,\hat\imath _n(\alpha _N)\}$. 
Note that for any $\alpha ,\beta\in K_{n+1}^*$, we have  

--- $\hat\imath _n(\alpha\beta )=\hat\imath _n(\alpha )\hat\imath _n(\beta )
\gamma _1$ with $\gamma _1\in\p _K^c$;

--- $\hat\imath _n(\alpha +\beta )=\hat\imath _n(\alpha )\gamma _2+
\hat\imath _n(\beta )$ with $v_K(\alpha )\leqslant v_K(\beta )$ and 
$\gamma _2\in\p _K^c$.

This implies that the images of 
the relations for the Milnor $K$-group $K_N(K_{n+1})$ 
lie in $U^c_KK_N^t(K_n)$. Therefore, 
$\hat\imath _n^N$ factors through the natural projection to 
$K_N^t(K_{n+1})$ and we can proceed modulo 
the subgroup 
$U^c_KK_N^t(K_{n+1})$ because it is mapped to 
$U^c_KK_N^t(K_n)$. It can be seen that the morphisms 
$j_n$, $n\geqslant n_0$, depend only on the tower $K_{\d }$ 
and its parameters $(n_0,c)$.

Note that 
\begin{equation}\label{E4.2}
(K_N^t(K_n)/U^c_KK_N^t(K_n),j_n)_{n\geqslant n_0}
\end{equation} 
is a projective system. 
Consider the system of local parameters 
\linebreak 
$\bar u_1=\underset{n}\varprojlim\,u _1^{(n)},\dots ,\bar u_N=
\underset{n}\varprojlim\,u _N^{(n)}$ for 
 $\c K=X(F_{\d })$. Similarly to the procedure of 
constructing the morphisms $j_n$, we use the identifications 
$\c O_{\c K}/\p ^{c_n}_{\c K}\simeq\c O_{K_n}/\p ^c_K$ 
to construct  isomorphisms of topological $K$-groups 
$$\iota _n:K_N^t(\c K)/U^{c_n}_{\c K}K_N^t(\c K)\longrightarrow K_N^t(K_n)/U_K^cK_N^t(K_n).$$
These morphisms are compatible with the morphisms $j_n$ 
introduced above 
and determine the isomorphism
$$\iota ^{(c)}:K_N^t(\c K)\longrightarrow
\underset{n}\varprojlim K_N^t(K_n)/U^c_KK_N^t(K_n).$$

For any $0<c'<c$, there is an induced 
projective system 
\begin{equation}\label{E4.2a}
(K_N^t(K_n)/U^{c'}_KK_N^t(K_n),j'_n)_{n\geqslant n_0}.
\end{equation} 
Its inverse limit coincides with $K_N^t(\c K)$ and the composition 
of $\iota ^{(c)}$ with the natural projection from 
\eqref{E4.2} to \eqref{E4.2a} coincides with $\iota ^{(c')}$. 
\medskip 

\subsection{Special SDR towers}\label{S4.3}

We need the following additional 
assumption about the SDR towers $K_{\d }$.

\begin{definition} An SDR tower $K_{\d}$ will be 
called special if for any $n\geqslant n_0$, 
there is a fields tower of extensions of relative degree $p$ 
$$K_n=K_n^0\subset K_n^1\subset\dots
\subset K_n^{N-1}\subset K_n^N=K_{n+1}.$$
\end{definition} 

Our main applications are related to the 
following example of a special SDR-tower.

\begin{definition} The tower $F_{\d }^0=(F^0_n)_{n\geqslant 0}$ 
is very special if $F^0_0=F$ and for all $n\geqslant 0$, the 
field $F^0_{n}$ has a system of local parameters 
$\pi ^{(n)}_1,\dots \pi ^{(n)}_N$ such that for $1\leqslant i\leqslant N$, 
$\pi _i^{(n+1)p}=\pi _i^{(n)}$.
\end{definition} 

Clearly, a very special tower $F^0_{\d }$ is SDR with parameters $(0,v_F(p))$ 
and $X(F^0_{\d })=\c F$. 
In this case we have also an isomorphism 
$K_N(\c F)\simeq\underset{n}\varprojlim K_N^t(F^0_n)$ 
induced in terms of generators 
from Subsection \ref{S1.3} 
by the morphisms $K^t_N(\c F)\longrightarrow K_N^t(F^0_n)$ 
such that:
\medskip 

--- $\{\bar t_1,\dots ,\bar t_N\}
\mapsto \{\pi _1^{(n)},\dots ,\pi _N^{(n)}\}$;
\medskip 

--- $\{1+[\theta _j]\underline{\bar t}^{\ a},\bar t_1,\dots ,\bar t_{i(a)-1},
\bar t_{i(a)+1},\dots ,\bar t_N\}\mapsto $ \newline 
$$\{ 1+[\theta ^{p^{-n}}_j]\underline{\pi }^{(n)a},
\pi ^{(n)}_1,\dots ,\pi ^{(n)}_{i(a)-1},
\pi ^{(n)}_{i(a)+1},\dots ,\pi ^{(n)}_N\}
$$
This means that 
in the case of a very special tower $F^0_{\d }$, the group 
$K^t_N(\c F)$ coincides with the limit of 
the projective system 
$(K_N^t(F^0_n))_{n\geqslant 0}$, where the connecting 
morphisms are the norm maps 
$N_{F^0_{n+1}/F^0_n}:K_N^t(F^0_{n+1})\longrightarrow K_N^t(F^0_n)$. 
We shall show  
that a similar property holds for any 
special SDR tower.

\begin{Prop} \label{P4.1} Suppose $K_{\d}$ is a special 
SDR tower with parameters $(n_0,c)$. Then for any 
$c_1\geqslant c$ and $n\geqslant n_0$, 
$$N_{K_{n+1}/K_n}U^{c_1}_K(K_N^t(K_{n+1}))
\subset U_K^{c_2}(K_N^t(K_n)),$$
where $c_2=c_1+c/p-e(K_{n+1}/K_0)^{-1}$. 
\end{Prop}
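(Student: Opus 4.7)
The plan is to reduce to single degree-$p$ norm estimates along the refined tower and accumulate the gain. By the hypothesis that $K_{\d}$ is special we have intermediate fields $K_n = K_n^0 \subset K_n^1 \subset \dots \subset K_n^N = K_{n+1}$, each extension of degree $p$, and each totally ramified (no residue-field extension in the SDR setting). By transitivity of the norm, $N_{K_{n+1}/K_n}$ factors as the composition of $N_{K_n^{i+1}/K_n^i}$ for $0\leqslant i<N$, so it suffices to establish for each step an estimate
$$N_{K_n^{i+1}/K_n^i}\bigl(U^{d_i}_K K_N^t(K_n^{i+1})\bigr) \subset U^{d_{i+1}}_K K_N^t(K_n^i)$$
with $d_0 = c_1$ and $d_N = c_2 = c_1 + c/p - 1/e(K_{n+1}/K_0)$. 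A natural candidate is to distribute the total gain across the $N$ steps in accordance with the ramification indices $e(K_n^i/K_0) = p^i\,e(K_n/K_0)$.

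For a single totally ramified degree-$p$ extension $L/K'$ in the special tower, the plan is to reduce an arbitrary generator of $U^d_K K_N^t(L)$ to a product of symbols $\{1+a,\beta_2,\dots,\beta_N\}$ with $v_K(a)\geqslant d$ and the $\beta_j$ in $L^*$. Decomposing each $\beta_j = \tau_L^{k_j}u_j$ with $\tau_L$ a uniformizer of $L$, the factor $\tau_L$ can be replaced by an element of $K'^*$ modulo a correction in $1+\m_L$, using the SDR relation $\tau_L^p\equiv\tau_{K'}\,\mathrm{mod}\,\p^c$ inherited from the compatible choice of parameters. After iterating the Steinberg identity \eqref{E1.3'}, all entries except the first can be arranged to lie in $K'^*$, at which point property (1) of Subsection \ref{S1.4} applies and the computation reduces to an estimate of $v_K(N_{L/K'}(1+a)-1)$.

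For that estimate I would expand $N_{L/K'}(1+a) = 1 + \Tr_{L/K'}(a) + s_2(a) + \dots + s_p(a)$, where the higher elementary symmetric polynomials satisfy $v_K(s_k(a))\geqslant kd$. The main term $\Tr_{L/K'}(a)$ gains precisely because of the SDR property: writing $a = b^p + r$ with $b\in K'$ and $v_K(r)\geqslant c$ (possible when $d\geqslant c$, via a Teichmüller-style expansion combined with $\pi^{(n+1)p}\equiv\pi^{(n)}\,\mathrm{mod}\,\p^c$), one gets $\Tr_{L/K'}(a) = p\,b^p + \Tr_{L/K'}(r)$. The first summand improves the valuation by $v_K(p)$, and the trace of the remainder gains by $c - d$ adjusted by the local different, which in the SDR setting is controlled by $1/e(L/K_0)$. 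Matching these against the Steinberg cross-terms yields the per-step gain.

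The main obstacle will be the bookkeeping: choosing the intermediate thresholds $d_i$ so that the reduction to the form $\{1+a,\beta_2',\dots,\beta_N'\}$ preserves the correct filtration level at each step, and verifying that the accumulated gain equals exactly $c/p - 1/e(K_{n+1}/K_0)$ rather than some smaller quantity. A further delicate point is that the intermediate fields $K_n^i$ are not themselves endowed with an SDR structure over $K_n^{i-1}$, so the Frobenius-like approximation used in the trace estimate must be extracted from the full-step SDR condition along $K_n\subset K_{n+1}$ and then reapportioned to each degree-$p$ stage; making this reapportionment rigorous while respecting the topology on $K_N^t$ from Subsection \ref{S1.3} is where the argument requires care.
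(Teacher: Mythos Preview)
Your opening move---factoring $N_{K_{n+1}/K_n}$ through the degree-$p$ tower $K_n=L_0\subset L_1\subset\dots\subset L_N=K_{n+1}$---matches the paper. But the paper does \emph{not} distribute the gain across the $N$ steps, and the mechanism you propose for each step is where the gap lies.

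The paper first fixes compatible systems of parameters $\eta_1,\dots,\eta_N$ in $L_0$ and $\eta'_1,\dots,\eta'_N$ in $L_N$ together with a permutation $(j_1,\dots,j_N)$ so that $L_i=L_{i-1}(\eta'_{j_i})$ and $\eta_i^{\prime p}\equiv\eta_i\bmod\p_K^c$. It then writes down explicit topological generators $\alpha_{a\theta}$ of $U_K^{c_1}(K_N^t(K_{n+1}))$ whose principal-unit entry $\varepsilon_{a\theta}$ is built from a mixture of primed and unprimed parameters so that $\varepsilon_{a\theta}$ \emph{already lies in the intermediate field} $L_{s(a)}$, for an index $s(a)$ read off from the $p$-divisibility pattern of $a$. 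With this choice, the norm from $L_N$ down to $L_{s(a)}$ leaves the first slot fixed and merely replaces each $\eta'_{j_k}$ by $\eta_{j_k}$ in the remaining slots, up to a unit in $U_K^{c/p}$ (this is Lemma~4.2(a), proved by noting that all conjugates of $\eta'_{j_i}$ over $L_{i-1}$ are congruent $\bmod\,\p_K^{c/p}$). The entire gain $c/p-v_K(\eta'_{j_{s(a)}})$ is then extracted in the \emph{single} step $L_{s(a)}\to L_{s(a)-1}$, where the other slots are already in $L_{s(a)-1}$ and only $\varepsilon_{a\theta}$ is normed: the conjugate-closeness gives $N(\varepsilon_{a\theta})\equiv\varepsilon_{a\theta}^p\bmod\eta_{j_{s(a)}}^{\prime -1}\p_K^{c_1+c/p}$ (Lemma~4.2(b)).

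Your per-step plan runs into exactly the two obstacles you flag, and neither is cosmetic. First, your rearrangement ``all entries except the first can be arranged to lie in $K'^*$'' via Steinberg must be carried out at every one of the $N$ steps, and each such rearrangement shifts mass into the first slot in a way you have not bounded; the paper sidesteps this by choosing generators once, globally. Second, your trace estimate $a=b^p+r$ with $b\in K'$ and $v_K(r)\geqslant c$ needs surjectivity of the $p$-th power map $\bmod\,\p_K^c$ at each intermediate step; the SDR hypothesis furnishes this only for the full step $K_n\to K_{n+1}$, and the permutation setup gives just \emph{one} parameter congruence $\eta_{j_i}^{\prime p}\equiv\eta_{j_i}$ per step, not a full Frobenius approximation for arbitrary elements of $L_i$. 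The paper's conjugate-closeness argument uses only that single congruence and so avoids the issue.

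In short, the missing idea is the tower-adapted choice of generators that localises the norm computation to one step; once you have that, the estimate is a two-line lemma rather than an $N$-fold bookkeeping exercise.
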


\begin{proof} Choose a field tower 
$K_n=L_0\subset L_1\subset \dots\subset L_N=K_{n+1}$, 
where for $1\leqslant i\leqslant N$, $[L_i:L_{i-1}]=p$. One can show 
the existence of:

--- a system of local parameters $\eta _1,\dots ,\eta _N$ of $L_0$;

--- a system of local parameters $\eta _1',\dots ,\eta _N'$ of $L_N$;

--- a permutation $\left (\begin{array}{cccc} 1&2&\dots &N\\
j_1&j_2&\dots &j_N\end{array}
\right )$
\newline 
such that for $1\leqslant i\leqslant N$, $L_i=L_{i-1}(\eta '_{j_i})$ and 
$\eta _i^{\prime p}\equiv\eta _i\,\mathrm{mod}\,\p _K^c$.

For any field extension $E$ of $K$ in $\bar K$, 
set 
$U_K^{c_1}(E):=(1+\p _K^{c_1})\cap E$. 

\begin{Lem} \label{L4.2} 
For all $1\leqslant i\leqslant N$, we have  
 
a) $N_{L_i/L_{i-1}}(\eta _{j_i}')\,\equiv\,\eta _{j_i}
\mathrm{mod}\,U_K^{c/p}(L_{i-1})$;

b) if $a\in L_i$ and $a-1\in\p _K^{c_1}$ then 
$N_{L_i/L_{i-1}}(a)-1\in\p _K^{c_1+c/p-v_K(\eta '_{j_i})}$. 
\end{Lem}

\begin{proof} The congruences 
$\eta _{j_i}^{\prime p}\equiv\eta _{j_i}\,\mathrm{mod}\,\p _K^c$ 
imply that all conjugates of $\eta _{j_i}'$ over $L_{i-1}$ are 
congruent modulo $\p _K^{c/p}$. Therefore, 
$N_{L_i/L_{i-1}}(\eta '_{j_i})\,\equiv\eta _i\,\mathrm{mod}\,\p _K^{c/p}$. 
This gives part a) of Lemma.

For property b), we can assume that $a=1+[\theta ]\eta ^{\prime b}_{j_i}
\in U_K^{c_1}(L_i)$, where $\theta\in K^{(N)}$ and $b\in \N $. Then 
all conjugates of $a$ over $L_{i-1}$ are congruent modulo 
${\eta _{j_i}'}^{b-1}\p _K^{c/p}=
{\eta _{j_i}'}^{-1}\p _K^{c_1+c/p}$ and this implies that 
$N_{L_i/L_{i-1}}(a)\equiv a^p\equiv 1\,
\mathrm{mod}\,({\eta _{j_i}'}^{-1}\p _K^{c_1+c/p})$.
\end{proof}

We continue the proof of Proposition \ref{P4.1}. 

Consider the following system of topological generators of the group 
$U_K^{c_1}(K_N^t(K_{n+1}))$. For any $a=(a_1,\dots ,a_N)\in\Z ^N_{>\bar 0}$, set 
$\varepsilon _{a\theta }=1+[\theta ]\underline{\eta }^a$, where $\theta\in K^{(N)}$ and 
$$\underline{\eta }^a=
{\eta '}^{a_1}_{j_1}\dots {\eta '}^{a_{s(a)-1}}_{j_{s(a)-1}}
\eta ^{p^{-1}a_{s(a)+1}}_{j_{s(a)+1}}\dots \eta _{j_N}^{p^{-1}a_N}.$$
Here $0\leqslant s(a)\leqslant N$ is such that 
$a_N\equiv\dots\equiv a_{s(a)+1}\equiv 0\,\mathrm{mod}\,p$ but 
$a_{s(a)}\not\equiv 0\,\mathrm{mod}\,p$. 
(Note that if $s(a)=0$, i.e. $a\in p\Z ^N$, then 
$\underline{\eta }^a\in K_n$.) 
With this notation, $U_K^{c_1}(K_{n+1})$ is topologically 
generated by all $\varepsilon _{a\theta }$ such that 
$v_K(\underline{\eta }^a)\geqslant c_1$. Therefore, 
$U_K^{c_1}(K_N^t(K_{n+1}))$ is topologically generated by the symbols 
$$\alpha _{a\theta i}=\{\varepsilon _{a\theta },
\eta '_{j_1},\dots ,\eta '_{j_{i-1}},\dots ,
\eta '_{j_{i+1}},\dots ,\eta '_{j_N}\}$$
with $a\in\Z ^N$, $a>\bar 0$, $1\leqslant i\leqslant N$ and 
$\varepsilon _{a\theta }\in U_K^{c_1}(K_{n+1})$. 
If $a\notin p\Z ^N$ then it will be enough 
to keep in this list only the generators with $i=s(a)$ and denote them by  
$\alpha _{a\theta }:=\alpha _{a\theta s(a)}$. 

By Lemma \ref{L4.2}a) and the properties of the norm map from Subsection 
\ref{S1.4}, we obtain that  
$N_{L_N/L_{s(a)}}(\alpha _{a\theta })$ is congruent to the symbol 
$
\{\varepsilon _{a\theta },\eta '_{j_1},\dots ,\eta '_{j_{s(a)-1}},
\eta _{j_{s(a)+1}},\dots ,\eta _{j_N}\}$ modulo 
$U^{c'}(K_N^t(L_{s(a)}))$ if $a\notin p\Z ^N$ 
with 
$$c'=c_1+c/p-\max\{v_K(\eta '_{j_{s(a)+1}}),\dots ,v_K(\eta '_{j_N})\}\leqslant 
c_1+c/p-e(K_{n+1}/K)^{-1}$$
Applying Lemma \ref{L4.2}b) and using that 
$\varepsilon _{a\theta }\in L_{s(a)}$ but 
$\eta '_{j_1},\dots ,\eta '_{j_{s(a)-1}}\in L_{s(a)-1}$, 
we obtain  that the norm 
$N_{L_N/L_{s(a)-1}}(\alpha _{a\theta })\in 
U_K^{c_2}(K_N^t(L_{s(a)-1}))$ for $c_2=c_1+c/p-e(K_{n+1}/K)^{-1}$. This implies that 
$N_{L_N/L_0}(\alpha _{a\theta })\in U_K^{c_2}(K_N^t(L_0))$. 

Similarly, we can consider the case $a\in p\Z ^N$, where the 
index $i$ plays the role of $s(a)$ and we use that 
$\varepsilon _{a\theta }\in L_0\subset L_{i-1}$. 
\end{proof}

\begin{Cor} \label{C4.3} For a special SDR tower $K_{\d }$, 
the limit of the projective system 
$(U_{K}^cK_N^t(K_n),N_{K_{n+1}/K_n})_{n\geqslant 0}$ 
is trivial.
\end{Cor}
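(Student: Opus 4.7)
The plan is to start with an element $(\alpha_n)\in\underset{n}\varprojlim (U_K^cK_N^t(K_n),N_{K_{n+1}/K_n})$ and show that every $\alpha_n=0$ by iterating Proposition \ref{P4.1}. Each norm step adds roughly $c/p$ to the filtration exponent while subtracting the defect $e(K_{n+1}/K_0)^{-1}$. Since the defects will turn out to be summable, pushing $\alpha_m$ down from arbitrarily high level $m$ to a fixed level $n$ drives the filtration exponent to infinity, and the conclusion reduces to the triviality of the intersection $\bigcap_{c'\geqslant c} U_K^{c'}K_N^t(K_n)$.

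For the quantitative estimate, the SDR congruence $\pi_1^{(n+1)p}\equiv\pi_1^{(n)}\,\mathrm{mod}\,\p _K^c$ at the first local parameter forces $v_K(\pi_1^{(n+1)})=v_K(\pi_1^{(n)})/p$ as soon as $v_K(\pi_1^{(n)})<c$. Hence the first-valuation ramification index multiplies by $p$ at each step for $n$ large enough, so $e(K_n/K_0)^{-1}$ decays geometrically and $\sum_{j\geqslant n_0}e(K_j/K_0)^{-1}<\infty$. In particular we may fix $n_1\geqslant n_0$ such that $c/p-e(K_{j+1}/K_0)^{-1}>0$ for all $j\geqslant n_1$, which lets Proposition \ref{P4.1} be iterated with all intermediate exponents staying $\geqslant c$.

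Now fix $n\geqslant n_1$ and $m>n$. By functoriality of norms $\alpha_n=N_{K_m/K_n}(\alpha_m)$, and applying Proposition \ref{P4.1} $m-n$ times yields
\[
\alpha_n\in U_K^{c_m^*}K_N^t(K_n),\qquad c_m^*=c+(m-n)\frac{c}{p}-\sum_{j=n+1}^{m}e(K_j/K_0)^{-1}.
\]
As $m\to\infty$ the subtracted sum stays bounded by a constant while $(m-n)c/p$ grows linearly, so $c_m^*\to\infty$ and $\alpha_n\in\bigcap_{c'\geqslant c} U_K^{c'}K_N^t(K_n)$. For $n<n_1$ one then concludes from $\alpha_n=N_{K_{n_1}/K_n}(\alpha_{n_1})=0$.

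The main obstacle is showing that this intersection is trivial. The natural route is to note that the subgroups $U_K^{c'}K_N^t(K_n)$ form a filtration by $P$-open subgroups of $K_N^t(K_n)$, and that $K_N^t(K_n)$ is Hausdorff by its very construction as the quotient $K_N(K_n)/\Lambda$ with $\Lambda$ the intersection of all neighborhoods of zero. The point requiring care is verifying that each $U_K^{c'}$ is indeed $P$-open -- most cleanly by exhibiting it as a neighborhood via the generating map $V_{K_n}\times(K_n^*)^{N-1}\to VK_N^t(K_n)$, using that $1+\p_K^{c'}$ is $P$-open in $V_{K_n}$.
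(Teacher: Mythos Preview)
Your iteration of Proposition~\ref{P4.1} is exactly the argument the paper has in mind (the corollary is stated without proof), and the estimate $c_m^*\to\infty$ is correct: the SDR congruences force $e(K_n/K_0)$ to grow like $p^n$ for large $n$, so the defects $e(K_j/K_0)^{-1}$ are summable and the intermediate exponents stay $\geqslant c$ once $n\geqslant n_1$.

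The gap is in your last paragraph. For $N\geqslant 2$ the subgroup $1+(\p_K^{c'}\cap K_n)$ is \emph{not} $P$-open in $V_{K_n}$. A basic $P$-neighborhood of $0$ in $K_n$ has the form $\sum_{b\in\Z} u_1^b\, s(U_b)$ where every $U_b$ lies in the basis $\c C_{K_n^{(1)}}$ of neighborhoods of $0$ in the $(N{-}1)$-dimensional residue field; since $\{0\}$ is not open in $K_n^{(1)}$, each $U_b$ contains a nonzero element, and picking one for some $b<m$ produces an element of the neighborhood with first valuation below any prescribed bound. Thus no basic $P$-neighborhood sits inside $\p_K^{c'}\cap K_n$. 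Sets cut out by a first-valuation inequality are open for the first-valuation topology but not for the strictly finer $P$-topology, and your route from $P$-openness to the Hausdorff conclusion breaks here.

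The repair is algebraic rather than topological. For $c'>v_K(p)/(p-1)$ the $p$-adic logarithm identifies $1+(\p_K^{c'}\cap K_n)$ with the additive group $\p_K^{c'}\cap K_n$, under which the $p^M$-th power map becomes multiplication by $p^M$. Hence for every $M$ there is $c''$ with $1+(\p_K^{c''}\cap K_n)\subset (K_n^*)^{p^M}$, which gives $U_K^{c''}K_N^t(K_n)\subset p^M\,VK_N^t(K_n)$. Therefore
\[
\bigcap_{c'\geqslant c}U_K^{c'}K_N^t(K_n)\ \subset\ \bigcap_{M\geqslant 1}p^M\,VK_N^t(K_n)=0,
\]
the last equality being Fesenko's description $\Lambda=\bigcap_{m}p^m VK_N(K_n)$ of the subgroup killed in passing to $K_N^t$ (Subsection~\ref{S1.3}). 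With this replacement your argument is complete.
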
 

\begin{Prop} \label{P4.4} Suppose $K_{\d }$ is a special SDR tower with parameters 
$(n_0,c)$. Then there is  $0<c^0<c$ such that 
for all $n\gg 0$,  the morphisms $j_n$ 
in the projective system 
\eqref{E4.2} with $c$ replaced by $c^0$,
are induced by the norm maps 
$N_{K_{n+1}/K_n}:K_N^t(K_{n+1})\longrightarrow K_N^t(K_n)$.
\end{Prop}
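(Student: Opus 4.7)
The plan is to verify on a system of topological generators of $K_N^t(K_{n+1})/U_K^cK_N^t(K_{n+1})$ that $N_{K_{n+1}/K_n}$ agrees modulo $U_K^{c^0}K_N^t(K_n)$ with $j_n$ for a suitable $c^0$ slightly smaller than $c/p$ (and independent of $n$). First I would fix, using the special SDR assumption, an intermediate tower of degree-$p$ extensions $K_n=L_0\subset\dots\subset L_N=K_{n+1}$ together with the matched systems of local parameters $\eta _1,\dots ,\eta _N$ of $L_0$ and $\eta '_1,\dots ,\eta '_N$ of $L_N$ produced by Lemma \ref{L4.2}. Identifying these (modulo $\p _K^c$) with the parameters $\pi _i^{(n)}$ and $\pi _i^{(n+1)}$ entering the definition of $\hat\imath _n$, the map $j_n$ sends $\{\eta '_1,\dots ,\eta '_N\}$ to $\{\eta _1,\dots ,\eta _N\}$ and $1+[\theta ]\underline{\eta '}^a$ to $1+[\theta ^p]\underline{\eta }^a$, modulo $U_K^cK_N^t(K_n)$.

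The central computation concerns the top generator $\alpha =\{\eta '_1,\dots ,\eta '_N\}$: I would compute $N_{K_{n+1}/K_n}(\alpha )$ step by step up the tower, using at the $i$-th stage that all entries but one already lie in $L_{i-1}$. The projection formula (property (1) of Subsection \ref{S1.4}) lets the $L_{i-1}$-entries pass through the norm, while Lemma \ref{L4.2}(a) replaces the remaining entry $\eta '_{j_i}$ by $N_{L_i/L_{i-1}}(\eta '_{j_i})\equiv\eta _{j_i}\,\mathrm{mod}\,U_K^{c/p}(L_{i-1})$. Iterating through the $N$ levels then yields $\{\eta _1,\dots ,\eta _N\}$ modulo $U_K^{c^0}K_N^t(K_n)$, which coincides with $j_n(\alpha )$. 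For the unit-type generators $\{\varepsilon ,\eta '_{i_1},\dots ,\eta '_{i_{N-1}}\}$ with $\varepsilon\in 1+\p _K^{c_1}$ (some $c_1>0$), Proposition \ref{P4.1} already shows that the norm lands in $U_K^{c^0}K_N^t(K_n)$ as soon as $c_1+c/p-e(K_{n+1}/K)^{-1}\geqslant c^0$; the same bound applies to $j_n$ of these symbols by inspection, so both images vanish in the quotient.

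The main obstacle is the uniformity of $c^0$: one must verify that the cumulative error produced at each of the $N$ steps of the tower calculation is bounded away from $c$ independently of $n$, and that the term $e(K_{n+1}/K)^{-1}$ in Proposition \ref{P4.1} does not force $c^0$ to zero. The first is a direct bookkeeping of the $c/p$ losses from Lemma \ref{L4.2}; the second is harmless because $c^0$ can be fixed at any value strictly less than $c/p$ and $n_0$ can then be enlarged so that $e(K_{n+1}/K)^{-1}$ falls below $c/p-c^0$. Once the comparison has been checked on generators, the $P$-continuity of $N_{K_{n+1}/K_n}$ (Subsection \ref{S1.4}) and the fact that $j_n$ is defined by construction on the same generators propagate the identification to the whole quotient $K_N^t(K_{n+1})/U_K^cK_N^t(K_{n+1})$, completing the proof.
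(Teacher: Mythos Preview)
Your treatment of the ``top'' symbol $\{\eta'_1,\dots,\eta'_N\}$ is essentially the paper's argument: iterate Lemma~\ref{L4.2}(a) through the degree-$p$ tower and collect an error in $U_K^{c^0}$ with $c^0=c/p-e(K_{n_0'+1}/K)^{-1}$ for a suitably enlarged $n_0'$. That part is fine.

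The gap is in your handling of the unit-type generators. You assert that for $\varepsilon\in 1+\p_K^{c_1}$ the norm of $\{\varepsilon,\eta'_{i_1},\dots,\eta'_{i_{N-1}}\}$ lands in $U_K^{c^0}K_N^t(K_n)$ and that ``the same bound applies to $j_n$ by inspection, so both images vanish in the quotient.'' This is false for small $c_1$. First, Proposition~\ref{P4.1} is stated only for $c_1\geqslant c$; for smaller $c_1$ the proof of Lemma~\ref{L4.2}(b) gives $N(\varepsilon)\equiv\varepsilon^p$ modulo a deeper ideal, so the norm image is governed by $\varepsilon^p$ and lies only in $U_K^{pc_1}$, not in $U_K^{c^0}$. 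Second, and decisively, $j_n$ sends a level-$c_1$ unit symbol to a level-$pc_1$ symbol (since $\hat\imath_n(1+[\theta]\underline{\pi}^{(n+1)a})=1+[\theta^p]\underline{\pi}^{(n)a}$ and $v_K(\underline{\pi}^{(n)a})=p\,v_K(\underline{\pi}^{(n+1)a})$). For generators with $0<c_1<c^0/p$ --- and these are precisely the ones that survive in $K_N^t(K_{n+1})/U_K^{c^0}$ --- neither the norm nor $j_n$ lands in $U_K^{c^0}$, so ``both vanish'' is simply wrong and the comparison is vacuous there.

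What the paper does (and what you must do) is compute the norm on these generators, not bound it: running the same step-by-step argument as for the top symbol but tracking the unit entry via $N_{L_{s(a)}/L_{s(a)-1}}(\varepsilon_{a\theta})\equiv\varepsilon_{a\theta}^{\,p}$ modulo $U_K^{c^0}$, one obtains
\[
N_{K_{n+1}/K_n}(\alpha_{a\theta})\equiv\{\varepsilon_{a\theta}^{\,p},\eta_{j_1}^{\prime p},\dots,\widehat{\eta_{j_{s(a)}}^{\prime p}},\dots,\eta_{j_N}^{\prime p}\}\ \mathrm{mod}\ U_K^{c^0}K_N^t(K_n),
\]
which is exactly $j_n(\alpha_{a\theta})$ once one identifies $\eta_i^{\prime p}$ with $\eta_i$ and $\varepsilon_{a\theta}^{\,p}$ with $1+[\theta^p]\underline{\eta}^a$ modulo $\p_K^c$. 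So the missing idea is that the unit generators require the same explicit norm computation as the top symbol, not an appeal to Proposition~\ref{P4.1}.
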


\begin{proof} Let $n_0'\geqslant n_0$ be such that 
 $e(K_{n_0'+1}/K)<c/p$ and $c^0:=c/p-e(K_{n_0'+1}/K)$. 
Take $n\geqslant n_0'$ and use the notation from Proposition \ref{P4.1}. 
The group $K_N^t(K_{n+1})$ is topologically generated 
by the symbol $\{\eta _{j_1}',\dots ,\eta _{j_N}'\}$ and all 
$\alpha _{a\theta }=
\{\varepsilon '_{a\theta },\eta _{j_1},\dots ,\eta '_{j_N}\}$, where 
$a\in\Z ^N\setminus p\Z ^N$, $a>\bar 0$. Applying arguments from the proof of 
Proposition \ref{P4.1}, we obtain that 
$$N_{K_{n+1}/K_n}(\{\eta _{j_1}',\dots ,\eta _{j_N}'\})\equiv 
\{\eta ^{\prime p}_{j_1},\dots ,\eta ^{\prime p} _{j_N}\}\,\mathrm{mod}\,
U_K^{c^0}(K_N^t(K_n))$$ 
$$N_{K_{n+1}/K_n}(\alpha _{a\theta })\equiv 
\{\varepsilon ^p_{a\theta },
\eta ^{\prime p}_{j_1},\dots ,
\eta ^{\prime p}_{j_{s(a)-1}},
\eta ^{\prime p}_{j_{s(a)+1}},\dots ,
\eta ^{\prime p}_{j_N}\}
U_K^{c^0}(K_N^t(K_n)).$$
\end{proof}

\begin{Prop} \label{P4.5} Suppose $K_{\d }$ is a special SDR tower. 
Then for all $n\geqslant 0$, there are 
homomorphisms 
$\c N_{\c K/K_n}:K_N^t(\c K)\longrightarrow K_N^t(K_n)$ 
such that 

a) the system of morphisms $\{\c N_{\c K/K_n}\}_{n\geqslant 0}$ maps  $K_N^t(\c K)$ to the projective system 
$(K_N^t(K_n),N_{K_{n+1}/K_n})_{n\geqslant 0}$ and defines  
a group isomorphism 
$K_N^t(\c K)\overset{~}\simeq
\underset{n}\varprojlim K_N^t(K_n)$;

b) for sufficiently small $c>0$, the projective limit of the 
compositions of $\c N_{\c K/K_n}$ with projections 
$K_N^t(K_n)\rightarrow K_N^t(K_n)/U_K^c(K_N^t(K_n))$ 
coincides with the isomorphism $\iota ^{(c)}$ 
from Subsection \ref{S4.2}.
\end{Prop}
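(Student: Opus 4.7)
The plan is to construct $\c N_{\c K/K_n}$ as a $P$-adic limit of iterated norms from deep levels of the tower, and to deduce parts (a) and (b) simultaneously from the isomorphisms $\iota^{(c)}$ of Subsection~\ref{S4.2} combined with Propositions~\ref{P4.1} and~\ref{P4.4} and Corollary~\ref{C4.3}.

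First, I would fix $c^0>0$ small enough that Proposition~\ref{P4.4} applies, so that the transition morphisms $j_m$ in the projective system $(K_N^t(K_m)/U^{c^0}_K K_N^t(K_m))_{m\geqslant 0}$ are induced by the norm maps $N_{K_{m+1}/K_m}$. Given $\alpha\in K_N^t(\c K)$, the isomorphism $\iota^{(c^0)}$ identifies $\alpha$ with a norm-compatible family $(\bar\alpha_m)_{m\geqslant 0}$ of classes modulo $U^{c^0}_K$. For each $m\geqslant n$, I would pick an arbitrary lift $\tilde\alpha_m\in K_N^t(K_m)$ of $\bar\alpha_m$ and set
$$\c N_{\c K/K_n}(\alpha):=\lim_{m\to\infty}N_{K_m/K_n}(\tilde\alpha_m),$$
the limit being taken in the $P$-topology of $K_N^t(K_n)$.

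The main technical point, and the one I expect to be the principal obstacle, is to verify that this limit exists and is independent of the choice of lifts. Both reduce to the following estimate: for any $\xi_m\in U^{c^0}_K K_N^t(K_m)$, one has $N_{K_m/K_n}(\xi_m)\in U^{c(m)}_K K_N^t(K_n)$ with $c(m)\to\infty$ as $m\to\infty$. Iterating Proposition~\ref{P4.1}, each step $K_{i+1}\to K_i$ gains at least $c/p-e(K_{i+1}/K_0)^{-1}$ on the filtration index; since $e(K_{i+1}/K_0)$ grows at least geometrically in $p$, the subtracted terms are summable while the gains accumulate to $(m-n)\cdot c/p$, so $c(m)\to\infty$. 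This proves simultaneously that the sequence is Cauchy (successive differences lie in the shrinking filtration) and that the limit is independent of the lifts (two lifts differ by an element of $U^{c^0}_K K_N^t(K_m)$).

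Granted the construction, the remaining claims become formal. Additivity of lifts makes $\c N_{\c K/K_n}$ a group homomorphism; compatibility $N_{K_{n+1}/K_n}\circ\c N_{\c K/K_{n+1}}=\c N_{\c K/K_n}$ follows by reindexing $m$, so the family assembles into a map $\c N:K_N^t(\c K)\to\varprojlim_n K_N^t(K_n)$. Part (b) is then immediate since, by construction, the reduction of $\c N_{\c K/K_n}$ modulo $U^c_K$ is precisely the $n$-th component of $\iota^{(c)}$ for every $c\leqslant c^0$. Injectivity in (a) follows from this because composing $\c N$ with projection modulo $U^{c^0}_K$ recovers the isomorphism $\iota^{(c^0)}$. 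For surjectivity, given $(\beta_n)\in\varprojlim_n K_N^t(K_n)$, I would apply $\iota^{(c^0)}$ to obtain $\alpha\in K_N^t(\c K)$ whose image matches $(\beta_n)$ modulo $U^{c^0}_K$; then $\c N(\alpha)-(\beta_n)\in\varprojlim_n U^{c^0}_K K_N^t(K_n)$, which is trivial by Corollary~\ref{C4.3}.
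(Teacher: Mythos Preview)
Your proposal is correct and follows the same strategy as the paper: define $\c N_{\c K/K_n}(\alpha)$ as the limit of $N_{K_m/K_n}(\tilde\alpha_m)$ where $\tilde\alpha_m$ are lifts of the components of $\iota^{(c^0)}(\alpha)$, use the filtration estimate from Proposition~\ref{P4.1} for convergence and well-definedness, and invoke Corollary~\ref{C4.3} for surjectivity. In fact you supply more detail than the paper, which only states the defining formula and remarks that Corollary~\ref{C4.3} handles the verification. One small point: Proposition~\ref{P4.1} is stated with the hypothesis $c_1\geqslant c$, whereas your starting filtration level is $c^0<c$; this is harmless since the tower is also SDR with the smaller parameter $c^0$, so the same estimate applies with $c^0/p$ in place of $c/p$ and the gains still diverge.
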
 

\begin{proof} 
Suppose $(n_0',c^0)$ are the parameters for $K_{\d}$ from 
Proposition \ref{P4.4}. 
For any $a\in K_N^t(\c K)$ and $m\geqslant n_0'$, choose 
$a_m\in K_N^t(K_m)$ such that 
$\iota _m(a\,\mathrm{mod}\,U_{\c K}^{c^0}(K_N^t(\c K))=
a_m\,\mathrm{mod}\,U_K^{c^0}(K_N^t(K_m))$, where $\iota _m$ are 
isomorphisms rom Subsection \ref{S4.2}. 

This allows us to set  
$\c N_{\c K/K_n}(a)=\lim _{m\to\infty }N_{K_m/K_n}(a_m)$ 
for any $n\geqslant n_0'$. 

Then Corollary \ref{C4.3} can be used to prove that the maps 
$\c N_{\c K/K_n}$ are well-defined and satisfy the requirements of our Proposition.
 \end{proof}

Finally, notice the following properties:

\begin{Cor}\label{4.6} 
 Suppose $K_{\d }$ is a special SDR tower, 
$\c K=X(K_{\d })$ and $\bar u_1,\dots ,\bar u_N$ is a system of 
local parameters of $\c K$. Then 

a) $\underset{n\geqslant 0}\bigcap N_{K_n/K_0}K_N^t(K_n)=
\c N_{\c K/K_0}K_N^t(\c K)$;

b) for all $n\gg 0$, 
$\c N_{\c K/K_n}(\{\bar u_1,\dots ,\bar u_N\})=
\{u _1^{(n)},\dots ,u _N^{(n)}\}$,
where $u _1^{(n)},\dots ,u _N^{(n)}$ is a system of 
local parameters of $K_n$.
\end{Cor}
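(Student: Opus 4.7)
The plan is to deduce both parts from the structural isomorphism provided by Proposition \ref{P4.5}(a), combined with the norm-filtration control of Proposition \ref{P4.1} and the compatibility of the filtered isomorphisms $\iota ^{(c)}$ of Subsection \ref{S4.2}. For part a), the inclusion $\c N_{\c K/K_0}(K_N^t(\c K))\subseteq \underset{n\geqslant 0}\bigcap N_{K_n/K_0}K_N^t(K_n)$ is immediate: under the isomorphism $K_N^t(\c K)\simeq\underset{n}\varprojlim\,(K_N^t(K_n),N_{K_{n+1}/K_n})$, the map $\c N_{\c K/K_0}$ becomes projection to the $n=0$ component, so any $x\in K_N^t(\c K)$ with image $(b_n)$ satisfies $\c N_{\c K/K_0}(x)=b_0=N_{K_n/K_0}(b_n)$ for every $n$. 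For the reverse inclusion, given $a$ in the intersection I would construct a coherent preimage in $\underset{n}\varprojlim K_N^t(K_n)$ by working one filtration piece at a time. The agreement between $j_n$ and $N_{K_{n+1}/K_n}$ modulo $U_K^{c^0}$ from Proposition \ref{P4.4}, combined with $\iota ^{(c^0)}$, turns $a\,\mathrm{mod}\,U_K^{c^0}$ into a unique element $x^{(c^0)}\in K_N^t(\c K)/U_{\c K}^{c^0_0}$. The compatibility of the $\iota ^{(c)}$ for varying $c$ lets me refine to $x^{(c')}\in K_N^t(\c K)/U_{\c K}^{c'_0}$ for arbitrarily small $c'>0$, and these fit together via $P$-topological completeness into an element $x\in K_N^t(\c K)$ satisfying $\c N_{\c K/K_0}(x)=a$.

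For part b), the construction in Proposition \ref{P4.5} specialises to
\begin{equation*}
\c N_{\c K/K_n}(\{\bar u_1,\dots ,\bar u_N\})=\underset{m\to\infty }\lim N_{K_m/K_n}(\{u_1^{(m)},\dots ,u_N^{(m)}\}),
\end{equation*}
since $\{u_1^{(m)},\dots ,u_N^{(m)}\}$ is the natural lift of $\{\bar u_1,\dots ,\bar u_N\}$ modulo $U_K^{c^0}$ under $\iota _m$. The computation in the proof of Proposition \ref{P4.4} gives, for $m\geqslant n_0'$, that $N_{K_{m+1}/K_m}(\{u_1^{(m+1)},\dots ,u_N^{(m+1)}\})=\{u_1^{(m)},\dots ,u_N^{(m)}\}+r_m$ with $r_m\in U_K^{c^0}(K_N^t(K_m))$. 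Telescoping yields
\begin{equation*}
N_{K_m/K_n}(\{u_1^{(m)},\dots ,u_N^{(m)}\})=\{u_1^{(n)},\dots ,u_N^{(n)}\}+\underset{j=n}{\overset{m-1}\sum}N_{K_j/K_n}(r_j),
\end{equation*}
and iterated application of Proposition \ref{P4.1} places each $N_{K_j/K_n}(r_j)$ in $U_K^{C_{j,n}}(K_N^t(K_n))$ with $C_{j,n}\to\infty $ as $j\to\infty $. For $n\gg 0$ the error series vanishes in the $P$-topological limit, giving the required equality.

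The main obstacle is the reverse inclusion in part a), where one must lift from a family that is merely compatible modulo each filtration piece $U_K^c$ to a genuine coherent system in the inverse limit. This relies essentially on the quantitative improvement in Proposition \ref{P4.1}, which guarantees that the filtration $U_K^{c}$ is pushed strictly deeper under each norm step (thanks to the intermediate degree-$p$ extensions provided by the special SDR hypothesis), together with Corollary \ref{C4.3} ensuring that the ensuing correction terms shrink to zero in the inverse limit. Without the special SDR assumption the filtration would fail to deepen uniformly, and the above refinement procedure would not terminate.
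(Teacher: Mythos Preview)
Your argument for the forward inclusion in a) is fine, and the telescoping set-up in b) is a reasonable way to unpack the limit defining $\c N_{\c K/K_n}$. There are, however, two genuine gaps.

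\textbf{Part a), reverse inclusion.} The step ``$\iota ^{(c^0)}$ turns $a\,\mathrm{mod}\,U_K^{c^0}$ into a unique element $x^{(c^0)}\in K_N^t(\c K)/U_{\c K}^{c_0^0}$'' does not do what you need. The isomorphism $\iota ^{(c^0)}$ identifies $K_N^t(\c K)$ with the \emph{inverse limit} $\varprojlim_n K_N^t(K_n)/U_K^{c^0}$; feeding in the single value $a\,\mathrm{mod}\,U_K^{c^0}$ at level $0$ does not produce a coherent system. If instead you mean the single-level isomorphism $\iota _0$, then yes you get an element of $K_N^t(\c K)/U_{\c K}^{c_0}$, but your refinement ``for arbitrarily small $c'>0$'' goes the wrong way: as $c'\to 0$ the subgroup $U_{\c K}^{c'_0}$ gets \emph{larger}, so the quotients $K_N^t(\c K)/U_{\c K}^{c'_0}$ become \emph{coarser}, not finer. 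What you actually need is to go to larger and larger $n$ (so that $c_n\to\infty$), but then you must use the hypothesis at level $n$, i.e.\ a lift $c_n\in K_N^t(K_n)$ with $N_{K_n/K_0}(c_n)=a$, and there is no reason the resulting classes in $K_N^t(\c K)/U_{\c K}^{c_n}$ are compatible: from $N_{K_n/K_0}(N_{K_{n+1}/K_n}(c_{n+1}))=N_{K_n/K_0}(c_n)$ you cannot conclude $N_{K_{n+1}/K_n}(c_{n+1})\equiv c_n$ modulo $U_K^{c}$. A correct argument must supply either a Mittag--Leffler condition for the norm system or a compactness input; Propositions \ref{P4.1} and \ref{P4.4} by themselves do not bridge this gap.

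\textbf{Part b).} Your telescoping identity is correct, and the filtration bound $N_{K_j/K_n}(r_j)\in U_K^{C_{j,n}}$ with $C_{j,n}\to\infty$ is exactly what Proposition \ref{P4.1} gives. But ``the error series vanishes in the $P$-topological limit'' is not what this says: the individual terms tend to $0$, so the series \emph{converges}, but its sum is some element of $U_K^{c^0}(K_N^t(K_n))$, not $0$. Thus you have only shown
\[
\c N_{\c K/K_n}(\{\bar u_1,\dots ,\bar u_N\})=\{u_1^{(n)},\dots ,u_N^{(n)}\}+v,\qquad v\in U_K^{c^0}(K_N^t(K_n)).
\]
To finish you must explain why the right-hand side is again of the form $\{v_1^{(n)},\dots ,v_N^{(n)}\}$ for some system of local parameters. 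The cleanest way is to invoke the splitting $K_N^t=\Z\oplus VK_N^t$ preserved by norm maps (Subsection \ref{S1.4}): since $\{\bar u_1,\dots ,\bar u_N\}$ lies in the $\Z$-summand and $\c N_{\c K/K_n}$ is a limit of norms, its image lies in the $\Z$-summand of $K_N^t(K_n)$; your congruence then pins down the coefficient as $1$, and one checks that the generator can be realised by a symbol in local parameters.
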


\subsection{The case of an arbitrary SDR tower $F_{\d}$}\label{S4.4}

Suppose $L/K$ is a finite Galois extension of $N$-dimensional 
local fields in a fixed algebraic closure 
$\bar K$ of $K$ and $G=\Gal (\bar K/K)$. Then there is a unique maximal purely 
 unramified extension $K_0$ of $K$ in $L$. 
This means that $[K_0:K]=[k_0:k]$, where $k_0$ and $k$ are the $N$-th 
residue fields of $L$ and $K$, respectively. Let $G_0=\Gal (L/K_0)$ and let 
$G_1=\{\tau\in G_0\ |\ \tau (a)/a\in 1+\m _L, \forall a\in L\}$, where 
$\m _L$ is the maximal ideal of the $N$-dimensional valuation ring 
$\c O_L$ of $L$. Then $G_1$ is a $p$-group, it is normal in $G$ and the order 
of $G_0/G_1$ is prime to $p$. The field extension $K_1:=L^{G_1}$ of $K$ is 
the maximal tamely ramified extension 
of $K$ in $L$. Note that in our setting,  
any tamely ramified field extension is always 
assumed to be Galois. Keeping the above notation we obtain the following property. 

\begin{Prop} \label{P4.7} 
Let $\widetilde{K}_1$ be a tamely ramified extension of $K$ 
 in $\bar K$ with the $N$-th residue field $k_0$ and $d=[\widetilde{K}_1:K_0]$. 
If $[\alpha ]$ is the Teichm\" uller 
representative of a generator $\alpha $ 
of $k^*_0$ and $u_1,\dots ,u_N$ is a system of local parameters in $K$ then 
$\widetilde{K}_1\subset K(\root d\of {[\alpha ]},
\root d\of{u_1},\dots ,\root d\of {u_N})$.
\end{Prop}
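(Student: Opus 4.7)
The plan is to apply Kummer theory over $K_0$ to the Galois totally tamely ramified extension $\widetilde{K}_1/K_0$ of degree $d$ prime to $p$, showing that it sits inside $K_0(\sqrt[d]{u_1},\dots ,\sqrt[d]{u_N},\sqrt[d]{[\alpha]})$, which in turn lies in $M:=K(\sqrt[d]{[\alpha]},\sqrt[d]{u_1},\dots ,\sqrt[d]{u_N})$.

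First I check $K_0\subset M$: since $[\alpha]$ is a unit, $K(\sqrt[d]{[\alpha]})/K$ is unramified and its $N$-th residue field contains $\alpha^{1/d}$, hence $\alpha$, hence $k_0=k(\alpha)$; as $K_0$ is the unique unramified extension of $K$ in $\bar K$ with $N$-th residue field $k_0$, this gives $K_0\subset K(\sqrt[d]{[\alpha]})\subset M$. Because $K_0/K$ is unramified, $u_1,\dots ,u_N$ remain a system of local parameters of $K_0$. To prepare Kummer theory one also needs $\mu_d\subset K_0$: pick a first local parameter $\pi'$ of $\widetilde{K}_1$, so that $\pi^{\prime d}=vu_1$ with $v\in\c O_{K_0}^{*}$; the extension $\widetilde{K}_1/K_0$ being Galois forces all roots of $X^d-vu_1$ into $\widetilde{K}_1$, giving $\mu_d\subset\widetilde{K}_1$, and since $K_0(\mu_d)/K_0$ is unramified (as $p\nmid d$) while $\widetilde{K}_1/K_0$ is totally ramified, the chain $K_0\subset K_0(\mu_d)\subset\widetilde{K}_1$ forces $K_0(\mu_d)=K_0$.

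Now Kummer theory applies. Using $K_0^{*}\cong\Z^N\times[k_0^{*}]\times(1+\m_{K_0})$ with respect to $u_1,\dots ,u_N$, and the $d$-divisibility of the $\Z_p$-module $(1+\m_{K_0})$ (since $d\in\Z_p^{*}$), one obtains
$$K_0^{*}/K_0^{*d}\;\cong\;(\Z/d\Z)^{N}\oplus k_0^{*}/k_0^{*d},$$
with generators the classes of $u_1,\dots ,u_N,[\alpha]$. By Kummer theory the maximal abelian Galois exponent-$d$ extension of $K_0$ is therefore $K_0(\sqrt[d]{u_1},\dots ,\sqrt[d]{u_N},\sqrt[d]{[\alpha]})$; moreover $\mathrm{Gal}(\widetilde{K}_1/K_0)$ embeds into $\mu_d$ via $\sigma\mapsto\sigma(\pi')/\pi'$ (a homomorphism because $\mu_d\subset K_0$), so it is cyclic of exponent dividing $d$, and $\widetilde{K}_1$ lies inside the Kummer extension above, hence in $M$. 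The main obstacle is the multiplicative decomposition of $K_0^{*}/K_0^{*d}$, which requires the $P$-topological structure of $1+\m_{K_0}$ from Subsection \ref{S1.2}; once this is in hand, the remaining tame-ramification and Kummer steps are essentially formal.
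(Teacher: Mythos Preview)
The paper states Proposition~\ref{P4.7} without proof, so there is nothing to compare against; what follows is an assessment of your argument on its own.

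Your overall strategy---reduce to Kummer theory over $K_0$ and identify $K_0^*/K_0^{*d}$---is the natural one, but one step imports one-dimensional intuition that breaks for $N>1$. The claim that a first local parameter $\pi'$ of $\widetilde{K}_1$ satisfies $\pi'^{\,d}=vu_1$ with $v\in\c O_{K_0}^*$ presumes that the entire degree $d=[\widetilde K_1:K_0]$ is concentrated in the first valuation. In an $N$-dimensional local field the tame degree can be spread over all levels: for instance $\widetilde K_1=K_0(u_2^{1/l})$ has $d=l$ while $u_1$ remains a first uniformiser. Consequently your deduction that $\mu_d\subset K_0$ collapses, as does the assertion that $\Gal(\widetilde K_1/K_0)$ is cyclic via $\sigma\mapsto\sigma(\pi')/\pi'$. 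In fact $\mu_d\subset K_0$ can fail outright: if $G_0\cong(\Z/l)^2$ with $l\,\|\,(|k_0|-1)$ then $d=l^2\nmid |k_0|-1$.

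The repair is short and stays within your framework. For a system of local parameters $\tau_1,\dots,\tau_N$ of $\widetilde K_1$, the map $\sigma\mapsto\bigl(\overline{\sigma(\tau_i)/\tau_i}\bigr)_{1\le i\le N}$ is an injective homomorphism $G_0\hookrightarrow (k_0^*)^N$ (this is precisely the description of $G_0/G_1$ set up in the paragraph before the Proposition, with $G_1=1$ since $\widetilde K_1$ is tame). Hence $G_0$ is abelian of exponent $e$ dividing $|k_0|-1$, so $\mu_e\subset K_0$ and Kummer theory of exponent $e$ applies. Your computation of $K_0^*/K_0^{*e}$ then goes through verbatim (only $p\nmid e$ is used for the $e$-divisibility of $1+\m_{K_0}$), giving $\widetilde K_1\subset K_0(\sqrt[e]{u_1},\dots,\sqrt[e]{u_N},\sqrt[e]{[\alpha]})$; since $e\mid d$ and $K_0\subset K(\sqrt[d]{[\alpha]})$ (as you correctly argued), this lies inside $K(\sqrt[d]{[\alpha]},\sqrt[d]{u_1},\dots,\sqrt[d]{u_N})$.
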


\begin{Cor} \label{C4.8} 
Suppose $\tilde d$ is a natural number and $\tilde d=p^md$, 
where $d$ is prime to $p$. 
 Let $\tilde k/k$ be a field extension of degree $\tilde d$. 
Choose a generator $\alpha $ of 
$\tilde k^*$, a system of local parameters $u_1,\dots ,u_N$
of $K$ and set 
$K(\tilde d):=K(\root d\of {[\alpha ]},\root d\of {u_1},
\dots ,\root d\of {u_N})$. Then $K(\tilde s)$  contains 
any tamely ramified extension of $K$ of degree dividing $\tilde d$.
\end{Cor}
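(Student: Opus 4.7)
The plan is to reduce the general case to Proposition \ref{P4.7} by absorbing the unramified (residue-field) part of any given tamely ramified extension into the chosen $K_0=K([\alpha])$, which is the unramified extension of $K$ with residue field $\tilde k$.

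First I would argue that every tamely ramified extension $L/K$ of degree dividing $\tilde d = p^m d$ factors through $K_0$ after enlarging. Writing $[L:K] = ef$ with $e$ the tame ramification index (so $\gcd(e,p)=1$) and $f$ the residue degree, the divisibility $ef\mid p^m d$ together with $\gcd(e,p)=1$ forces $e\mid d$. The residue field $k_L$ of $L$ has degree $f\mid \tilde d$ over $k$, hence sits inside $\tilde k$, so the maximal unramified subextension $L_0$ of $L/K$ is contained in $K_0$. Standard local theory then gives $L\cap K_0=L_0$, so $LK_0/K_0$ is totally tamely ramified of degree $[L:L_0]=e$.

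Next I would apply Proposition \ref{P4.7} directly to $\widetilde{K}_1 = LK_0$, which is tamely ramified over $K$ with $N$-th residue field $\tilde k$ and satisfies $[LK_0:K_0]=e$. This yields
$$LK_0\ \subset\ K\!\left(\sqrt[e]{[\alpha]},\sqrt[e]{u_1},\ldots,\sqrt[e]{u_N}\right).$$
Since $e\mid d$, each generator $\sqrt[e]{x}$ on the right is, up to an $e$-th root of unity, of the form $(\sqrt[d]{x})^{d/e}$; and the roots of unity $\mu_e\subset\mu_d$ are available inside $K(\tilde d)$ once we take the splitting-field interpretation of the adjunctions $\sqrt[d]{[\alpha]},\sqrt[d]{u_1},\ldots,\sqrt[d]{u_N}$ (consistent with the convention under which Proposition \ref{P4.7} is stated). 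Hence
$$L\ \subset\ LK_0\ \subset\ K\!\left(\sqrt[e]{[\alpha]},\sqrt[e]{u_1},\ldots,\sqrt[e]{u_N}\right)\ \subset\ K(\tilde d).$$

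The only mildly delicate point is the last inclusion, i.e.\ clarifying that $K(\tilde d)$ indeed contains $\mu_d$ and hence every $e$-th root (for $e\mid d$) of each of its distinguished generators; but this is simply the natural (splitting-field) reading of the notation, matching Proposition \ref{P4.7}, and requires no further work.
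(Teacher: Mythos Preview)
Your argument is correct and is exactly the natural deduction the paper has in mind; Corollary~\ref{C4.8} is stated without proof because it follows from Proposition~\ref{P4.7} precisely along the lines you give: pass to the compositum with the unramified extension $K_0=K([\alpha])$ having residue field $\tilde k$, note $e\mid d$, and apply Proposition~\ref{P4.7} to $LK_0$.

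One small sharpening of your ``mildly delicate point'': you do not actually need $\mu_d\subset K(\tilde d)$. Since the paper's convention makes the tamely ramified extension $L/K$ Galois, and $K_0/K$ is unramified hence Galois, the compositum $LK_0$ is Galois over $K$. Therefore the containment $LK_0\subset K(\sqrt[e]{[\alpha]},\sqrt[e]{u_1},\dots,\sqrt[e]{u_N})$ produced by Proposition~\ref{P4.7} holds for \emph{any} choice of $e$-th roots (conjugate choices give conjugate fields, and a Galois subfield sits inside every conjugate). In particular you may simply take $\sqrt[e]{x}:=(\sqrt[d]{x})^{d/e}$, which lands you directly in $K(\tilde d)$ with no appeal to roots of unity.
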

 
\begin{Prop} \label{P4.9} 
Suppose $K_{\d }$ is an SDR tower with parameters $(n_0,c)$. 
Then there is a tamely ramified extension 
 $K_{n_0}'$ of $K_{n_0}$ such that the tower 
$K_{\d }'=\{K_nK_{n_0}'\ |\ n\geqslant 0\}$ is a special 
SDR tower.
\end{Prop}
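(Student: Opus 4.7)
The plan is to choose $K'_{n_0}$ as a tame extension of $K_{n_0}$ that uniformly absorbs the tame parts of the Galois closures of the wild extensions $K_{n+1}/K_n$. Once the base change places each such extension inside a Galois $p$-extension, standard $p$-group theory produces the required chain of degree-$p$ subextensions.

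More precisely, for each $n \geq n_0$, let $\tilde K_{n+1}/K_n$ denote the Galois closure, with Galois group $G_n$. There is an exact sequence
\[
1 \longrightarrow P_n \longrightarrow G_n \longrightarrow G_n/P_n \longrightarrow 1,
\]
where $P_n$ is the wild inertia (the normal $p$-Sylow subgroup) and $G_n/P_n$ is tame of order prime to $p$. The central claim to establish is that the tame indices $|G_n/P_n|$ are uniformly bounded in $n$. This exploits the SDR structure in an essential way: the $N$-th residue fields of all $K_n$ (for $n \geq n_0$) coincide with $k$, and $K_{n+1}$ is generated over $K_n$ by approximate $p$-th roots $u_i^{(n+1)}$ of the local parameters $u_i^{(n)} \in K_n$. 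Consequently, the Galois closure is obtained essentially by adjoining roots of unity of bounded $p$-power order together with a controlled residue-theoretic datum, yielding a uniform bound on $|G_n/P_n|$ in terms of $p$ and the initial data $K_{n_0}$.

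Taking $\tilde d$ prime to $p$ large enough to accommodate this uniform bound and setting $K'_{n_0} = K_{n_0}(\tilde d)$ in the sense of Corollary \ref{C4.8}, the extension $(\tilde K_{n+1} K'_{n_0})/(K_n K'_{n_0})$ has $p$-group Galois group for every $n \geq n_0$. The intermediate extension $K_{n+1} K'_{n_0}/K_n K'_{n_0}$ corresponds to a subgroup of index $p^N$ inside this $p$-group, and any subgroup of a $p$-group is contained in a chain of subgroups with successive indices $p$. Taking fixed fields of such a chain produces the required tower
\[
K_n K'_{n_0} = M_0 \subset M_1 \subset \dots \subset M_N = K_{n+1} K'_{n_0}
\]
with $[M_i : M_{i-1}] = p$ for each $i$.

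Finally, one verifies that $K'_\d = \{K_n K'_{n_0}\}_{n \geq 0}$ remains an SDR tower: since $K'_{n_0}/K_{n_0}$ is tame, it is linearly disjoint from the wild tower, so $[K'_{n+1} : K'_n] = p^N$, and lifts of the local parameters of $K_n$ together with tame generators of $K'_{n_0}$ provide local parameters of $K'_n$ satisfying the $p$-th power congruences modulo $\p_K^{c'}$ for some $0 < c' \leq c$. The main obstacle is justifying the uniform tame bound used to define $K'_{n_0}$ once and for all: one must control the Galois closures $\tilde K_{n+1}/K_n$ jointly across all levels, which rests on the approximate Kummer structure of the wild extensions inherent to the SDR condition and on the stabilization of their residue-theoretic data.
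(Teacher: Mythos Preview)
Your overall strategy matches the paper's: pass to a Galois closure of each step $K_{n+1}/K_n$, base-change by a single tame extension to make the Galois closure a $p$-group, and then invoke elementary $p$-group theory to produce the chain of degree-$p$ subextensions. The $p$-group step and the conclusion are fine.

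The gap is precisely the point you flag as the ``main obstacle'': the uniform bound on the tame indices $|G_n/P_n|$. You try to extract this from the approximate Kummer structure of the SDR extensions, arguing heuristically that the Galois closure is obtained by adjoining controlled roots of unity. This is not justified in your write-up, and it is in any case unnecessary. The paper's argument is far more elementary and bypasses the obstacle entirely: since $[K_{n+1}:K_n]=p^N$ for all $n\geqslant n_0$, the Galois closure $\tilde K_{n+1}/K_n$ embeds in the symmetric group $S_{p^N}$ and hence has order dividing $\tilde d:=(p^N)!$. This bound is manifestly uniform in $n$. Writing $\tilde d=p^m d$ with $\gcd(d,p)=1$, the tame part of every $\tilde K_{n+1}/K_n$ has degree dividing $\tilde d$, so by Corollary~\ref{C4.8} it lies in $K_n(\tilde d)$. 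One then checks $K_n(\tilde d)=K_{n_0}(\tilde d)K_n$, so $K'_{n_0}:=K_{n_0}(\tilde d)$ works once and for all. No analysis of the SDR congruences $u_i^{(n+1)p}\equiv u_i^{(n)}$ is required for this step.

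In short: replace your appeal to ``approximate Kummer structure'' by the trivial observation that a degree-$p^N$ extension has Galois closure of order dividing $(p^N)!$, and the proof goes through.
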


\begin{proof} We need the following Lemma. 
 
\begin{Lem} \label{L4.10} 
Suppose $\widetilde{N}\in\N $ and 
$L/K$ is a totally ramified  separable extension 
of $K$ of degree $p^{\widetilde{N}}$, i.e. $L^{(N)}=k$. 
Let $\tilde d=(p^{\widetilde{N}})!=p^md$ where 
$m,d\in\N $ and $d$ is prime to $p$.  
Then there is a fields tower 
$L_0'\subset L_1'\subset\dots \subset L'_{\widetilde{N}}$ 
such that 

a) $L'_0$ is a tamely ramified extension of $K$ of degree dividing $\tilde d$;

b) for $1\leqslant i\leqslant N$, $[L'_{i}:L_{i-1}']=p$;

c) $L'_{\widetilde{N}}=L_0'L$.
\end{Lem}

\begin{proof}[Proof of Lemma] 
Since $L/K$ is separable 
there is $\theta\in L$ such that $L=K(\theta )$. Therefore, there is a Galois extension $\widetilde{K}$ of $K$ 
of degree dividing $\tilde d:=(p^{\widetilde{N}})!$ such that 
$L\subset \widetilde{K}$. Let $L_0'$ be the maximal tamely ramified 
extension of $K$ in $\widetilde{K}$. Then 
$L_0'\subset L'_{\widetilde{N}}:=L'_0L
\subset \widetilde{K}$, $[L'_{\widetilde{N}}:L'_0]=p^{\widetilde{N}}$ and 
$\widetilde{G}:=\Gal (\widetilde{K}/L'_0)$ is a finite $p$-group. Then 
elementary group theoretic arguments 
(e.g. any finite $p$-group has a central subgroup of order $p$) show the existence 
of a decreasing sequence of subgroups 
$$\widetilde{H}_0:=\widetilde{G}\supset \widetilde{H}_1\supset\dots \supset
\widetilde{H}_{\widetilde{N}-1}.\supset H_{\widetilde{N}}:=\Gal (\widetilde{K}/L'_{\widetilde{N}})$$
such that for $1\leqslant i\leqslant\widetilde{N}$, $(H_i:H_{i-1})=p$ and we can 
take $L_i'=\widetilde{K}^{H_i}$.
\end{proof}

For every $n\geqslant n_0$, let $K'_n:=L_0'$ where $L_0'$ is the field from 
the above Lemma chosen for $K=K_n$ and $L=K_{n+1}$. Since $[K'_n:K_n]$ 
divides $\tilde d=(p^{N})!$, 
the field $K'_n$ is contained in the tamely ramified extension $K_n(\tilde d)$ of $K_n$ from 
Corollary \ref{C4.8}. 
It remains to notice that for every $n\geqslant n_0$, $K_n(\tilde d)=K_{n_0}(\tilde d)K_n$. 
The Proposition is proved.
\end{proof}
 
For any $N$-dimensional local field $K$, set 
$K_N(K)_M:=K_N(K)/p^M$. If $L$ is a finite extension 
of $K$ and $c\geqslant 0$ denote by 
$U_K^cK_N(L)_M$ the image of $U_K^cK_N^t(L)$ in $K_N(L)_M$.

\begin{Prop} Suppose $F_{\d}$ is an SDR tower 
such that a primitive $p^M$-th root of unity 
$\zeta _M
\in F_{\infty }=
\underset{n\geqslant 0}\bigcup F_n$. Then 
for any $c>0$, $\underset{n}\varprojlim U_F^cK_N(F_n)_M=0$.
\end{Prop}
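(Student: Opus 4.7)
The strategy is to combine two ingredients: a filtration bound showing that $U_F^{c'} K_N(F_n)_M = 0$ once $c'$ exceeds a constant independent of $n$ (provided $\zeta_M \in F_n$), together with an iterated norm-depth estimate from Proposition~\ref{P4.1} showing that $U_F^c$ is driven into $U_F^{c'}$ for arbitrarily large $c'$ by successive norms along the tower.

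First, since $\zeta_M \in F_\infty$, pick $n_1$ with $\zeta_M \in F_{n_1}$; for $m < n_1$, the norm compatibility $a_m = N_{F_{n_1}/F_m}(a_{n_1})$ reduces the problem to showing $a_n = 0$ for $n \geqslant n_1$, so we may assume $\zeta_M \in F_n$ throughout. Under this assumption, the $P$-topological generators $\varepsilon_0^{(n)}, \varepsilon_{ja}^{(n)}, \varepsilon_{ie^*}^{(n)}$ of $K_N^t(F_n)/p^M$ from Subsection~\ref{S1.3} all have unit-part first-valuation at most $v_F(p)p/(p-1)$, since the exponents $a$ satisfy $a < e^* = ep/(p-1)$ in the lexicographic order and $e_1 = v_F(p)$. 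Consequently
$$U_F^{c'} K_N(F_n)_M = 0 \qquad \text{for every } c' > B := v_F(p)p/(p-1),$$
with $B$ independent of $n$.

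To conclude $a_n = 0$ it is enough to prove that $a_n \in U_F^{c'} K_N(F_n)_M$ for arbitrarily large $c'$. Since $a_n = N_{F_m/F_n}(a_m)$ with $a_m \in U_F^c K_N(F_m)_M$ for every $m \geqslant n$, this reduces to showing that the iterated norm $N_{F_m/F_n}$ sends $U_F^c$ into $U_F^{c(m)}$ with $c(m) \to \infty$. For a special SDR tower this follows directly from Proposition~\ref{P4.1}: each single step $N_{F_{k+1}/F_k}$ increases the depth by at least $c/p - e(F_{k+1}/F)^{-1}$, and the correction series $\sum_{k} e(F_{k+1}/F)^{-1}$ converges because ramification indices grow at least geometrically. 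For a general SDR tower, apply Proposition~\ref{P4.9} to obtain a tamely ramified $F_{n_0}'/F_{n_0}$ such that $F'_{\d} := (F_n F_{n_0}')_{n \geqslant n_0}$ is a special SDR tower; set $F_n' = F_n F_{n_0}'$. Because the totally $p$-ramified $F_{n+1}/F_n$ and the tamely ramified $F_n'/F_n$ are linearly disjoint over $F_n$ (their intersection would be simultaneously $p$-ramified and tamely ramified, hence trivial over $F_n$), the base-change identity $i_{F_n'/F_n} \circ N_{F_{n+1}/F_n} = N_{F_{n+1}'/F_n'} \circ i_{F_{n+1}'/F_{n+1}}$ holds, and the family $(i(a_n))_{n \geqslant n_0}$ lies in the analogous inverse limit for $F'_{\d}$; the special case already established gives $i_{F_n'/F_n}(a_n) = 0$.

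The hard part is the final descent from $i_{F_n'/F_n}(a_n) = 0$ back to $a_n = 0$. The projection formula $N_{F_n'/F_n} \circ i_{F_n'/F_n} = [F_n':F_n]$ yields $[F_n':F_n]\,a_n = 0$, which suffices whenever the degree is prime to $p$; in general the $p$-part (which can only come from the residue-field component of $F_n'/F_n$) must be handled via the explicit generator description of Subsection~\ref{S1.3}, using that local parameters of $F_n$ remain local parameters of $F_n'$ up to prime-to-$p$ tame ramification and that the Teichm\"uller basis of the residue field of $F_n$ extends to one for the residue field of $F_n'$. This shows that $i_{F_n'/F_n}$ sends the chosen topological basis of $K_N(F_n)_M$ to an independent subfamily in $K_N(F_n')_M$; the resulting injectivity then forces $a_n = 0$.
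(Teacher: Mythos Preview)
Your overall architecture matches the paper's: reduce to a special SDR tower $F'_{\d}$ via Proposition~\ref{P4.9}, kill the limit there using the norm-depth estimate of Proposition~\ref{P4.1}, and then descend. The descent step, however, contains a genuine gap.

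You claim that $i_{F_n'/F_n}$ sends the topological basis of $K_N(F_n)_M$ to an independent family in $K_N(F_n')_M$. This is false whenever the tame extension $F_{n_0}'/F_{n_0}$ supplied by Proposition~\ref{P4.9} has a nontrivial unramified part of $p$-power degree (and nothing in that proposition excludes this). If $\widetilde F$ denotes the maximal unramified $p$-extension of $F_n$ inside $F_n'$, say of degree $p^r$ with $r\geqslant 1$, then the $p^M$-primary element $\epsilon_0$ of $F_n$, whose $p^M$-th root generates precisely the unramified $\Z/p^M$-extension of $F_n$, becomes a $p^{\min(r,M)}$-th power in $F_n\widetilde F\subset F_n'$. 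Hence the generators $\varepsilon_{ie^*}$ of Subsection~\ref{S1.3} acquire strictly smaller order under $i_{F_n'/F_n}$, and the map is \emph{not} injective.

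The paper does not assert injectivity. Instead it sets $C_n=\ker\big(U_F^cK_N(F_n)_M\to U_F^cK_N(F_n')_M\big)$ and proceeds in two steps. First, $K_N(F_n\widetilde F)_M\to K_N(F_n')_M$ is injective because $[F_n':F_n\widetilde F]$ is prime to $p$, so $C_n$ already equals the kernel of $K_N(F_n)_M\to K_N(F_n\widetilde F)_M$. Second, extending an $\F_p$-basis of $F_n^{(N)}$ to one of $(F_n\widetilde F)^{(N)}$ shows that the $\varepsilon_{ja}$-part of the Shafarevich basis does inject, so $C_n$ lies in the subgroup generated by the $\epsilon_0$-dependent symbols $\varepsilon_{ie^*}$. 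For these one checks (passing through the special tower $F'_{\d}$) that $N_{F_{n+1}/F_n}(y)\in pK_N(F_n)_M$ for every such generator $y$, and this forces $\varprojlim_n C_n=0$. Your argument is missing exactly this last mechanism for handling the $\epsilon_0$-part of the kernel.

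A smaller point: your bound $B=v_F(p)p/(p-1)$ for the vanishing of $U_F^{c'}K_N(F_n)_M$ is correct only for $M=1$, and the reasoning ``all generators have unit-valuation at most $B$'' does not by itself yield $U^{c'}=0$ for $c'>B$. What you actually need is that every principal unit $1+x$ with $v_F(x)>v_F(p)\big(M+1/(p-1)\big)$ is already a $p^M$-th power in $F_n$, which follows from the standard $\exp/\log$ estimate and does give a bound independent of $n$.
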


\begin{proof} We can assume that $F_{\d}$ has 
 $(0,c)$ and $\zeta _M\in F:=F_0$.
Let $F'$ be a tamely ramified extension of $F$ such that 
the SDR tower $F'_{\d }=\{F'F_n\ |\ n\geqslant 0\}$ is special. 
Let $C_n$ be the kernel of 
the natural map $U_F^cK_N(F_n)_M$ to $U_F^cK_N(F_n')_M$ 
induced by the embedding 
$F_n\subset F'_n$. By Corollary \ref{C4.3},  
$\underset{n}\varprojlim U^c_FK_N(F'_n)_M=0$. It remains 
to prove that $\underset{n}\varprojlim C_n=0$. 

Suppose $\widetilde{F}$ is the maximal absolutely unramified extension of $F$ 
in $F'$ of $p$-power degree. Then property (2) of 
norm maps from Subsection 
\ref{S1.4} implies that 
the natural map $K_N(F_n\widetilde{F})_M\longrightarrow 
K_N(F_nF')_M$ is injective. 
Therefore, for $n\geqslant 0$, $C_n$ is the kernel of the natural map 
from  $K_N(F_n)_M$ to 
$K_N(F_n\widetilde{F})_M$. 

Consider the Shafarevich bases of these $K_N$-groups from Subsection \ref{S1.3} 
using the Hasse $p^M$-primary 
elements $\epsilon _0$. By extending an $\F _p$-basis of $F_n^{(N)}$ to an $\F _p$-basis of 
$(F_n\widetilde{F})^{(N)}$, we can assume that 
the corresponding part of the Shafarevich basis for 
$K_N(F_n\widetilde{F})_M$ extends the corresponding part of 
the Shafarevich basis for $K_N(F_n)_M$. This implies that 
$C_n$ is contained in the subgroup 
of $K_N(F_n)_M$ which is generated by the elements of 
the Shafarevich basis which depend on $\epsilon _0$. 
Going again to the SDR tower $F_{\d }'$ we obtain that 
if $y$ is a such generator in $K_n(F_{n+1})_M$ 
then $N_{F_{n+1}/F_n}(y)\in pK_N(F_n)_M$. 
Therefore, $\underset{n}\varprojlim C_n=0$, as required.  
\end{proof}

\begin{Cor} \label{C4.12} 
Suppose $F_{\d}$ is an SDR tower such that a primitive 
 $p^M$-th root of unity $\zeta _M$ belongs to 
$F_{\infty }$ and $\c F=X(F_{\d })$. Then 

a) for $n\geqslant 0$, there is a system of homomorphisms 
$$\c N_{\c F/F_n}:K_N(\c F)_M\longrightarrow K_N(F_n)_M$$
mapping $K_N(\c F)_M$ to the projective system 
$(K_N(F_n)_M, N_{F_{n+1}/F_n})_{n\geqslant 0}$ and defining a group 
isomorphism $K_N(\c F)_M\simeq\underset{n}\varprojlim K_N(F_n)_M$;

b) for any $n\geqslant 0$, there is an $m\geqslant n$ such that 
$$\c N_{\c F/F_n}(K_N(\c F)_M)=N_{F_m/F_n}(K_N(F_m)_M).$$

c) for all $n\gg 0$, $\c N_{\c F/F_n}(\{\bar t_1,\dots ,\bar t_N\}_M)
=\{\pi _1^{(n)},\dots ,\pi _N^{(n)}\}_M$, 
where $\bar t_1,\dots ,\bar t_N$ and $\pi _1^{(n)},\dots ,\pi _N^{(n)}$ 
are the systems of local parameters in $\c F$ and, resp., $F_n$, and 
$\{\ ,\dots ,\ \}_M$ denotes the symbol 
$\{\ ,\dots ,\ \}$ taken modulo $p^M$.
\end{Cor}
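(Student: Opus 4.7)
The plan is to mimic the proof of Proposition \ref{P4.5} and Corollary \ref{4.6}, which handled topological $K$-groups for special SDR towers, working now modulo $p^M$ for an arbitrary SDR tower. The two key inputs are the vanishing $\varprojlim_n U_F^cK_N(F_n)_M=0$ from the preceding Proposition and the tame-ramification trick of Proposition \ref{P4.9}. First, by Proposition \ref{P4.9} I choose a tamely ramified extension $F'/F_{n_0}$ such that $F'_{\d}:=\{F'F_n\ |\ n\geqslant 0\}$ is a special SDR tower; let $\c F':=X(F'_{\d})$, a finite tamely ramified extension of $\c F$ provided by the field-of-norms functor. Since $\zeta _M\in F_{\infty}\subset F'_{\infty}$, Corollary \ref{4.6} applied to $F'_{\d}$ (reduced mod $p^M$) yields compatible maps $\c N_{\c F'/F'_n}$ defining an isomorphism $K_N(\c F')_M\simeq\varprojlim_n K_N(F'_n)_M$ together with the analogs of (b), (c) for $F'_{\d}$.

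Next I would establish the mod-$p^M$ analog of Proposition \ref{P4.4} for $F_{\d}$ itself. Subsection \ref{S4.2}, taken mod $p^M$, provides for small $c>0$ an isomorphism
$$\iota^{(c)}:K_N(\c F)_M\To\varprojlim_n K_N(F_n)_M/U_F^cK_N(F_n)_M$$
whose connecting morphisms are the $j_n$ mod $p^M$. The key claim is that for sufficiently small $c>0$ and all $n\gg 0$ one has $j_n\equiv N_{F_{n+1}/F_n}\pmod{U_F^cK_N(F_n)_M}$ on $K_N(F_{n+1})_M$. I would prove this by descending from the special tower $F'_{\d}$: apply Proposition \ref{P4.4} to $F'_{\d}$ and then push down via $N_{F'_n/F_n}$, using the norm-tower identity
$$N_{F'_n/F_n}\circ N_{F'_{n+1}/F'_n}=N_{F_{n+1}/F_n}\circ N_{F'_{n+1}/F_{n+1}},$$
together with property (3) of Subsection \ref{S1.4} (that $N_{F'_n/F_n}\circ i_{F'_n/F_n}$ is multiplication by $[F'_n:F_n]$).

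Once this identification is available, the exact sequence
$$0\to U_F^cK_N(F_n)_M\to K_N(F_n)_M\to K_N(F_n)_M/U_F^cK_N(F_n)_M\to 0,$$
the preceding Proposition, and the Mittag-Leffler condition (which kills $\varprojlim{}^1$) together yield that $\varprojlim_n K_N(F_n)_M\to\varprojlim_n K_N(F_n)_M/U_F^cK_N(F_n)_M$ is an isomorphism; composition with $\iota^{(c)}$ gives (a), and the $\c N_{\c F/F_n}$ are the component projections. Part (c) is a direct computation on the generators $\{\bar t_1,\dots,\bar t_N\}_M$ using Corollary \ref{4.6}(b) for $F'_{\d}$ and descent, while (b) follows as in Corollary \ref{4.6}(a) from Mittag-Leffler applied to the now-identified norm system. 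The main obstacle is the second step above: the tame-ramification descent through $F'/F_{n_0}$ must be made compatible with the norm maps $N_{F'_n/F_n}$, and care is required because $[F':F_{n_0}]$ can carry a $p$-power factor from the $N$-th residue-field extension, so the comparison is only valid modulo $U_F^c$ rather than exactly.
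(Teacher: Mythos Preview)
Your overall strategy is the natural one and is essentially what the paper intends: the Corollary is stated without proof, to be deduced from the preceding Proposition exactly as Proposition~\ref{P4.5} was deduced from Corollary~\ref{C4.3} and Proposition~\ref{P4.4}. Your use of Proposition~\ref{P4.9} to pass to a special tower $F'_{\d}$, the invocation of $\varprojlim_n U_F^cK_N(F_n)_M=0$, and the Mittag--Leffler argument are all correct.

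There is, however, a genuine gap in your second step. Your descent argument gives only
\[
i_{F_n\to F'_n}\bigl(j_n(x)-N_{F_{n+1}/F_n}(x)\bigr)\in U_F^{c^0}K_N(F'_n)_M,
\]
using $i_n\circ j_n=j'_n\circ i_{n+1}$, base-change for the norm, and Proposition~\ref{P4.4} for $F'_{\d}$. But because $[F':F_{n_0}]$ may carry a $p$-power factor from the unramified part, the map $i_{F_n\to F'_n}$ on $K_N(F_n)_M$ need not be injective, so this does \emph{not} yield $j_n\equiv N_{F_{n+1}/F_n}\pmod{U_F^{c^0}}$ as you claim. Your remark that ``the comparison is only valid modulo $U_F^c$'' misidentifies the issue: modulo $U_F^c$ is exactly what is wanted, and the problem is the kernel of $i_n$, not a loss of precision.

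The fix is to treat this kernel exactly as in the proof of the preceding Proposition: factor $F'/F_{n_0}$ through the maximal unramified $p$-power subextension $\widetilde F$; then $i_{\widetilde F_n\to F'_n}$ is injective on $K_N(\cdot)_M$ (prime-to-$p$ degree), and the kernel $C_n$ of $i_{F_n\to \widetilde F_n}$ lies in the span of the $\epsilon_0$-dependent Shafarevich generators. Thus $j_n(x)-N_{F_{n+1}/F_n}(x)$ lies in $C_n+U_F^{c^0}K_N(F_n)_M$. Since $N_{F_{n+1}/F_n}$ sends the $\epsilon_0$-part into $pK_N(F_n)_M$ (again as in the preceding Proposition) and $N_{F_m/F_n}$ maps $U^{c^0}$ into $C_n$ plus arbitrarily high $U$-filtration (via Proposition~\ref{P4.1} for $F'_{\d}$ and the same kernel analysis), the sequence $N_{F_m/F_n}(a_m)$ still converges, and the limit defines $\c N_{\c F/F_n}$ with all the required properties. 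Parts (b) and (c) then follow as you indicate.
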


\section{Applications to the Hilbert symbol}\label{S5}

As earlier let $F$ be an $N$-dimensional local field of characteristic 0 
with the first residue field of characteristic $p$ and a 
fixed system of local parameters $\pi _1$, \dots , $\pi _N$; we use the  
notation $\c F$ for an $N$-dimensional local field of characteristic $p$ 
with a system of local parameters $\bar t_1,\dots ,\bar t_N$. The last residue field of 
both $F$ and $\c F$ will be denoted by $k$.

\subsection{Parshin's reciprocity map.}\label{S5.1}

Consider the non-degenerate perfect Witt-Artin-Schreier pairing 
\begin{equation} \label{E5.1} 
W_M(\c  F)/\wp W_M(\c  F) \times 
\Gamma_{\c F}(p)/p^M \to \Z/p^M,
\end{equation} 
where for any $w\in W_M(\c F)$, $\wp (w)=\sigma (w)-w$, 
and $\Gamma _{\c F}(p)$ is the Galois group of the maximal 
abelian $p$-extension $\c F(p)$ of $\c F$. 
Then the Witt symbol, cf. Subsection \ref{S2.2}, 
implies the existence of injective group homomorphisms 
$$\widetilde{\Theta }^P_{\c F}:K_N^t(\c F)/p^M
\longrightarrow \Gamma _{\c F}(p)/p^M=
\Hom (W_M(\c F)/\wp (W_M(\c F)),\Z/p^M).$$
By taking the projective limit over  
$M\in\N $, we obtain an injective homomorphism 
$\widetilde{\Theta }^P_{\c F}:K_N^t(\c F)
\longrightarrow \Gamma _{\c F}(p)$. 
Here, as in the Introduction, we use 
the same notation 
for homomorphisms and their reductions modulo $p^M$. 
Then the explicit formula 
for the Witt symbol from  
Subsection \ref{S2.2}   
implies that 
$\widetilde{\Theta }^P_{\c F}$ is 
$P$-continuous. In \cite{Pa1,Pa2,Pa3} Parshin used 
$\widetilde{\Theta }^P_{\c F}$ to develop class 
field theory of higher local fields of 
characteristic $p$ by proving that for all 
finite abelian extensions $\c E$ of $\c F$ in 
$\c F(p)$, $\widetilde{\Theta }^P_{\c F}$ 
induces the group isomorphisms 
$$\widetilde{\Theta }^P_{\c E/\c F}:
K_N^t(\c F)/N_{\c E/\c F}K_N^t(\c E)\longrightarrow 
\Gal (\c E/\c F).$$
The explicit formula for the Witt symbol implies 
that the intersection of all $N_{\c E/\c F}K_N^t(\c E)$, where 
$\c E$ runs over the family of all finite abelian 
extensions of $\c F$, is trivial. Therefore, 
$\widetilde\Theta ^P_{\c F}$ is the composition of 
the canonical embedding 
$$j_{\c F}:K_N^t(\c F)\longrightarrow \widehat{K}_N^t(\c F)
:=\underset{\c E}\varprojlim K_N^t(\c F)/N_{\c E/\c F}K_N^t(\c E)$$
and the isomorphism 
$
\widehat{\Theta}^P_{\c F}:=\underset{\c E}\varprojlim 
\widetilde{\Theta }^P_{\c E/\c F}:
\widehat{K}_N^t(\c F)\longrightarrow \Gamma _{\c F}(p)$.

This gives the morphisms 
\begin{equation}\label{E5.1a}
\Gamma _{\c F}^{ab}/p^M\overset{\Theta ^P_{\c F}}\longrightarrow 
\widehat{K}_N^t(\c F)/p^M\overset{j_{\c F}}\longleftarrow K_N^t(\c F)/p^M,
\end{equation}
where $\Theta ^P_{\c F}=
\left (\widetilde{\Theta }_{\c E/\c F}^{P}\right )^{-1}$ is 
a group isomorphism  
and $j_{\c F}$ is embedding with a dense image.

Let $\c F^{ur}$ be the maximal absolutely 
unramified extension of $\c F$ in $\c F(p)$. Denote by 
$\varphi _{\c F}\in\Gal (\c F^{ur}/\c F)$ 
the Frobenius automorphism of this 
extension. Let $\c F^+$ be the subfield in $\c F(p)$ 
invariant under the action of 
$\widetilde{\Theta }^P_{\c F}(\{\bar t_1,\dots ,\bar t_N\})$. 
Using the explicit formula for the pairing \eqref{E2.2a} from 
Subsection \ref{S2.2} one can easily obtain the following properties:

a) $\c F(p)=\c F^+\c F^{ur}$ and $\c F^+\cap\c F^{ur}=\c F$;

b)\ $\{\bar t_1,\dots ,\bar t_N\}$ generates the group 
$\underset{\c E}\bigcap N_{\c E/\c F}K_N^t(\c E)$, where 
$\c E$ runs over the family of all 
$\c F\subset \c E\subset \c F^+$ such that 
$[\c E:\c F]<\infty $;

c)\ $\widetilde{\Theta }_{\c F}^P(VK_N(\c F))$ is  a closed subgroup in 
$\Gamma _{\c F}(p)^{ab}$ and its invariant subfield is $\c F^{ur}$;

d) $\widetilde{\Theta }_{\c F}^P
(\{\bar t_1,\dots ,\bar t_N\})|_{\c F^{ur}}=\varphi _{\c F}$;

e) $VK_N^t(\c F)=\underset{\c E}\bigcap 
N_{\c E/\c F}K_N^t(\c E)$, where $\c E$ runs over 
the family of all finite extensions of $\c F$ in $\c F ^{ur}$.
\medskip 


\subsection{Fesenko's reciprocity map.} \label{S5.2} 

 In \cite{Fe}, Fesenko defined the reciprocity 
map for higher local fields of arbitrary characteristic. This construction 
can be specified in our situation as follows.

Let $K$ be either $\c F$ or $F$. Let $L$ 
be a finite extension of $K$ in its maximal abelian $p$-extension 
$K(p)$. Denote by 
$L^{ur}$ and $K^{ur}$ the maximal absolutely unramified 
extensions of $L$ and, resp. $K$, in $K(p)$. 
Set $L_0=K^{ur}\cap L$. 

For any $\tau\in\Gal (L/K)$, consider its lift 
$\tilde\tau \in\Gal (L^{ur}/K)$ such that 
$\tilde\tau |_{K^{ur}}=\varphi _K^i$, where 
$i\in\N$ and $\varphi _K$ is the Frobenius 
automorphism of $K^{ur}/K$. Such lift exists because 
$K^{ur}$ and $L$ are linearly disjoint over $L_0$. 
Let $\Sigma $ be the subfield 
of $\tilde\tau $-invariants in $L^{ur}$. 
Then $\Sigma $ is a finite extension of $K$ 
such that $\Sigma K^{ur}=L^{ur}$. Denote by 
$\pi _1^{\Sigma },\dots ,\pi _N^{\Sigma }$ 
a system of local parameters in $\Sigma $. 
Then Fesenko's reciprocity map is defined to be  
$$\Theta ^{\Phi }_{L/K}:\Gal (L/K)\longrightarrow 
K^t_N(K)/N_{L/K}(K_N^t(L))$$
such that for any $\tau\in \Gal (L/K)$, 
$\Theta ^{\Phi }_{L/K}(\tau )=
\cl _{L/K}(N_{\Sigma /K}(\{\pi _1^{\Sigma },\dots ,
\pi _N^{\Sigma }\})$, 
 where for any $\alpha\in K_N^t(K)$, 
$\cl _{L/K}(\alpha )$ is the image of $\alpha $ 
under the natural projection of 
$K_N^t(K)$ to $K_N^t(K)/ 
N_{L/K}(K_N^t(L))$. The maps $\Theta ^{\Phi }_{L/K}$ 
are well-defined group homomorphisms.  
The proof generalises  
Neukirch's 1-dimensional approach from \cite{Ne}.

\begin{remark} Fesenko establishes his construction of class field 
theory for higher local fields by proving that 
all $\Theta ^{\Phi }_{L/K}$ are isomorphisms. We do not  
assume this until the introduction of the $M$-th 
Hilbert symbol in Subsection \ref{S5.5}. 
\end{remark}

Taking projective limit we obtain Fesenko's reciprocity map in the form 
$$\Theta ^{\Phi }_K:\Gamma _K^{ab}
\longrightarrow\underset{L}\varprojlim K_N^t(K)/
N_{L/K}K_N^t(L):=\widehat{K}_N^t(K).$$

The following properties follow directly from 
the above definitions.

a) Suppose $L\subset K^{ur}$ and 
$\tau =\varphi _K|_L$. Then 
$\Theta ^{\Phi }_{L/\c K}(\tau )=
\cl _{L/\c K}(\{u_1,\dots ,u_N\})$, 
where $u_1,\dots ,u_N$ is a system of local parameters in $K$. 
\medskip 

b) Suppose $L/K$ is a finite abelian extension, 
$K_1$ is a finite field extension of $K$ and $L_1=LK_1$. 
Then there is a natural group homomorphism 
$\kappa :\Gal (L_1/K_1)\longrightarrow \Gal (L/K)$ and 
the diagram 
$$\xymatrix{\Gal (L_1/K_1)
\ar[d]^{\kappa}\ar[rrr]^{\Theta ^{\Phi }_{L_1/K_1}}&&&
K_N^t(K_1)/N_{L_1/K_1}K_N^t(L_1)\ar[d]^{N_{K_1/K}}\\
\Gal (L/K)\ar[rrr]^{\Theta ^{\Phi }_{L/K}}
&&& K_N^t(K)/N_{L/K}K_N^t(L)
}$$
is commutative. 

The following proposition shows that Parshin's and Fesenko's reciprocity 
maps coincide in the case of fields of characteristic $p$.

\begin{Prop} \label{P3.1}
 Suppose $\c E$ is a finite abelian extension of 
$\c F$ in $\c F(p)$. Then the diagram 
$$\xymatrix{\Gamma _{\c F}(p)
\ar[d]&&&
K_N^t(\c F)\ar[lll]_{\widetilde{\Theta} ^P_{\c F}}\ar[d]\\
\Gamma _{\c E/\c F}\ar[rrr]^
{\Theta ^{\Phi }_{\c E/\c F}}
&&&K_N^t(\c F)/N_{\c E/\c F}K_N^t(\c E)
}$$
is commutative, where the vertical 
maps are the natural projections.  
\end{Prop}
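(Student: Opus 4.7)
The approach is to show that Fesenko's map $\Theta^\Phi_{\c E/\c F}$ is inverse to Parshin's reciprocity isomorphism $\widetilde{\Theta}^P_{\c E/\c F}:K_N^t(\c F)/N_{\c E/\c F}K_N^t(\c E)\to\Gal(\c E/\c F)$ established in Subsection \ref{S5.1}; this is equivalent to the stated commutativity. Since both are continuous group homomorphisms, it suffices to verify, for a $P$-topological set of generators of $K_N^t(\c F)$, that $\Theta^\Phi_{\c E/\c F}(\widetilde{\Theta}^P_{\c F}(\alpha)|_{\c E})=\cl_{\c E/\c F}(\alpha)$. The natural generators are $\varepsilon_0=\{\bar t_1,\dots,\bar t_N\}$ and the unit symbols $\varepsilon_{ja}$ generating $VK_N^t(\c F)$ from Subsection \ref{S1.3}, and the verification is organised via the splitting $\c F(p)=\c F^+\c F^{ur}$ recorded in items (a)--(e) of Subsection \ref{S5.1}.

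The central technical input is the norm-functoriality of Parshin's map: for every finite extension $L/\c F$ of higher local fields of characteristic $p$ one has
\begin{equation*}
\widetilde{\Theta}^P_{\c F}\circ N_{L/\c F}=\mathrm{res}_{L/\c F}\circ\widetilde{\Theta}^P_L,
\end{equation*}
where $\mathrm{res}_{L/\c F}:\Gamma_L^{ab}\to\Gamma_{\c F}^{ab}$ is the natural restriction. This is a formal dualisation of the transfer identity $[w,N_{L/\c F}\beta)_M^{\c F}=[w,\beta)_M^L$ for the Witt symbol (valid because $w\in W_M(\c F)\subset W_M(L)$), and is proved by tracing the residue formula of Subsection \ref{S2.1} through the definition of the norm on topological Milnor $K$-groups recalled in Subsection \ref{S1.4}.

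Granted norm-functoriality, the comparison is direct. For $\tau\in\Gal(\c E/\c F)$, a Frobenius lift $\tilde\tau\in\Gal(\c E^{ur}/\c F)$ with $\tilde\tau|_{\c F^{ur}}=\varphi_{\c F}^{i}$, and the fixed field $\Sigma$ of $\tilde\tau$,
\begin{equation*}
\widetilde{\Theta}^P_{\c F}\bigl(N_{\Sigma/\c F}\{\pi_1^\Sigma,\dots,\pi_N^\Sigma\}\bigr)=\mathrm{res}_{\Sigma/\c F}\bigl(\widetilde{\Theta}^P_\Sigma\{\pi_1^\Sigma,\dots,\pi_N^\Sigma\}\bigr),
\end{equation*}
and by property (d) of Subsection \ref{S5.1} applied to $\Sigma$, the right-hand side acts as the Frobenius $\varphi_\Sigma$ on $\Sigma^{ur}\supseteq\c E^{ur}$; its restriction to $\c E$ therefore agrees with $\tilde\tau|_{\c E}=\tau$. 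The injectivity of $\widetilde{\Theta}^P_{\c E/\c F}$ then forces $N_{\Sigma/\c F}\{\pi_1^\Sigma,\dots,\pi_N^\Sigma\}\equiv\widetilde{\Theta}^{P,-1}_{\c E/\c F}(\tau)\pmod{N_{\c E/\c F}K_N^t(\c E)}$, which is exactly the commutativity statement.

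The main obstacle is the honest verification of norm-functoriality together with the independence of Fesenko's output from the choice of Frobenius lift $\tilde\tau$ and of local parameters in $\Sigma$. Independence is handled by properties (c) and (e) of Subsection \ref{S5.1}: a change of $\tilde\tau$ modifies the output by an element of $\widetilde{\Theta}^P_{\c F}(VK_N^t(\c F))\cap\Gal(\c E^{ur}/\c E)$, which by (e) lies in $N_{\c E/\c F}K_N^t(\c E)$. The two extremal cases $\c E\subset\c F^{ur}$ and $\c E\subset\c F^+$ can also be verified directly, the former by combining Fesenko's property (a) of Subsection \ref{S5.2} with Parshin's (d), the latter by combining Fesenko's explicit recipe with Parshin's (b), and these provide useful consistency checks on the general argument.
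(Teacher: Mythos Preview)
Your route is genuinely different from the paper's. The paper does not invoke norm-functoriality of $\widetilde\Theta^P$ at all; instead it checks the identity on the generators $\alpha=\{\bar t_1',\dots,\bar t_N'\}$ (over all choices of local parameters), using only that $\widetilde\Theta^P_{\c F}(\alpha)$ acts as $\varphi_{\c F}$ on $\c F^{ur}$ and trivially on $\c F^+$, together with the base-change property (b) of Fesenko's map in Subsection~\ref{S5.2} and the characterisations (b), (e) in Subsection~\ref{S5.1} of the two pieces of the splitting $\c F(p)=\c F^+\c F^{ur}$. Your approach via the projection formula $[w,N_{\Sigma/\c F}\beta)^{\c F}_M=[w,\beta)^{\Sigma}_M$ is cleaner in principle and closer to the classical Neukirch argument, and it avoids having to vary the system of local parameters.

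There is, however, a real gap in the central step. From property~(d) applied to $\Sigma$ you correctly get that $\widetilde\Theta^P_\Sigma\{\pi_1^\Sigma,\dots,\pi_N^\Sigma\}$ acts on $\Sigma^{ur}\supset\c E$ as $\varphi_\Sigma$; but the assertion that $\varphi_\Sigma|_{\c E}=\tilde\tau|_{\c E}=\tau$ is not automatic. Both $\tilde\tau$ and $\varphi_\Sigma|_{\c E^{ur}}$ are topological generators of $\Gal(\c E^{ur}/\Sigma)\simeq\Z_p$, and a short residue-field computation gives $\tilde\tau=(\varphi_\Sigma|_{\c E^{ur}})^{u}$ with $u=i\,p^{-v_p(i)}\in\Z_p^\times$; hence $\varphi_\Sigma|_{\c E}=\tau^{u^{-1}}$, which equals $\tau$ only when $i$ is a $p$-power. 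For a general $\tau$ one cannot always choose such an $i$ (e.g.\ if $\c E\cap\c F^{ur}\ne\c F$ and $\tau$ restricts to a non-identity, non-Frobenius element there). Your argument is easily repaired once you restrict to the $\tau$'s that actually arise from your chosen generators: for $\alpha=\varepsilon_0$ one may take $i=1$, and for $\alpha=\varepsilon_{ja}\in VK_N^t(\c F)$ one has $\widetilde\Theta^P_{\c F}(\alpha)|_{\c F^{ur}}=\id$ by property~(c), so $i=[L_0:\c F]$ works. You should make this choice explicit and justify $\varphi_\Sigma|_{\c E}=\tau$ at that point; as written, the ``therefore'' hides a non-trivial and in general false identification.
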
 

\begin{proof} The group $K^t_N(\c F)$ is generated by 
the symbols $\{\bar t_1',\dots ,\bar t_N'\}$, where 
$\bar t'_1,\dots ,\bar t'_N$ run over all systems of local parameters in 
$\c F$. 
Therefore, it will be sufficient to consider the 
images of $\alpha =\{\bar t_1,\dots ,\bar t_N\}$. 
By Subsection \ref{S3.1} 
we have $\widetilde{\Theta}_{\c F}^P
(\alpha )|_{\c F^{ur}}=\varphi _{\c F}$ 
and $\widetilde{\Theta }_{\c F}^P(\alpha )|_{\c F^+}=\id $. 
According to properties a) and b) of Subsection \ref{S5.2}, this implies that 
$\Theta ^{\Phi}_{\c E/\c F}
(\widetilde{\Theta} ^P(\alpha ))$ 
belongs to 

--- 
$\cl _{\c E/\c F}
\left (\underset{\c E_1}\cap N_{\c E_1/\c F}
(K_N^t(E_1))\right )$ , where $\c E_1$ runs over 
the set of all finite extensions of $\c F$ in $\c F^+$, 
which is a group generated by 
$\cl _{\c E/\c F}(\alpha )$;

--- $\cl _{\c E/\c F}(\alpha\,
\mathrm{mod}\,VK_N(\c F))$.

Therefore, $\Theta ^{\Phi}_{\c E/\c F}
(\widetilde{\Theta} ^P(\alpha ))
=\cl _{\c E/\c F}(\alpha )$.
 \end{proof}


\subsection{Compatibility of class-field theories} \label{C5.3}

Suppose $E$ is a finite field extension of 
$F$ and 
$F_{\d }=(F_n)_{n\geqslant 0}$ with $F_0=F$, 
is a special SDR tower. Then $E_{\d }=(E_n)_{n\geqslant 0}$, where 
$E_n=EF_n$, is also a special SDR tower and 
$\c E=X(E_{\d })$ is a finite separable extension of 
$\c F=X(F_{\d })$. Notice that for any $n\geqslant 0$, there is a commutative diagram 

$$\xymatrix{K_N^t(\c E)\ar[rrr]^{N_{\c E/\c F}}
\ar[d]^{\c N_{\c E/E_n}}&&&
K_N^t(\c F)\ar[d]^{\c N_{\c F/F_n}}\\
K_N^t(E_n)\ar[rrr]^{N_{E_n/F_n}}&&&
K_N^t(F_n)
}$$

We shall use the notation $\c N_{\c E/F, n}$ for the  
 morphism 
$$K_N^t(\c F)/N_{\c E/\c F}(K_N^t(\c E)
\longrightarrow K_N^t(F_n)/N_{E_n/F_n}K_N^t(E_n)$$
induced by $\c N_{\c F/F_n}$. 
 
Suppose that $E$ is abelian over $F$. Then $\c E$ is also abelian over $\c F$, 
and for any 
$n\geqslant 0$, we have natural homomophisms  
$$\iota _{\c E/F,n}:\Gal (\c E/\c F)\longrightarrow 
\Gal (E_n/F_n)$$
which are isomorphisms for $n\gg 0$. 

Let $\iota _{\c E/\c F,0}:=\iota _{\c E/\c F}$ and 
$\c N_{\c E/\c F,0}:=\c N_{\c E/\c F}$.

\begin{Prop} \label{P3.9} The diagram 
 $$\xymatrix{\Gal (\c E/\c F)
\ar[rrr]^{\Theta ^{\Phi }_{\c E/\c F}}
\ar[d]^{\iota _{\c E/F}}&&&
K_N^t(\c F)/N_{\c E/\c F}K_N^t(\c E)
\ar[d]^{\c N_{\c E/\c F}}\\
\Gal (E/F)\ar[rrr]^{\Theta ^{\Phi }_{E/F}}
&&& K_N^t(F)/N_{E/F}K_N^t(E)
}$$
is commutative. 
\end{Prop}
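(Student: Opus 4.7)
\textbf{Plan for Proposition \ref{P3.9}.} The plan is to unwind Fesenko's definition on both sides and transport the defining Neukirch data across the field-of-norms functor: first establishing the analogous diagram at level $n$ for $n \gg 0$ by a direct comparison of the Neukirch data, then descending to level $0$ via the naturality property (b) of Subsection \ref{S5.2} applied to the extension $F_n/F$.

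Given $\tau \in \Gal(\c E/\c F)$, lift it to $\tilde\tau \in \Gal(\c E^{ur}/\c F)$ with $\tilde\tau|_{\c F^{ur}} = \varphi_{\c F}^i$, let $\tilde\Sigma$ be the fixed field, and choose local parameters $\bar\pi_1^{\tilde\Sigma}, \ldots, \bar\pi_N^{\tilde\Sigma}$ of $\tilde\Sigma$; then $\Theta^{\Phi}_{\c E/\c F}(\tau) = \cl_{\c E/\c F}\bigl(N_{\tilde\Sigma/\c F}\{\bar\pi_1^{\tilde\Sigma}, \ldots, \bar\pi_N^{\tilde\Sigma}\}\bigr)$. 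For $n \gg 0$ the isomorphism $\iota_{\c E/F, n} : \Gal(\c E/\c F) \to \Gal(E_n/F_n)$ furnished by the field-of-norms functor, together with the compatibility of maximal absolutely unramified sub-$p$-extensions under $X$, produces a lift $\tilde\tau_n \in \Gal(E_n^{ur}/F_n)$ of $\iota_{\c E/F, n}(\tau)$ with $\tilde\tau_n|_{F_n^{ur}} = \varphi_{F_n}^i$; its fixed field $\Sigma^{(n)} \subset E_n^{ur}$ then generates a special SDR tower $(\Sigma^{(n)} F_m)_{m \geq n}$ whose image under $X$ is $\tilde\Sigma$. Corollary \ref{4.6}(b) applied to this tower identifies $\c N_{\tilde\Sigma/\Sigma^{(n)}}\{\bar\pi_1^{\tilde\Sigma}, \ldots, \bar\pi_N^{\tilde\Sigma}\}$ with $\{\pi_1^{\Sigma^{(n)}}, \ldots, \pi_N^{\Sigma^{(n)}}\}$ for some system of local parameters of $\Sigma^{(n)}$, and the commutative norm square from the beginning of Subsection \ref{C5.3} then yields
$$\c N_{\c E/F, n}\bigl(\Theta^{\Phi}_{\c E/\c F}(\tau)\bigr) = \cl_{E_n/F_n}\bigl(N_{\Sigma^{(n)}/F_n}\{\pi_1^{\Sigma^{(n)}}, \ldots, \pi_N^{\Sigma^{(n)}}\}\bigr) = \Theta^{\Phi}_{E_n/F_n}\bigl(\iota_{\c E/F, n}(\tau)\bigr),$$
the last equality being Fesenko's definition applied to the data $(\tilde\tau_n, \Sigma^{(n)})$.

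To descend to level $0$, apply $N_{F_n/F}$: by Proposition \ref{P4.5}(a), $N_{F_n/F} \circ \c N_{\c F/F_n} = \c N_{\c F/F}$, so the left-hand side becomes $\c N_{\c E/\c F}(\Theta^{\Phi}_{\c E/\c F}(\tau))$; and by property (b) of Subsection \ref{S5.2} applied with $K = F$, $K_1 = F_n$, $L = E$, $L_1 = E_n$, the right-hand side becomes $\Theta^{\Phi}_{E/F}(\kappa(\iota_{\c E/F, n}(\tau))) = \Theta^{\Phi}_{E/F}(\iota_{\c E/F}(\tau))$, since the restriction $\kappa$ composed with $\iota_{\c E/F, n}$ equals $\iota_{\c E/F}$. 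This is the claimed commutativity. The main technical hurdle is the construction of the lift $\tilde\tau_n$ with the prescribed Frobenius action and the identification of $X$ of the tower built from $\Sigma^{(n)}$ with $\tilde\Sigma$; this requires the functoriality of the field-of-norms under passage to maximal unramified sub-$p$-extensions and the matching of the Frobenii $\varphi_{\c F}$ and $\varphi_{F_n}$ under this functoriality, which is implicit in the formalism of \cite{Sch} but needs explicit verification here. Once this is in hand, everything else reduces to the diagram chase described above.
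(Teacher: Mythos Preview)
Your proposal is correct and follows essentially the same route as the paper: transport the Neukirch data $(\tilde\tau,\tilde\Sigma)$ through the field-of-norms functor to obtain $(\tilde\tau_n,\Sigma_n)$ at each level, observe that the $\Sigma_n$ form an SDR tower with $X(\Sigma_{\d})=\tilde\Sigma$, use the norm square of Subsection~\ref{C5.3} together with Corollary~\ref{4.6}(b) to establish level-$n$ commutativity, and then descend via property~(b) of Subsection~\ref{S5.2}. The paper's write-up is terser---it absorbs the descent into a single line ``It remains to apply property~b)''---but the logical skeleton is identical, and you have correctly flagged the one genuine technical point (matching of Frobenii and the SDR property of the $\Sigma$-tower) that the paper leaves implicit.
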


\begin{proof} Let $\tau\in\Gal (\c E/\c F)$. Construct 
 $\tilde\tau\in\Gal (\c E^{ur}/\c F)$ and 
$\c S=\c E^{ur}|_{\tilde\tau =\id }$ to define 
Fesenko's element 
$$\Theta ^{\Phi }_{\c E/\c F}(\tau )=
N_{\c S/\c F}(\{\bar u_1,\dots ,\bar u_N\})
\in K_N^t(\c K)/N_{\c L/\c K}K_N^t(\c L).$$
where $\bar u_1,\dots ,\bar u_N$ is a system of 
local parameters of $\c S$. 

For any $n\geqslant 0$, consider an analogue 
$$\iota _{\c E^{ur}/\c F,n}:\Gal (\c E^{ur}/\c F)
\longrightarrow\Gal (E_n^{ur}/F_n)$$
of $\iota _{\c E/\c K,n}$. 
Let $\tau _n=\iota _{\c E/\c F,n}(\tau )$, 
$\tilde\tau _n=\iota _{\c E^{ur}/\c F,_n}(\tilde\tau )$ and 
set $\Sigma _n=L_n^{ur}|_{\tilde\tau _n=\id }$. 
Then from the construction of the field-of-norms functor $X$ 
it follows that $\Sigma _{\d }=(\Sigma _n)_{n\geqslant 0}$ 
is an SDR tower and $X(\Sigma _{\d })=\c S$. Therefore, 
for $n\gg 0$, 
$$\c N_{\c E/\c F,n}(\Theta ^{\Phi }_{\c E/\c F}(\tau ))
=\c N_{\c E/\c F,n}(N_{\c S/\c F}
(\{\bar u_1,\dots ,\bar u_N\})\,
\mathrm{mod}\,N_{\c E/\c F}K_N^t(\c E))$$
$$=N_{\Sigma _n/F_n}(\c N_{\c S/\Sigma _n}(\{\bar u_1,\dots ,\bar u_N\})
\,\mathrm{mod}\,N_{E_n/F_n}K_N^t(E_n))=
\Theta ^{\Phi }_{E_n/F_n}(\tau _n).$$
It remains to apply the property b) from Subsection \ref{S5.2}.
\end{proof}

Finally, we can use the results of Subsection \ref{S4.4} 
to establish the compatibility of class field 
theories for the fields $F$ and $\c F=X(F_{\d })$ 
if $F_{\d }$ is an arbitrary SDR tower such that $\zeta _M\in F_{\infty }$.  
This property can be stated in the following form. 

\begin{Cor} \label{C3.10} 
With the above notation and assumptions  
one has the following commutative diagram
$$\xymatrix{
\Gamma _{\c F}^{ab}/p^M\ar[rrr]^{\Theta ^\Phi _{\c F}}
\ar[d]^{\iota _{\c F/F}}&&&
\widehat{K}_N(\c F)/p^M
\ar[d]_{\widehat{\c N}_{\c F/F}} && K_N(\c F)/p^M \ar[ll]
\ar[d]^{\c N_{\c F/F}}\\
\Gamma _F^{ab}/p^M \ar[rrr]^{\Theta ^\Phi _F} &&&
\widehat{K}_N(F)/p^M && K_N(F)/p^M \ar[ll]
}$$
where the right horizontal maps are natural embeddings and 
$\iota _{\c F/F}:\Gamma _{\c F}\longrightarrow\Gamma _F$ is 
given by the field-of-norms functor. 
\end{Cor}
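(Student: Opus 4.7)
The plan is to bootstrap from Proposition \ref{P3.9} (which handles \emph{special} SDR towers at the level of individual finite abelian extensions) to arbitrary SDR towers with $\zeta_M \in F_{\infty}$ by invoking the mod-$p^M$ machinery developed in Subsection \ref{S4.4}. The key observation is that in the proof of Proposition \ref{P3.9} the specialness hypothesis enters only to guarantee the norm-compatible inverse limit description $K_N^t(\c F) = \varprojlim K_N^t(F_n)$ of Proposition \ref{P4.5}; its mod-$p^M$ replacement $K_N(\c F)_M = \varprojlim K_N(F_n)_M$, valid for arbitrary SDR towers when $\zeta_M \in F_{\infty}$, is precisely Corollary \ref{C4.12}.

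First, fix a finite abelian extension $E/F$ in $F^{ab}$ and set $E_n = EF_n$, $\c E = X(E_{\d})$. Since $\zeta_M \in F_{\infty}\subset E_{\infty}$, Corollary \ref{C4.12} applies to both $F_{\d}$ and $E_{\d}$ and produces a well-defined homomorphism
\begin{equation*}
\c N_{\c E/F}\colon K_N(\c F)_M\,/\,N_{\c E/\c F}K_N(\c E)_M \longrightarrow K_N(F)_M\,/\,N_{E/F}K_N(E)_M.
\end{equation*}
Now I would repeat the construction from the proof of Proposition \ref{P3.9}: given $\tau \in \Gal(\c E/\c F)$, lift it to $\tilde\tau \in \Gal(\c E^{ur}/\c F)$ over a power of the Frobenius, set $\c S = (\c E^{ur})^{\tilde\tau = \id}$, and form the tower of fixed fields $\Sigma_{\d} = (\Sigma_n)$ of the transported automorphisms $\tilde\tau_n = \iota_{\c E^{ur}/\c F,n}(\tilde\tau)$. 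From the construction of the field-of-norms functor, $\Sigma_{\d}$ is an SDR tower with $X(\Sigma_{\d}) = \c S$, and since $F_{\infty}\subset \Sigma_{\infty}$ we still have $\zeta_M \in \Sigma_{\infty}$. Hence Corollary \ref{C4.12} furnishes the maps $\c N_{\c S/\Sigma_n}$ as well, and the Neukirch-style explicit formula for Fesenko's reciprocity, combined with property b of Subsection \ref{S5.2}, yields the identity
\begin{equation*}
\c N_{\c E/F}\bigl(\Theta^{\Phi}_{\c E/\c F}(\tau)\bigr) = \Theta^{\Phi}_{E/F}\bigl(\iota_{\c E/F}(\tau)\bigr)
\end{equation*}
exactly as in the proof of Proposition \ref{P3.9}, now understood modulo $p^M$.

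Taking the inverse limit over all finite abelian $E/F$ converts this identity into the commutativity of the left square of the target diagram, because by construction $\Theta^{\Phi}_{\c F} = \varprojlim_E \Theta^{\Phi}_{\c E/\c F}$ and $\Theta^{\Phi}_F = \varprojlim_E \Theta^{\Phi}_{E/F}$, while the inverse limit of the maps $\c N_{\c E/F}$ is by definition $\widehat{\c N}_{\c F/F}$. The right square commutes tautologically, since $\widehat{\c N}_{\c F/F}$ is defined to be the map induced by $\c N_{\c F/F}$ on completions and the horizontal arrows are the canonical embeddings $K_N(\c F)_M \hookrightarrow \widehat{K}_N(\c F)/p^M$ and $K_N(F)_M \hookrightarrow \widehat{K}_N(F)/p^M$.

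The principal technical hurdle lies in the second paragraph: one must verify that the proof of Proposition \ref{P3.9} survives intact modulo $p^M$ once Proposition \ref{P4.5} is replaced by Corollary \ref{C4.12}. Concretely this amounts to checking that every use of the norm-compatible limit $K_N^t(\c F) = \varprojlim K_N^t(F_n)$ in that argument is rendered robust mod $p^M$ by the weaker Corollary \ref{C4.12}, and that the auxiliary SDR tower $\Sigma_{\d}$ automatically inherits the hypothesis $\zeta_M \in \Sigma_{\infty}$ from the one imposed on $F_{\d}$. Both checks are routine bookkeeping but require carefully unfolding the definitions of $\iota_{\c E^{ur}/\c F,n}$ and tracking the identifications supplied by $X$.
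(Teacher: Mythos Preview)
Your proposal is correct and matches the paper's approach. The paper does not give an explicit proof of this Corollary; it simply says ``we can use the results of Subsection \ref{S4.4}'' and states the diagram, so the intended argument is precisely what you describe: rerun the Neukirch--Fesenko computation from Proposition \ref{P3.9} with Proposition \ref{P4.5} and Corollary \ref{4.6} replaced by their mod-$p^M$ analogue Corollary \ref{C4.12} (parts a) and c)), which is available for arbitrary SDR towers once $\zeta_M\in F_\infty$. Your observation that the auxiliary tower $\Sigma_{\d}$ inherits $\zeta_M\in\Sigma_\infty$ from $F_\infty\subset\Sigma_\infty$ is exactly the check needed, and the right square is indeed tautological.
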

\medskip 

\subsection{Relating Witt-Artin-Schreier and Kummer theories}
\label{S5.4}

Consider an $N$-dimensional analogue $R(N)$ of Fontaine's ring. 
By definition, $R(N)=\underset{n}\varprojlim (O_{\bar F}/p)_n$, where 
the connecting morphisms are induced by the $p$-th power map on $O_{\bar F}$. 
If $r=(r_n\,\mathrm{mod}\, p)_{n\geqslant 0}\in R(N)$ with 
all $r_n\in \c O_{\bar F}$ and $m\in\Z $, set 
$r^{(m)}:=\lim _{n\to\infty }r_n^{p^{n+m}}\in\c O_{\hat F_{\infty }}$ 
and consider Fontaine's map 
$\gamma :W(R(N))\longrightarrow \c O_{\hat F_{\infty }}$ given by the 
correspondence 
$$(w_0,\dots ,w_n,\dots )\mapsto\sum _{n\geqslant 0}p^nw_n^{(0)}.$$

Let $F_{\d }$ be an SDR tower with parameters $(0,c)$. 
Then we have natural embeddings 
$$\c O_{\c F}=\underset{n}\varprojlim \,\c O_{F_n}/
\p _F^c\subset\underset{n}\varprojlim \,\c O_{\bar F}/\p _F^c=
\underset{n}\varprojlim \,\c O_{\bar F}/p=R(N),$$
where $\c O_{\c F}, \c O_{F_n}$ 
and $\c O_{\bar F}$ are the corresponding 
$N$-dimensional valuation rings. 
This implies that $\c F\subset R_0(N):=\mathrm{Frac}\, R(N)$.
Note that $R_0(N)$ is algebraicly closed and equals the completion 
(with respect to the first valuation) 
of the algebraic closure  of $\c F$ in $R_0(N)$. We have also 
a natural embedding $\c O_{L(\c F)}\subset W(R_0(N))$. 
In particular, $\Gamma _{\c F}$ acts on $R_0(N)$. 
In terms of the fixed system of local parameters in 
$L(\c F)$, $\c F$ and $F_n$, where $n\geqslant 0$, we have for all 
$1\leqslant i\leqslant N$, that 
$\bar t_i=\underset{n}\varprojlim \pi _i^{(n)}$, 
$t_i=[\bar t_i]\in W(R(N))$ and $\gamma (t_i)=\underset{n\to\infty }
\lim\pi _i^{(n)p^n}$. 

Suppose $\omega\in\Z _{\geqslant 0}$ and $F_{\d }$ is $\omega $-admissible, 
cf. Introduction for the deinition of an $\omega $-admissible tower. 
Then we can fix a primitive $p^{M+\omega }$-th primitive root of unity 
$\zeta _{M+\omega }\in F_{\omega }$ and introduce an 
element $H_{\omega }\in\c O_{L(\c F)}$ as follows.

Let $H'\in\c F$ be such that 
\begin{equation}\label{E5.4} 
H'\,\mathrm{mod}\,\p _{\c F}^{p^\omega c}=
 \zeta _{M+\omega }\,\mathrm{mod}\,\p _F^c
\end{equation}
under the identification 
$\c O_{\c F}/\p _{\c F}^{p^\omega c}=
\c O_{F_{\omega }}/\p _F^c$ from 
the definition of $\c F=X(F_{\d })$. 
Take any $H\in\c O_{L(\c F)}$ such that 
$H\,\mathrm{mod}\,p=H'$ and set 
$H_{\omega }=H^{p^{M+\omega }}-1$. 

Suppose $f\in\m ^0=
\{\underset{a>\bar 0}\sum w_a\underline{t}^a\ |\ w_a\in W(k)\}$. 
For any $\tau\in\Gamma _{\c F}$, let $a_{\tau }(f)=\tau (T)-T$, where 
$T\in W(R_0(N))$ is such that $\sigma (T)-T=f/H_{\omega }$. Clearly, 
$a_\tau (f)\in\Z _p$ for all $\tau\in\Gamma _{\c F}$. 

Suppose $g\in\hat F_{\infty }^*$. For any $\tau\in\Gamma _{\c F}$, 
define $b_{\tau }(g)\in\Z /p^M$ such that $(\tau U)U^{-1}=
\zeta _{M+\omega }^{p^{\omega }b_{\tau }(g)}$, where 
$U\in\hat{\bar F}$ is such that $U^{p^M}=g$. 
We use the identification $\Gamma _{\c F}=\Gal (\bar F/F_{\infty })$ given by the 
field-of-norms functor $X$. 

Recall that 
$$\underset{a>\bar 0}\sum w_a\underline{t}^a
\mapsto \gamma \left (\underset{a>\bar 0}\prod E(w_a,\underline{t}^a)\right )$$
defines a homomorphism 
$\theta :\m ^0\longrightarrow \hat F_{\infty }^*$, 
where $E(w,X)$ is the Shafarevich generalisation of the Artin-Hasse exponential, 
cf. Introduction.

\begin{Prop} \label{P5.4} 
For any $f\in\m ^0$ and $\tau\in\Gamma _{\c F}$, 
we have the equality 
$a_{\tau }(f)\,\mathrm{mod}\,p^M=b_\tau (\theta (f))$. 
\end{Prop}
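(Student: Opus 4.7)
The proposition identifies two Galois cocycles on $\Gamma_{\c F}$: the Artin-Schreier-Witt cocycle $a_\tau(f) = (\tau - 1)T$ attached to the solution $T \in W(R_0(N))$ of $\sigma T - T = f/H_\omega$, and the Kummer cocycle $b_\tau(\theta(f))$ attached to any $p^M$-th root of $\theta(f)$ in $\hat{\bar F}$. The bridge between the two is the Fontaine map $\gamma: W(R(N)) \to \c O_{\hat F_\infty}$ together with the fact that the congruence defining $H'$ forces $\gamma(H) = \zeta_{M+\omega}$, thereby making $H_\omega = H^{p^{M+\omega}} - 1$ the characteristic-$p$ shadow of the Kummer datum.

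I plan to construct an explicit $p^M$-th root $U$ of $\theta(f)$ and directly compute $\tau U/U$. Since $R_0(N)$ is algebraically closed, every unit in $W(R_0(N))$ admits a $p^M$-th root; pick $V \in W(R_0(N))$ with $V^{p^M} = \theta_1(f)$ and set $U := \gamma(V)$. Then $U^{p^M} = \theta(f)$ and, by $\Gamma_{\c F}$-equivariance of $\gamma$, $\tau U / U = \gamma(\tau V / V)$. The heart of the proof is to match $\tau V / V$ with $T$. Formally $\log V = p^{-M} \log \theta_1(f)$; the inverse formula $(1 - \sigma/p) \log \theta_1(f) = f$ from the Introduction, the substitution $f = H_\omega (\sigma - 1) T$, and the identity $\log(1 + H_\omega) = p^{M+\omega} \log H$ together yield, after dividing by $p^M$, the identification $V \equiv H^{p^\omega T}$ modulo $p^M$-th roots of unity lying in $\ker \gamma$. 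Applying $\tau - 1$ and then $\gamma$ gives $\tau U / U = \gamma(H)^{p^\omega(\tau T - T)} = \zeta_{M+\omega}^{p^\omega a_\tau(f)}$; comparison with the defining relation $\tau U / U = \zeta_{M+\omega}^{p^\omega b_\tau(\theta(f))}$ yields $a_\tau(f) \equiv b_\tau(\theta(f)) \pmod{p^M}$.

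The main obstacle is making the logarithmic computation rigorous. One needs a sufficiently large $p$-adically complete algebra containing $W(R_0(N))$ in which $\log V$, $\log(1 + H_\omega)$, and the formal division by $p^M$ all make sense simultaneously, while controlling the convergence of $\sum_{n \geq 0} (\sigma/p)^n f$ modulo $p^M$. One must also verify that the corrections arising from the ambiguities in the choices of $V$, of the lift $H$ of $H'$, and of $T$ all contribute trivially to $\tau U/U$ modulo $p^M$, with the $\Z_p$-torsor of solutions $T$ matching the $\mu_{p^M}$-torsor of $p^M$-th roots $V$ under the map $\gamma$. This verification constitutes the technical substance of \cite{Ab2}, and the present statement essentially specialises that comparison to the element $\theta(f)$.
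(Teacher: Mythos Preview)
Your overall picture is right---the proposition is a comparison of Artin--Schreier--Witt and Kummer data through Fontaine's ring, and the technical core lives in \cite{Ab1,Ab2}. But your sketch bypasses the one step that is genuinely new here, and two of your formal assertions are not correct as stated.

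First, the equality $\gamma(H)=\zeta_{M+\omega}$ does not hold. By construction $H'$ only satisfies the congruence $H'\equiv\zeta_{M+\omega}\bmod\p_F^c$ under the identification $\c O_{\c F}/\p_{\c F}^{p^\omega c}\simeq\c O_{F_\omega}/\p_F^c$, and $H$ is an \emph{arbitrary} lift of $H'$; so $\gamma(H)$ is only congruent to $\zeta_{M+\omega}$ modulo an ideal depending on $c$. Second, the expression $H^{p^\omega T}$ has no meaning: $T\in W(R_0(N))$ is not a $p$-adic integer, and $\log H$ does not converge (since $H$ is a unit near a root of unity, not near $1$), so the exponential rewriting cannot be made sense of in any completion of $W(R_0(N))$.

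The paper's proof handles exactly this difficulty, and this is where the $\omega$-admissibility hypothesis $cp^\omega>2v_F(p)/(p-1)$ is used---a condition your argument never invokes. One introduces the canonical element $\varepsilon\in R(N)$ with $\varepsilon^{(0)}=1$, $\varepsilon^{(M+\omega)}=\zeta_{M+\omega}$, and shows via the admissibility inequality that
\[
H_\omega=[\varepsilon]-1+([\varepsilon]-1)^2w_2+p^Mw_2^0
\]
for suitable $w_2,w_2^0$, whence $1/H_\omega\equiv 1/([\varepsilon]-1)+w\bmod p^M$ with $w\in W(R(N))$ satisfying $\lim_s\sigma^s(fw)=0$. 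This replaces $a_\tau(f)$ by the analogous cocycle $a'_\tau(f)$ built from $f/([\varepsilon]-1)$, and it is \emph{this} normalised situation (with $[\varepsilon]-1$ rather than the ad hoc $H_\omega$) that the Main Lemma of \cite{Ab1} covers. Your deferral to \cite{Ab2} for ``the general $H_\omega$'' is therefore misplaced: the cited references treat the $[\varepsilon]-1$ case, and the reduction from $H_\omega$ to $[\varepsilon]-1$ is precisely the content of the present proof.
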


\begin{proof} Let $\varepsilon\in R(N)$ be such that 
$\varepsilon ^{(0)}=1$ but $\varepsilon ^{(1)}\ne 1$. 
We assume that $\varepsilon ^{(M+\omega )}=\zeta _{M+\omega }$. 
Then relation \eqref{E5.4} implies that 
$$H\,\mathrm{mod}\,p= \varepsilon ^{p^{-(M+\omega )}}
\,\mathrm{mod}\,\p _{\c F}^{p^{\omega }c}.$$
The ideal $\p _{\c F}^{p^{\omega }c}$ is generated by 
the element $(\varepsilon -1)^{c(p-1)/pv_F(p)}$ and, therefore, 
$$H^{p^{\omega +1}}\,\mathrm{mod}\,p=\varepsilon ^{p^{-(M-1)}}
\,\mathrm{mod}\,(\varepsilon -1)^{cp^{\omega }(p-1)/v_F(p)}.$$
Using that $F_{\d}$ is $\omega $-admissible, we obtain 
the existence of $w_1\in W(R(N))$ and $w_1^0\in W(R_0(N))$ such that 
$$H^{p^{\omega +1}}=[\varepsilon ]^{p^{-(M-1)}}+
([\varepsilon ]-1)^2w_1+pw_1^0.$$
Therefore, there are $w_2\in W(R(N))$ and $w_2^0\in W(R_0(N))$ such that 
$$H_{\omega }=[\varepsilon ]-1+([\varepsilon ]-1)^2w_2+p^Mw_2^0$$
and for some $w\in W(R(N))$ we have 
$$\frac{1}{H_{\omega }}\equiv 
\left (\frac{1}{[\varepsilon ]-1}+w\right )\,\mathrm{mod}\,p^MW(R_0(N)).$$

For any $\tau\in\Gamma _{\c F}$, let $a'_{\tau }(f)=\tau T'-T'\in\Z _p$, where 
$T'\in W(R_0(N))$ is such that $\sigma (T')-T'=f/([\varepsilon ]-1)$. 
Clearly, $\underset{s\to\infty }\lim \sigma ^s(fw)=0$ and this implies that 
$a'_{\tau }(f)\equiv a_{\tau }(f)\,\mathrm{mod}\,p^M$. 

Now one can proceed along the lines of the Main Lemma from \cite{Ab1} (or cf also 
\cite{Ab2}) to establish that 
$a'_{\tau }(f)\,\mathrm{mod}\,p^M=b_{\tau }(\theta (f))$.
\end{proof} 
\medskip 

\begin{Cor} \label{C5.5} 
 If $\alpha _0\in W(k)$ is such that $\Tr (\alpha _0)=1$ 
then  \newline 
{\rm a)} $\theta (\alpha _0H_0)$ is a $p^M$-primary element of $F$;
\newline 
{\rm b)} if $\phi $ is the Frobenius automorphism of the extension 
$F_{ur}/F$ then $\phi (\theta (\alpha _0H_0))=\zeta _M(\theta (\alpha _0H_0))$. 
\end{Cor}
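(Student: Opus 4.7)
The strategy is to extract both parts from Proposition~\ref{P5.4} applied to $f = \alpha_0 H_0$. With this choice, $f/H_0 = \alpha_0 \in W(k)$, so Artin--Schreier surjectivity on the algebraically closed field $\bar k$ produces $T \in W(\bar k) \subset W(R_0(N))$ with $\sigma(T) - T = \alpha_0$. Consequently, for every $\tau \in \Gamma_{\c F}$ the invariant $a_\tau(\alpha_0 H_0) = \tau(T) - T$ depends only on the image of $\tau$ in $\Gal(\bar k/k)$.

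The key computation is then explicit. Write $s=[k:\F_p]$; the arithmetic Frobenius $\phi_{\bar k}$ of $\bar k/k$ lifts to $\sigma^s$ on Witt vectors, and $\sigma^s$ fixes $W(k)$ pointwise. If $\tau|_{\bar k} = \phi_{\bar k}^n$, iterating $\sigma(T) = T + \alpha_0$ yields
\[
\tau(T) - T \;=\; \sigma^{sn}(T) - T \;=\; \sum_{i=0}^{sn-1}\sigma^i(\alpha_0) \;=\; n\sum_{j=0}^{s-1}\sigma^j(\alpha_0) \;=\; n\,\Tr(\alpha_0) \;=\; n,
\]
using $\Tr(\alpha_0) = 1$. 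By Proposition~\ref{P5.4}, $b_\tau(\theta(\alpha_0 H_0)) \equiv n \pmod{p^M}$; thus for any $\xi \in \bar F$ with $\xi^{p^M} = \theta(\alpha_0 H_0)$,
\[
\tau(\xi)/\xi \;=\; \zeta_M^n \quad \text{whenever } \tau \in \Gamma_{\c F} \text{ satisfies } \tau|_{\bar k} = \phi_{\bar k}^n.
\]

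For part (a), this identity shows the inertia of $\Gamma_{\c F}$ (the $\tau$ fixing $\bar k$) fixes $\xi$, while $n=1$ produces an automorphism of order exactly $p^M$; hence $F_\infty(\xi)/F_\infty$ is unramified of degree $p^M$, with residue field the degree-$p^M$ extension of $k$. Since the SDR hypothesis forces $F_\infty$ to have residue field $k$, the residue field of the compositum $F(\xi)\cdot F_\infty = F_\infty(\xi)$ coincides with that of $F(\xi)$, so $[k_{F(\xi)}:k]\geq p^M$; combined with $[F(\xi):F]\leq p^M$ from Kummer theory, this forces $F(\xi)/F$ to be unramified of degree $p^M$, i.e.\ $\theta(\alpha_0 H_0)$ is $p^M$-primary. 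Part (b) is then immediate: since $F^{ur}\cap F_\infty = F$, any lift of $\phi_{\bar k}$ to $\Gamma_F$ can be chosen inside $\Gamma_{\c F}$, yielding $\tau \in \Gamma_{\c F}$ with $\tau|_{F^{ur}}=\phi$; the formula above with $n=1$ reads $\phi(\xi)=\tau(\xi)=\zeta_M\,\xi$. The principal technical obstacle in this plan is the descent step in (a): one must know that the Kummer class $\theta(\alpha_0 H_0)\bmod F^{*p^M}$ actually lies in $F^*/F^{*p^M}$ (not merely in $\hat F_\infty^*/\hat F_\infty^{*p^M}$), which is automatic in the very special tower where $\gamma(t_i)=\pi_i$, and in the general SDR setting should follow from the Shafarevich-basis description of Subsection~\ref{S1.2}.
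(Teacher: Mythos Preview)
Your approach is essentially the same as the paper's: both reduce via Proposition~\ref{P5.4} to the Artin--Schreier equation $\sigma(T)-T=\alpha_0$ with $T\in W(\bar k)$, and then read off the Galois action from the Frobenius computation $\sigma^{s}(T)-T=\Tr(\alpha_0)=1$. Your telescoping sum makes explicit what the paper compresses into the phrases ``absolutely unramified of degree $p^M$ by Witt's explicit formula'' and ``$\phi(T)-T=\sigma^s(T)-T=\Tr(\alpha_0)=1$''. As for the descent concern you flag at the end, the paper handles it simply by asserting $\theta(\alpha_0 H_0)\in 1+\m_F$ at the outset; this is immediate in the very special tower (where $\gamma(t_i)=\pi_i$), which is exactly the setting in which this corollary is invoked.
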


\begin{proof} Indeed, $\theta (\alpha _0H_0)\in 1+\m _F$ and therefore  
 we can study $\Gamma _F$-properties of the extension 
$F(\root {p^M}\of {\theta (\alpha _0H_0)})$ by studying 
$\Gamma _{\c F}$-properties of the extension $\c F(T)$, where 
$T\in W_M(R_0(N))$ is such that $\sigma (T)-T=\alpha _0$. But 
this extension is absolutely unramified of degree $p^M$ 
by Witt's explicit formula from Section \ref{S2}). 
This proves part a). In order to prove b), it is sufficient to 
note that $\phi (T)-T=\sigma ^s(T)-T=\Tr (\alpha _0)=1$, where 
$[k:\F _p]=s$ and then to apply Proposition \ref{P5.4}. 
\end{proof}

\subsection{Proof of Theorem \ref{T0.2}} \label{S5.5}

Suppose $F_{\d }$ is an SDR $\omega $-admissible tower with 
parameters $(0,c)$, $F_0=F$, $\c F=X(F_{\d })$ and  
$\beta \in K_N(\c F)$. Then there is an $\tau\in\Gamma _F^{ab}$ such that 
(in the notation of Subsection \ref{S4}) 
$$\Theta _F^{\Phi }(\tau\,\mathrm{mod}\, p^M)=\c N_{\c F
/F}(\beta \,\mathrm{mod}\,p^M).$$ 
By Corollary \ref{C3.10}, there is an 
$\tilde\tau\in\Gamma _{\c F}^{ab}$ such that 
$\tau =\iota _{\c F/F}(\tilde\tau )$ and $\Theta _F^{\Phi }
(\tilde\tau )=\beta $, using the notation from the Introduction. 

For any $f\in\m ^0$, let 
$(\Theta (f), \c N_{\c F/F}(\beta ))^{F_{\d }}_M=
\zeta _{M+\omega }^{p^\omega A}$, where $A\in\Z /p^M$. 

We construct the corresponding $H_{\omega }\in\m ^0$, cf. Introduction 
and use Proposition \ref{P5.4} to deduce that:

--- if $U\in W(R_0(N))$ is such that $\sigma (U)-U=f/H_{\omega }$ 
then $$\tilde\tau U-U)\,\mathrm{mod}\,p^M=A.$$

Finally, by Proposition \ref{P2.5},
$$A=\Tr \left (\Res _{L(\c F)}
\frac{f}{H_{\omega }}d_{\log }(\mathrm{Col} \beta )\right ).$$
Theorem \ref {T0.2} is proved. 
\medskip 

\subsection{Relation to Vostokov's pairing}  

Suppose that $\zeta _M\in F$ and $F_{\d }^0=\{F_n^0\ |\  n\geqslant 0\}$
is a very special tower given in notation of Subsection \ref{S4.3} 
such that $F_0^0=F$. Recall that each $F_n^0$ has a system 
of local parameters $\pi _1^{(n)},\dots ,\pi _N^{(n)}$ 
such that for $1\leqslant i\leqslant N$, $\pi _i^{(0)}=\pi _i$ 
and $\pi _i^{(n+1)p}=\pi _i^{(n)}$. Then  
$F_{\d }^0$ is a $0$-admissible SDR tower. As earlier,  
$\c F=X(F_{\d }^0)$ with system of local parameters 
$\bar t_i=\underset{n}\varprojlim \pi _i^{(n)}$, where 
$1\leqslant i\leqslant N$, and $L(\c F)$ is the corresponding 
absolutely unramified lift of $\c F$ to characteristic 0 
with local parameters $p,t_1,\dots ,t_N$ such that 
$t_i\,\mathrm{mod}\,p=\bar t_i$, $1\leqslant i\leqslant N$. 
We have the following result. 

\begin{Thm} \label{Th1} For above SDR tower $F_{\d }^0$, the explicit  
formula for the $M$-th Hilbert symbol from 
Theorem \ref{T0.2} coincides with Vostokov's pairing. In other words, for 
very special towers the field-of-norms functor transforms  
Witt's pairing to Vostokov's pairing. 
\end{Thm}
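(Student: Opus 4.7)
The plan is to verify directly that, for the very special tower $F_{\d}^0$, the right-hand side of the formula in Theorem \ref{T0.2} literally reproduces Vostokov's differential-form expression from Subsection \ref{S3.3}, and then to extend this equality of formulas to an equality of pairings via the common factorization through the cyclic group $K_{N+1}(F)/p^M$. First I would exploit the defining property $\pi_i^{(n+1)p}=\pi_i^{(n)}$ of the very special tower: it yields $\gamma(t_i)=\lim_n\pi_i^{(n)p^n}=\pi_i^{(0)}=\pi_i=\kappa(t_i)$, so the continuous homomorphisms $\gamma$ and $\kappa$ agree on $\c R\cap L_0(\c F)$. In particular, for $f\in\m^0$ the element $u_0:=\theta^1(f)\in 1+\m^0$ is a Vostokov lift of $\theta(f)=\gamma(u_0)=\kappa(u_0)$; and taking $u_i:=t_i$ for $1\leqslant i\leqslant N$ gives $\{\kappa(u_1),\dots,\kappa(u_N)\}=\{\pi_1,\dots,\pi_N\}=\c N_{\c F/F}(\{\bar t_1,\dots,\bar t_N\})$ by the explicit description of $\c N_{\c F/F}$ for very special towers recorded in Subsection \ref{S4.3}.

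Next I would compute Vostokov's form $\Omega(u_0,u_1,\dots,u_N)$ for these specific choices. Since $\sigma t_i=t_i^p$ in $L(\c F)$, the auxiliary function $f_i=(1/p)\log(t_i^p/\sigma t_i)$ vanishes for every $i\geqslant 1$, and $\Omega$ collapses to $f_0\cdot d_{\log}t_1\wedge\cdots\wedge d_{\log}t_N$. The inverse formula for $\theta^1$ recorded in the Introduction then identifies $f_0=(1/p)\log(\theta^1(f)^p/\sigma\theta^1(f))=f$. Combining this with $\mathrm{Col}(\{\bar t_1,\dots,\bar t_N\})=\{t_1,\dots,t_N\}$ yields
$$\widetilde V(\theta^1(f),t_1,\dots,t_N)\equiv \Tr\Res\left(\frac{f}{H_0}\,d_{\log}\mathrm{Col}(\{\bar t_1,\dots,\bar t_N\})\right)\pmod{p^M},$$
which is precisely the exponent $A$ predicted by Theorem \ref{T0.2} applied to the pair $(\theta(f),\{\pi_1,\dots,\pi_N\})$. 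The two explicit formulas therefore coincide on this restricted family of pairs.

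To promote coincidence on this family to coincidence of pairings on all of $K_1(F)/p^M\times K_N(F)/p^M$, I would appeal to the fact that both Vostokov's pairing (Corollary \ref{C3.11}) and the Hilbert symbol computed by Theorem \ref{T0.2} factor through the cyclic group $K_{N+1}(F)/p^M$, which is generated by $\{\epsilon_0,\pi_1,\dots,\pi_N\}$. By Corollary \ref{C5.5}(a), the $p^M$-primary element $\epsilon_0=\theta(\alpha_0H_0)$ has the form $\theta(f)$ with $f=\alpha_0H_0$ and $\Tr(\alpha_0)=1$; so the computation of the preceding paragraph, applied with this $f$, gives exponent $\Tr\Res(\alpha_0\,d_{\log}t_1\wedge\cdots\wedge d_{\log}t_N)=\Tr(\alpha_0)=1$, matching Vostokov's value $V(\epsilon_0,\{\pi_1,\dots,\pi_N\})=1$ on this generator. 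The main technical point, and the conceptual heart of the argument, is the $\Omega$-collapse in the previous step: it hinges on the very special tower condition $\sigma(t_i)=t_i^p$ together with the inversion identity for $\theta^1$, and once this is in place the $K$-theoretic reduction is routine.
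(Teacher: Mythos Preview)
Your argument is correct, but it follows a genuinely different (and slicker) route from the paper's own proof. You reduce the comparison to a single generator of the cyclic group $K_{N+1}(F)/p^M$: you verify the two expressions agree on the family $(\theta(f),\{\pi_1,\dots,\pi_N\})$ --- which is easy because for $u_i=t_i$ one has $f_i=0$ for $i\geqslant 1$, collapsing Vostokov's $\Omega$ to its $i=0$ term --- and then invoke the common factorisation of both pairings through $K_{N+1}(F)/p^M$ (Corollary~\ref{C3.11} for Vostokov, the norm property for the Hilbert symbol, and Theorem~\ref{T0.2} identifying the field-of-norms formula with the Hilbert symbol). The paper instead performs a direct verification on \emph{all} standard generator pairs: it shows that Vostokov's higher terms $V_i$, $1\leqslant i\leqslant N$, vanish also in the case $b_2$ where the $K_N$-argument involves a factor $1+[\theta']\underline t^b$, which requires a residue computation using $H_0\in\sigma(\m^0)\bmod p^M$. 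Your approach is more economical and conceptually clean; the paper's approach is more elementary in that it establishes the literal equality of the two explicit expressions on every generator without appealing to the class-field-theoretic fact that the Hilbert symbol satisfies the Steinberg relation. Note also that your Step~2, while pleasant, is strictly stronger than you need: the single evaluation at $f=\alpha_0H_0$ already suffices once the $K_{N+1}$-factorisation is in hand.
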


\begin{proof} Note that the very special tower $F_{\d }^0$ 
 has the following advantages:

--- the map $\gamma :1+\m ^0\longrightarrow F$ is given by the 
correspondences $t_i\mapsto \pi _i$, $1\leqslant i\leqslant N$, and hence  
coincides with the evaluation map $\kappa $ from the beginning 
of section \ref{S3};

--- the Coleman map $\mathrm{Col} :K_N^t(\c F)\longrightarrow K_N^t(L(\c F))$ 
has a very simple explicit description in terms of the standard topological 
generators of the corresponding $K$-groups, cf. the beginning 
of Subsection \ref{S2.3}. 

It will be sufficient to verify the coincidence of the both explicit formulae  
on the standard topological generators of $F^*/p^M$ and $K_N(F)/p^M$ 
from Subsections \ref{S1.2} and \ref{S1.3}. It can be seen  
that on these generators (due to the above mentioned properties of 
very special towers) the formula from Theorem \ref{T0.2} 
coincides with the \lq\lq $i=0$\rq\rq -term of Vostokov's formula. 
In the notation of Section \ref{S3} we, therefore, need to verify that 
for $1\leqslant i\leqslant N$, the 
$i$-parts 
$$V_i:=\Tr (\Res H_0^{-1}f_i(\sigma /p)d_{\log}u_0\wedge\dots 
\wedge (\sigma /p)d_{\log}u_{i-1}\wedge d_{\log}u_{i+1}\wedge
\dots\wedge d_{\log}u_N)$$
of Vostokov's formula give a zero contribution on standard generators. 

The variable $u_0$ can take the following values:

$a_1$) $t_j$, where $1\leqslant j\leqslant N$;

$a_2$) $1+[\theta ]\underline{t}^a$, where $\theta\in k$ and 
$a=(a_1,\dots ,a_N)\in\Z ^N\setminus p\Z ^N$, $a>\bar 0$;

$a_3$) $\alpha _0H_0$ in the notation from Subsection \ref{S5.5}.

The symbol $\{u_1,\dots ,u_N\}$ can take the following values 
(the generators containing $\epsilon _0$ do not come from 
$K_N(\c F)$):

$b_1$) $\{t_1,\dots ,t_N\}$;

$b_2$) $\{1+[\theta ']\underline{t}^b,t_1,\dots ,t_{i(b)-1},t_{i(b)+1},
\dots ,t_N\}$, 
where $\theta '\in k$, 
$b=(b_1,\dots ,b_N)$ belongs to $\Z ^N\setminus p\Z ^N$, $b>\bar 0$, 
$b_N\equiv\dots b_{i(b)+1}\equiv 0\,\mathrm{mod}\,p$ and 
$b_{i(b)}\not\equiv 0\,\mathrm{mod}\,p$.

In the case $b_1$) $V_i=0$ for any $u_0$, because $f_i=0$. 

In the case $a_3$)  $V_i=0$ for any $\{u_1,\dots ,u_N\}$, because 
$(\sigma /p)d_{\log}H_0\in p^M\Omega _{O^0}$.

In the case $a_1b_2$) we can assume that $j=i(b)$ and $i=1$. 
Then the differential form from the expression of $V_1$ is a linear 
combination of the differential forms 
$H_0^{-1}\underline{t}^{ub} d_{\log}t_1\wedge\dots\wedge d_{\log}t_N$  
for $u\in\N$, $u\not\equiv 0\,\mathrm{mod}\,p$. All these 
differential forms have zero residue because 
$H_0\in\sigma(\m ^0)\mathrm{mod}p^M$.

Similarly, in the case $a_2b_2$) we can assume that $i=1$ and that the 
corresponding differential form is a linear combination of the forms 
$H_0^{-1}\underline{t}^{spa+ub}d_{\log}t_1\wedge\dots\wedge d_{\log}t_N$  
for $s,u\in\Z_{\geqslant 0}$ and $u\not\equiv 0\mathrm{mod}\,p$. 
All these forms have zero residue for the same reason. 

The Theorem is proved.
\end{proof}

\end{document}